\newtheorem{thm}{Theorem}[section]
\newtheorem{cor}[thm]{Corollary}
\newtheorem{prop}[thm]{Proposition}
\newtheorem{Lemma}[thm]{Lemma}
\newtheorem{Claim}[thm]{Claim}
\theoremstyle{definition}
\newtheorem{ex}[thm]{Example}
\newtheorem{Definition}[thm]{Definition}
\newtheorem{Remark}[thm]{Remark}
\numberwithin{equation}{section}
\newcommand{\C}{\mathbb{C}}
\newcommand{\R}{\mathbb{R}}
\newcommand{\Z}{\mathbb{Z}}
\newcommand{\EE}{\mathbb{E}}
\newcommand{\E}[1]{\mathbb{E}\left [ #1 \right ]}
\begin{document}

\allowdisplaybreaks

\newcommand{\arXivNumber}{2303.13812}

\renewcommand{\PaperNumber}{053}

\FirstPageHeading

\ShortArticleName{Rectangular Matrix Additions in Low and High Temperatures}

\ArticleName{Rectangular Matrix Additions\\ in Low and High Temperatures}

\Author{Jiaming XU}

\AuthorNameForHeading{J.~Xu}

\Address{Columbus, USA}
\Email{\mail{jxu0800@gmail.com}}

\ArticleDates{Received April 09, 2025, in final form May 06, 2026; Published online May 27, 2026}

\Abstract{We study the addition of two independent random $N\times M$ rectangular matrices with invariant distributions in two limiting regimes, where the parameter $\beta$ (inverse temperature) tends to infinity and~$0$. In the low temperature regime the random singular values of the sum concentrate at deterministic points, while in the high temperature regime, we obtain a law of large numbers for the empirical measures. As a consequence, we obtain a~duality between low and high temperatures. Our proof uses the type BC Bessel function as characteristic function of rectangular matrices, and through the analysis of this function we introduce a new family of cumulants, that linearize the addition in the high temperature limit, and degenerate to the classical and free cumulants in special cases.}

\Keywords{rectangular random matrices; $\beta$-ensemble; symmetric functions; Dunkl operators}

\Classification{05E05; 60B10; 60B20}

\section{Introduction}
\subsection{Overview}
Addition is one of the most natural operations on matrices. For deterministic matrices, a~classical question posed by Weyl~\cite{W} in 1912 concerns the eigenvalues $c_{1}\le \cdots \le c_{N}$ of $C = A + B$, where~$A$,~$B$ are two arbitrary self-adjoint $N\times N$ matrices with fixed real eigenvalues~$a_{1}\le \cdots \le a_{N}$ and $b_{1}\le \cdots \le b_{N}$. The problem is to describe all possible values of $c_{1}\le \cdots \le c_{N}$. This question was solved by the end of the 20th century through the combined efforts of Horn, Klyachko, Knutson--Tao, and others; see, for example, \cite{Ho, Kl,KT}.

In random matrix theory, one usually assumes that the summands $A$ and $B$ are random, independent, and satisfy certain symmetry conditions. The study of such questions has significant connections with free probability theory.

A well-known classical result connecting random matrix addition and free probability is due to Voiculescu~\cite{Vo}, who considered the sum of two independent real/complex/quaternionic self-adjoint matrices and related its asymptotic eigenvalue distribution, as the matrix size tends to infinity, to the notion of \emph{free convolution}. There is also a classical result of a similar flavor in the rectangular setting, stated as follows. Let $\{A_{N}\}_{N=1}^{\infty}$ and $\{B_{N}\}_{N=1}^{\infty}$ be two independent sequences of $N\times M$ matrices ($M\ge N$) with real/complex/quaternionic entries, each uniformly chosen from the set of rectangular matrices with prescribed singular values $a_{N,1}\ge\cdots\ge a_{N,N}\ge 0$ and~${b_{N,1}\ge\cdots\ge b_{N,N}\ge 0}$, respectively. Denote $C_{N}=A_{N}+B_{N}$, and let $c_{N,1}\ge\cdots\ge c_{N,N}\ge 0$ be its (random) singular values.

\begin{Definition}\label{def:empiricalmeas}
 For an $N\times M$ ($M\ge N$) matrix $A$ with singular values $a_{1},\dots,a_{N}\ge 0$, define its (symmetric) empirical measure to be
 $\mu_{A}:=\frac{1}{2N}\sum_{i=1}^{N}(\delta_{a_{i}}+\delta_{-a_{i}})$.
\end{Definition}

\begin{thm}[{\cite[Proposition~2.1]{B1}}]\label{thm:recfreeconvolution}
Define $\{A_{N}\}_{N=1}^{\infty}$, $\{B_{N}\}_{N=1}^{\infty}$ as above. Assume that $N,M\rightarrow \infty$ in a way that $M(N)/N\rightarrow q$ for some constant\footnote{$q=\lambda^{-1}$ in the notation of~\cite{B1} and the other references on the rectangular free convolution. } $q\ge 1$, and there exist deterministic probability measures $\mu_{A}$, $\mu_{B}$ on $\R$, such that
\[
\lim_{N\rightarrow \infty}\mu_{A_{N}}=\mu_{A},\qquad \lim_{N\rightarrow \infty}\mu_{B_{N}}=\mu_{B}.
\]
Then the random empirical measure of $C_{N}=A_{N}+B_{N}$, $\mu_{C_{N}}=\frac{1}{2N}\sum_{i=1}^{N}(\delta_{c_{N,i}}+\delta_{-c_{N,i}})$, converges weakly in probability to some deterministic probability measure $\mu_{C}$ on $\R$.

$\mu_{C}=\mu_{A}\boxplus_{q} \mu_{B}$ is called the \emph{rectangular free convolution} of $\mu_{A}$ and $\mu_{B}$.
\end{thm}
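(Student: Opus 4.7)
The plan is to prove convergence via the type BC multivariate Bessel transform, which plays the role of a characteristic function for bi-invariant rectangular distributions. Since $A_{M}$ is uniformly distributed on its $U(M)\times U(N)$ orbit (with the analogous statements in the real orthogonal and compact symplectic cases), its law is encoded by the singular values $a_{M,1},\dots,a_{M,M}$, and the natural characteristic function is the BC Bessel function $\mathcal{B}^{(M,N,\beta)}(z_{1},\dots,z_{M};a_{M,1},\dots,a_{M,M})$, the spherical function of the corresponding Gelfand pair. The key structural fact is multiplicativity: because $A_{M}$ and $B_{M}$ are independent and both bi-invariant, the Bessel transform of the singular values of $C_{M}$ factorizes as the product of those of $A_{M}$ and $B_{M}$.

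First I would establish an $M\to\infty$ asymptotic expansion of the Bessel function in an appropriate double-scaling regime: fix a finite number of variables $z_{1},\dots,z_{k}$, set the remaining arguments to zero, and rescale $z_{j}\mapsto z_{j}/\sqrt{M}$ at the natural variance scale. Under the hypotheses $\mu_{A_{M}}\to\mu_{A}$ and $N/M\to q$, I would prove
\[
\frac{1}{M}\log\mathcal{B}^{(M,N,\beta)}(z_{1}/\sqrt{M},\dots,z_{k}/\sqrt{M},0,\dots,0;a_{M,1},\dots,a_{M,M})\;\longrightarrow\;\Phi^{\mu_{A}}_{q}(z_{1},\dots,z_{k}),
\]
where $\Phi^{\mu}_{q}$ depends only on $\mu$ and $q$ (and, consistent with the $\beta$-independence of the classical limit, not on $\beta$ at leading order). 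Multiplicativity then forces the additive identity $\Phi^{\mu_{C}}_{q}=\Phi^{\mu_{A}}_{q}+\Phi^{\mu_{B}}_{q}$ in the limit.

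Next, I would argue that $\Phi^{\mu}_{q}$ determines $\mu$: this is essentially an inverse-transform argument obtained by reading off moments from the Taylor coefficients of $\Phi$ at the origin, or by appealing to uniqueness of the rectangular $R$-transform. There is then a unique probability measure $\mu_{C}$ satisfying the additive relation, which one identifies with the rectangular free convolution $\mu_{A}\boxplus_{q}\mu_{B}$. To upgrade convergence of the characteristic functional to convergence in probability of $\mu_{C_{M}}$, I would use a Hoffman--Wielandt-type concentration bound to control the fluctuations of $\mu_{C_{M}}$ in Wasserstein distance; bi-invariance ensures that the deviations from the mean are governed by Haar-random perturbations of the summands, which are small on the relevant scale.

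The principal difficulty lies in the asymptotic analysis of the BC Bessel function itself. Its natural representations---contour integrals of Harish-Chandra type, or BC-hypergeometric and Jack-polynomial expansions---are not immediately amenable to a leading-order double-scaling expansion, and one must extract $\Phi^{\mu}_{q}$ uniformly in $\beta\in\{1,2,4\}$ and in $q\ge 1$. The saddle-point or combinatorial work underlying this extraction is, in my view, the main technical obstacle; once it is available, both the factorization identity and the concentration step follow from comparatively standard arguments.
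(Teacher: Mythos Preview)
The paper does not prove this theorem at all: Theorem~\ref{thm:recfreeconvolution} is stated with attribution to \cite[Proposition 2.1]{B1} and serves purely as background for the introduction. There is no ``paper's own proof'' to compare against. The original argument in \cite{B1} proceeds through operator-valued free probability and moment computations in the $C^{*}$-algebraic framework of rectangular random matrices, not through Bessel functions.

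That said, your outline is close in spirit to \cite{B2}, which derives the rectangular $R$-transform as the scaling limit of rectangular spherical integrals, and also resonates with the machinery this paper builds for the high- and low-temperature regimes (Theorem~\ref{thm:hightemperaturemainthm}). Two remarks. First, the paper explicitly leaves the fixed-temperature case $M,N\to\infty$, $\theta$ fixed, $N/M\to q$ as an open problem (see the discussion following Theorem~\ref{thm:recfreeconvolution} and Section~1.6), precisely because the asymptotic analysis of the type BC Bessel function you identify as the ``principal difficulty'' has not been carried out in that regime within this framework. Second, your proposed scaling $z_{j}\mapsto z_{j}/\sqrt{M}$ with finitely many nonzero arguments, and the extraction of $\Phi^{\mu}_{q}$ from $\tfrac{1}{M}\log\mathcal{B}$, is the delicate step: in \cite{B2} this relies on concrete matrix structure ($\beta=1,2$), and the paper notes (Remark after Theorem~\ref{thm:hightemperaturemainthm}) that the analogous statement in fixed temperature uses methods unavailable for general $\beta$. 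So your plan is reasonable as a route to \emph{re}-prove \cite{B1} for $\beta=1,2$, but it is not what the paper does, and extending it to all $\beta>0$ is genuinely open.
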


The rectangular free convolution is a deterministic binary operation of symmetric\footnote{$\mu(A)=\mu(-A)$ for all Borel subsets $A$ of $\R$.} probability measures on $\R$, which itself does not rely on any random matrix structure, and it has been well-studied in free probability theory from different aspects. In particular, for each measure~$\mu$ with finite moments, there exists a collection of \emph{rectangular free cumulants} $\{c^{q}_{l}\}_{l=1}^{\infty}$ (see \cite[Section~3.1]{B1}) that are polynomials of moments with explicit expressions, and these quantities linearize free convolution, i.e., $c^{q}_{l}(\mu_{A}\boxplus \mu_{B})=c^{q}_{l}(\mu_{A})+c^{q}_{l}(\mu_{B})$ for all $l$. It turns out that the existence of such cumulants is a common feature of the various versions of convolutions in free probability theory, and each convolution is characterized by its corresponding cumulants.

There have also been many papers studying additions of $\beta$-random matrices that generalize the above theory in different parameter regimes. The parameter $\beta>0$ is interpreted in physics as inverse temperature, and the cases $\beta=1,2,4$ correspond to matrices with real/complex/real quaternionic entries. There are two classes of matrix ensembles, the $N\times N$ self-adjoint matrices and the $N\times M$ rectangular matrices, and the most classical examples are Gaussian ensembles and Laguerre ensembles, respectively. For the first class, \cite{GM}~studied the limit behavior of eigenvalues of $C=A+B$ when $N$ is fixed and $\beta\rightarrow\infty$. In the physics literature, \cite{MeP}~considered the limit behavior of the additions
when $\beta\to0$, and shortly thereafter~\cite{BCG} proved a law of large numbers similar to Theorem~\ref{thm:recfreeconvolution} in the regime $N\rightarrow\infty$, $\beta\rightarrow0$, $N\beta/2\rightarrow \gamma>0$. In another direction, \cite{AP,GM,MSS} developed the theory of convolution and cumulants for additions of finite dimensional $\beta$-random matrices, which is known as \emph{finite free probability}. The second class is relatively less understood. Papers \cite{B1, B2} studied the rectangular matrix additions for $\beta=1,2$, $N,M\rightarrow\infty$ and $M/N\rightarrow q\ge 1$, and~\cite{Gri,GrM} studied the finite free convolution and cumulants for rectangular matrix additions for $\beta=1,2$. After the first version of this paper was released, the law of large numbers for $\beta$-matrix additions with fixed $\beta>0$ was proved independently in~\cite{Y} and~\cite{CX}; the former treated both the self-adjoint and rectangular settings.

The matrix ensemble considered in this paper belongs to the second class, and we study the limiting behavior of singular values of $C=A+B$ in both low and high temperature regimes, more precisely, when $N$, $M$ are fixed, $\beta\rightarrow\infty$, and when $ N,M\rightarrow\infty$, $\beta\rightarrow0$, $N\beta/2\rightarrow \gamma>0$, $M\beta/2\rightarrow q\gamma$ for some $q\ge 1$. Note that even defining the operation $C=A+B$ for $\beta\ne 1,2,4$ is nontrivial, and this is one of our tasks.

Our approach is based on distributions of rectangular matrices in a version of characteristic function. The symmetry of self-adjoint/rectangular matrices with fixed eigenvalues/singular values is given by invariance under actions of classical Lie groups ${\rm O}(N)/{\rm U}(N)/{\rm Sp}(N)$, and when $\beta=1,2,4$, the matrix characteristic functions have matrix integral representations with representation-theoretic background. Such functions admit an analytic continuation to all ${(\beta>0)}$, and can be identified as eigenfunctions of certain differential operators. See the following papers~\cite{BCG, BoG, GM, GS} for applications of this idea in random matrices, and also~\cite{BG1,BG2,H} for the study of more general $N$-particle system using symmetric characteristic functions of similar flavor. While the above works deal with self-adjoint matrices, or more generally $N$-particle systems that correspond to a root system of type A with a single root multiplicity~${\theta=\beta/2}$, rectangular matrices correspond to a root system of type BC, that has two distinct root multiplicities parameterized by $\beta$ in a more involved way. For more connections of type BC Lie-theoretic objects with probability, see, e.g., \cite{KVW, V, VW}.

In this paper, the randomness of an $N$-tuple of nonnegative real numbers (to be thought of as the singular values of some $N\times M$ random matrices) is encoded by a multivariate symmetric function, known in the special functions literature as the type BC Bessel function. It is also a special case of the symmetric Dunkl kernel, which generalizes the usual Fourier kernel to nontrivial root multiplicities; see~\cite{A} for a review. Motivated by the asymptotic behavior in the high-temperature regime, we adopt and further develop the philosophy that the limits of the partial derivatives at $0$ of the logarithm of our characteristic function yield a collection of cumulants, and that the existence of such cumulants is equivalent to the existence of limiting moments. This, in turn, implies that the empirical measures of the random singular values satisfy a~law of large numbers. These new $q$-$\gamma$ cumulants are designed to linearize rectangular addition in the regime $N\beta/2\to \gamma$, $M\beta/2\to q\gamma$, and the corresponding limiting operation is called $q$-$\gamma$ convolution. Similar to classical and free cumulants, they enjoy pleasant combinatorial relations with moments. Finally, we point out a surprising identification of $q$-$\gamma$ theory with rectangular finite free probability, which was developed in~\cite{Gri, GrM,MSS} in the study of finite rectangular matrix additions.

\subsection{Rectangular matrix addition}
Throughout the paper, we always take\footnote{The parameter $\beta$ is standard in the random matrix context, while $\theta$ is more commonly used in the special function literature. We follow this convention and mostly use $\theta$, except for the random matrix terminologies like ``$\beta$-ensembles".} $\beta=2\theta>0$, and $\beta=1, 2, 4 \ \big(\theta=\frac{1}{2},1,2$\big) correspond to the real, complex and quaternionic (skew) fields (whose real dimensions are given by $\beta$). For~${M\ge N}$, given two $N\times M$ independent random matrices $A$ and $B$, we study the randomness of the sum~${C=A+B}$.

Inspired by the classical theory of adding independent random variables $X+Y$, namely, that the characteristic function satisfies
$\phi_{X+Y}(t)=\phi_{X}(t)\cdot\phi_{Y}(t)$,
where $t\in \R$ is the variable, we have the following.

\begin{prop}\label{prop:characteristic1}
For $\theta=\frac{1}{2},1,2$, let $A$ and $B$ be $N\times M$ independent random rectangular matrices, let $X$ be $M\times N$ an arbitrary deterministic matrix with real/complex/real quaternionic entries, and let $C=A+B$. We have
\begin{equation}\label{eq_matrixfouriertransform}
\EE [\exp(\operatorname{Re}(\operatorname{Tr}(CX)))]=\EE[\exp(\operatorname{Re}(\operatorname{Tr}(AX)))]\cdot\EE[\exp(\operatorname{Re}(\operatorname{Tr}(BX)))].
\end{equation}
\end{prop}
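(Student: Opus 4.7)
The plan is to reduce the identity to the scalar analogue, which asserts that the characteristic function of a sum of independent real random variables factors. The crucial structural fact is that the map $A \mapsto \mathrm{Re}(\mathrm{Tr}(AZ))$ is $\R$-linear in $A$, regardless of whether the entries lie in $\R$, $\C$ or the real quaternions $\mathbb{H}$, because matrix multiplication distributes over addition and $\mathrm{Tr}$ is linear; taking the real part of a quaternion (or complex number) is also $\R$-linear. Hence
\[
\mathrm{Re}(\mathrm{Tr}(CZ)) = \mathrm{Re}(\mathrm{Tr}((A+B)Z)) = \mathrm{Re}(\mathrm{Tr}(AZ)) + \mathrm{Re}(\mathrm{Tr}(BZ)).
\]
Exponentiating turns the sum into a product, so
\[
\exp\bigl(\mathrm{Re}(\mathrm{Tr}(CZ))\bigr) = \exp\bigl(\mathrm{Re}(\mathrm{Tr}(AZ))\bigr) \cdot \exp\bigl(\mathrm{Re}(\mathrm{Tr}(BZ))\bigr).
\]

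Once this pointwise identity is in hand, I would take expectations on both sides and use the independence of $A$ and $B$. Specifically, since $f(A) := \exp(\mathrm{Re}(\mathrm{Tr}(AZ)))$ and $g(B) := \exp(\mathrm{Re}(\mathrm{Tr}(BZ)))$ are (bounded by, say, Borel-measurable functions of) $A$ and $B$ separately, the factorization property of expectations under independence gives
\[
\EE[f(A) g(B)] = \EE[f(A)] \cdot \EE[g(B)],
\]
which is exactly \eqref{eq_matrixfouriertransform}.

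There is essentially no obstacle beyond a small bookkeeping check in the quaternionic case $\theta = 2$: one has to verify that $\mathrm{Tr}$ on $M_{M\times M}(\mathbb{H})$ is well-defined and that $\mathrm{Re}\circ \mathrm{Tr}$ is genuinely $\R$-linear. Both are immediate from writing a quaternion as $x_0 + x_1 i + x_2 j + x_3 k$ with $x_\ell \in \R$ and observing that $\mathrm{Re}$ picks off $x_0$. One should also note that integrability of $\exp(\mathrm{Re}(\mathrm{Tr}(\cdot Z)))$ is implicitly assumed here in the same sense as classical characteristic functions; the identity is an algebraic equality of expectations whenever both sides are defined. No deeper tools (Lie-theoretic or harmonic-analytic) are needed, which is why the statement serves only as motivation for the more interesting characteristic function built from the type BC Bessel kernel in the subsequent sections.
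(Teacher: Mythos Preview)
Your proof is correct and follows exactly the same approach as the paper: linearity of $A\mapsto \mathrm{Re}(\mathrm{Tr}(AZ))$ gives $\mathrm{Re}(\mathrm{Tr}(CZ))=\mathrm{Re}(\mathrm{Tr}(AZ))+\mathrm{Re}(\mathrm{Tr}(BZ))$, and independence of $A$ and $B$ makes the expectation of the product factor. The paper's proof is a one-line version of what you wrote; your additional remarks on the quaternionic case and integrability are sound but not needed for the argument.
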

\begin{proof}
 $\operatorname{Re}(\operatorname{Tr}(CX))=\operatorname{Re}(\operatorname{Tr}(AX))+\operatorname{Re}(\operatorname{Tr}(BX)),$ and since $A$, $B$ are independent, the expectation of the exponential function factors.
\end{proof}

Let us now rewrite~\eqref{eq_matrixfouriertransform} in terms of singular values of $A$, $B$ and $C$, and for simplicity first take $\theta =1$, i.e., we deal with complex matrices. In this paper, we are considering the summands~$A$ and $B$ that have distributions invariant under left and right unitary actions, i.e.,
\begin{equation}\label{eq_invariance}
A\stackrel{d}{=} UAV, \qquad B\stackrel{d}{=} UBV,
\end{equation}
where $U\in {\rm U}(N)$, $V\in {\rm U}(M)$ are arbitrary unitary matrices. One example is a real/complex/real quaternionic $N\times M$ matrix with i.i.d.\ mean 0 Gaussian entries.

 Note that if $A$, $B$ satisfy~\eqref{eq_invariance}, so does $C$. For simplicity, in the following discussion we usually focus on the matrix $A$. By singular value decomposition, it is useful to write $A$ as $U\Lambda V$, where
 \begin{gather*}
 \Lambda=\begin{bmatrix}
a_{1} & & & &&0&\dots &0\\
 & a_{2} & & &&0&\dots & 0\\
 & &\dots & &&\\
 &&&\dots &\\
 &&&&a_{N} &0&\dots &0
\end{bmatrix}_{N\times M},
\end{gather*}
$\Vec{a}=(a_{1},\dots,a_{N})\in \R_{\ge 0}$, and $U\in {\rm U}(N)$, $V\in {\rm U}(M)$ are random elements under Haar measures on the corresponding unitary groups. For now, assume that the singular values of $A$ are deterministic. One can consider rectangular matrices $A$ with real/real quaternionic entries and define invariant distribution in exactly the same way, while replacing the Haar distributed~$U$,~$V$ by elements in orthogonal/unitary symplectic groups ${\rm O}(N)/{\rm Sp}(N)$, ${\rm O}(M)/{\rm Sp}(M)$. Similarly for~$B$.

The eigenvectors of $AA^{*}$, $A^{*}A$, $BB^{*}$, $B^{*}B$ are distributed uniformly, and so are the eigenvectors of $CC^{*}$, $C^{*}C$. Therefore, the nontrivial randomness of $C$ is about its singular values. Also, by the singular value decomposition in~\eqref{eq_invariance}, it suffices to take the parameter matrix $X$ of the form \[
 X=\begin{bmatrix}
x_{1} & & & &\\
 & x_{2} & & &\\
 & &\dots &\\
 &&&\dots &\\
 &&&&x_{N} &\\
 0&&\dots &&0\\
 && \dots &&\\
 0&&\dots &&0
\end{bmatrix}_{M\times N},\]
 where $x_{1},\dots,x_{N}\in \R$. Therefore, we can rewrite the matrix Fourier transform of $A$ in~\eqref{eq_matrixfouriertransform} as a function $B(\vec{a},x_{1},\dots,x_{N};\theta,N,M)$ for $\theta=1$, such that
\begin{equation}\label{eq_matrixint}
 B(\vec{a},x_1,x_2,\dots,x_N;1, N,M)=\int {\rm d}U\int {\rm d}V \exp\left(\frac{1}{2}\operatorname{Tr}(U\Lambda VX+X^{*}V^{*}\Lambda^{*}U^{*})\right),
\end{equation}
where $U\in {\rm U}(N)$, $V\in {\rm U}(M)$ are integrated under Haar measures respectively. A similar argument defines $B(\vec{a},x_1,x_2,\dots,x_N;\theta, N,M)$ for $\theta=\frac{1}{2}$ and $2$, where one replaces the unitary group by the orthogonal/unitary symplectic group, respectively.

The function $B(\vec{a},x_{1},\dots,x_{N};\theta,N,M)$ is known as multivariate type BC Bessel function in a general theoretical framework of special functions. The note~\cite{Xu} gives a brief outline of the framework, while in this paper, we
give a more concrete definition of $B(\vec{a},x_{1},\dots,x_{N};\theta,N,M)$, and state its properties needed later in
Sections~\ref{sec:typebc} and \ref{sec:dunkl}.

We note that because of the symmetry of Haar measure, $B(\vec{a},x_{1},\dots,x_{N};\theta,N,M)$ is symmetric in both $(a_{1},\dots,a_{N})$ and $(x_{1},\dots,x_{N})$, and without loss of generality we can always take~${a_{1}\ge a_{2}\ge \dots \ge a_{N}}$. The same holds for matrix $B$ and $C$.
Then Proposition~\ref{prop:characteristic1} is rewritten as the following result.
\begin{prop}\label{prop:characteristic2}
For $\theta=\frac{1}{2},1,2$, fix $a_{1}\ge\dots \ge a_{N}\ge 0$, $b_{1}\ge\dots \ge b_{N}\ge 0$, let
$A_{N\times M}$ and $B_{N\times M}$ be real/complex/real quaternionic rectangular matrices with deterministic singular values $\{a_{i}\}_{i=1}^{N}$, $\{b_{i}\}_{i=1}^{N}$ and invariant distribution, as in {\rm\eqref{eq_invariance}}. Let the singular values of~${C=A+B}$ be $\Vec{c}=(c_{1}\ge \dots \ge c_{N}\ge 0)$,
then as a function on $\R^{N}$, the expected value of~${B(\Vec{c};x_{1},\dots,x_{N};\theta,N,M)}$ is given by
\begin{gather*}
\E{B(\Vec{c};x_{1},\dots,x_{N};\theta,N,M)}=
B(\vec{a},x_{1},\dots,x_{N};\theta,N,M)\cdot B\big(\Vec{b};x_{1},\dots,x_{N};\theta,N,M\big).
\end{gather*}
\end{prop}

For general $\beta>0$, there is no (skew) field of real dimension $\beta$, and therefore no concrete $\beta$-rectangular matrices. Motivated by Proposition~\ref{prop:characteristic2}, we first identify an invariant $N\times M$ matrix with uniform ``singular vectors'' and deterministic singular values $a_{1},\dots,a_{N}$ with the $N$-tuple $\vec{a}$. Moreover, it is known (see, e.g., \cite[Section~13.4.3]{Forrester} and~\cite{GT}) that multivariate Bessel functions admit a natural extrapolation from $\theta=\tfrac12,1,2$ to arbitrary real $\theta>0$, and we continue to denote the resulting function by
$
B(\vec{a},x_{1},\dots,x_{N};\theta,N,M)$.
In this way, $N\times M$ random matrix addition is extended to all $\theta>0$ by generalizing Proposition~\ref{prop:characteristic2}.
\begin{Definition}\label{def:deterministicaddition}
Fix $\theta>0$, $M\ge N$, $\Vec{a}=(a_{1}\ge\dots \ge a_{N}\ge 0)$, $\Vec{b}=(b_{1}\ge\dots \ge b_{N}\ge 0)$. Let $\Vec{c}$ be a symmetric random vector in $\R_{\ge 0}^{N}$, such that as a function on $\R^{N}$,
\begin{gather}\label{eq_additioindef}
\EE[B(\Vec{c};x_{1},\dots,x_{N};\theta,N,M)]=
B(\vec{a},x_{1},\dots,x_{N};\theta,N,M)\cdot B\big(\Vec{b};x_{1},\dots,x_{N};\theta,N,M\big).
\end{gather}
We write \smash{$
\Vec{c}=\Vec{a}\boxplus_{N,M}^{\theta}\Vec{b}$}.
\end{Definition}

From the probabilistic point of view, $\vec{c}$ is identified with the singular values of the “virtual” random $N\times M$ matrix $C = A\boxplus^{\theta}_{N,M} B$, and $B(\vec{c};x_{1},\dots,x_{N};\theta,N,M)$ serves as the characteristic function of $C$. From the analytic point of view, the operation in~\eqref{eq_additioindef} has been studied previously in the context of the Dunkl kernel and Dunkl translation; see \cite[Section~3.6]{A}.

The expectation symbol on the left-hand side of~\eqref{eq_additioindef} is understood in the following sense: there exists a unique generalized function\footnote{Throughout this paper, we use the term “generalized function” instead of “distribution” to denote a linear functional on smooth functions, in order to avoid confusion with probability distributions.} $\mathfrak m$ on \smash{$\mathbb R_{\ge 0}^{N}$}, depending on $\vec{a}$ and $\vec{b}$, such that for any~${(x_{1},\dots,x_{N})\in\mathbb R^{N}}$, testing on $B(\cdot;x_{1},\dots,x_{N};\theta,N,M)$ yields the right-hand side of~\eqref{eq_additioindef}, and in particular, taking $x_{1}=\cdots=x_{N}=0$ gives $\mathfrak m(1)=1$.

Note that $\mathfrak m$ is symmetric in the sense that, for any suitable test function $f$ and any permutation $\sigma$, $ \langle \mathfrak m, f(c_{1},\dots,c_{N})\rangle
=\langle \mathfrak m, f(c_{\sigma(1)},\dots,c_{\sigma(N)})\rangle$,
where $\langle \mathfrak m, f\rangle$ denotes the value of the functional $\mathfrak m$ on $f$. Moreover, by \cite[Lemma~3.23]{A}, the generalized function $\mathfrak m$ is compactly supported.\looseness=-1

The rectangular addition \smash{$\Vec{a}\boxplus_{N,M}^{\theta}\Vec{b}$} can also be naturally generalized to independent random $N$-tuples $\Vec{a}$, $\Vec{b}$, by first conditioning on the value of $\vec{a}$ and $ \vec{b}$, then applying Definition~\ref{def:deterministicaddition}. Formally, for random $N$-tuple $\Vec{a}$ we replace the type BC Bessel function by
\begin{equation}\label{eq_bgf0}
G_{N;\theta}(x_{1},\dots,x_{N}):=\EE [B(\Vec{a},x_{1},\dots,x_{N};\theta,N,M)],
\end{equation}
the type BC Bessel generating function of $\Vec{a}$, and we assume the randomness of $\Vec{a}$ to be reasonable, in the sense that the right side of~\eqref{eq_bgf0} is finite and well-behaved as an analytic function of~${(x_{1},\dots,x_{N})\in \R_{N}}$. See Section~\ref{sec:bgf} for more details.

\subsection{Low and high temperature behavior}
By viewing \smash{$\Vec{c}=\Vec{a}\boxplus_{N,M}^{\theta}\Vec{b}$} as the random $N$-tuple of singular values of some $N\times M$ virtual rectangular matrix with invariant distribution, it is then natural to study the behavior of $\Vec{c}$ from a random matrix point of view. The distribution of $\Vec{c}$ depends on summands $\Vec{a}$, $\Vec{b}$ and parameters~${M\ge N}$, $\theta>0$. This paper answers the following two questions:
\begin{itemize}\itemsep=0pt
\item[(1)] What is the ``low temperature'' behavior of $\Vec{c}$, i.e., when taking $N$,~$M$ to be fixed, and $\theta\rightarrow\infty$?
\item[(2)] What is the ``high temperature'' behavior of $\Vec{c}$, i.e., when taking $\theta\rightarrow 0$, and $N,M\rightarrow \infty$, growing at potentially different speeds?
\end{itemize}
The solutions of these two questions can be found in Sections~\ref{sec:lowtemp} and~\ref{sec:lln}, \ref{sec:momentcumulant}, respectively. We remark that for $\theta\ne \frac{1}{2},1,2$, it is not yet verified that the generalized function under $\Vec{c}$ is a probability measure, see, e.g., \cite[Section~3.5]{A}.
We do not rely on the validity of this statement, and instead analyze moments of the distribution of $\Vec{c}$, which can be defined no matter the positivity conjecture holds or not. See Proposition~\ref{prop:polyexpectation} for the precise statement.

In the low temperature regime, we observe that the random $N$-tuple are becoming ``frozen'' at some deterministic positions. More precisely we have the following statement.
Let $1\le N\le M$, $z$ be a formal variable, $(a_{1},\dots,a_{N}), (b_{1},\dots,b_{N})\in \R^{N}_{\ge 0}$, we define a polynomial $P_{N,M}(z)$ by
\begin{align}
P_{N,M}(z)={}&\sum_{l=0}^{N}(-1)^{l}\Biggl(\sum_{i\ge 0,\, j\ge 0,\, i+j=l}\frac{(N-i)!(N-j)!}{N!(N-l)!}\nonumber\\
&\hphantom{\sum_{l=0}^{N}(-1)^{l}\Biggl(}{} \times\frac{(M-i)!(M-j)!}{M!(M-l)!}
e_{i}\big(a_{1}^{2},\dots,a_{N}^{2}\big)e_{j}\big(b_{1}^{2},\dots,b_{N}^{2}\big)\Biggr)z^{N-l}.\label{eq_charpoly}
\end{align}

\begin{thm}\label{thm:lln}
Fix $M\ge N$, given $\Vec{a}$ and $\Vec{b}$, let \smash{$\Vec{c}=\Vec{a}\boxplus_{N,M}^{\theta}\Vec{b}$}. Then as
 $\theta\rightarrow \infty$, the distribution of \smash{$\Vec{c^{2}}=\bigl(c_{1}^{2},\dots,c_{N}^{2}\bigr)$} converges on polynomial test functions to $\delta$-measures on roots of $P_{N,M}(z)$.
\end{thm}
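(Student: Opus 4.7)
The plan is to extract $\EE\bigl[e_l(\vec c^{\,2})\bigr]$ from the defining equation (\ref{eq_additioindef}) by comparing Taylor coefficients at $\vec z=0$, then take $\theta\to\infty$ and match the limit against the elementary symmetric polynomials of the roots of $P_{M,N}(z)$ (note that $P_{M,N}$ is monic of degree $M$, from the $l=0$ term of (\ref{eq_charpoly})).

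First I would expand $\B(\vec a;\vec z;\theta,N)$ in the monomial basis of $(z_1^2,\dots,z_M^2)$, using the type BC Jack / Heckman--Opdam series referenced in Section 2.3 and Appendix A:
\begin{equation*}
\B(\vec a;\vec z;\theta,N)=\sum_\lambda C_\lambda(\theta,N)\,J_\lambda^{\theta,N}(a_1^2,\dots,a_M^2)\,m_\lambda(z_1^2,\dots,z_M^2),
\end{equation*}
where $\lambda$ runs over partitions of length at most $M$, $J_\lambda^{\theta,N}$ is a suitably normalized type BC Jack polynomial (in particular $J_{(1^l)}^{\theta,N}$ is a scalar multiple of $e_l$, the only symmetric polynomial of that shape), and $C_\lambda(\theta,N)$ is a product of Pochhammer symbols in $\theta$ and $N$. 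Absorbing the scalar into the coefficient, the coefficient of $m_{(1^l)}(\vec z^{\,2})=e_l(\vec z^{\,2})$ is then of the form $\kappa_l(\theta,N)\,e_l(\vec a^{\,2})$ for an explicit $\kappa_l(\theta,N)$.

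Multiplying the expansions for $\vec a$ and $\vec b$ and reading off the coefficient of $e_l(\vec z^{\,2})$, and doing the same for $\EE[\B(\vec c;\vec z;\theta,N)]$ on the left, yields
\begin{equation*}
\kappa_l(\theta,N)\,\EE\bigl[e_l(\vec c^{\,2})\bigr]=\sum_{i+j=l}\binom{l}{i}\,\kappa_i(\theta,N)\,\kappa_j(\theta,N)\,e_i(\vec a^{\,2})\,e_j(\vec b^{\,2})+R_l(\theta,N),
\end{equation*}
where $\binom{l}{i}$ is the coefficient of $e_l$ in $e_i e_j$, and $R_l$ collects the contributions of all pairs of partitions $(\lambda,\mu)$ with $|\lambda|+|\mu|=l$ other than $((1^i),(1^j))$. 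The crux is the asymptotic analysis: I would compute $\kappa_k(\theta,N)$ explicitly and verify that as $\theta\to\infty$,
\begin{equation*}
\binom{l}{i}\,\frac{\kappa_i(\theta,N)\,\kappa_j(\theta,N)}{\kappa_l(\theta,N)}\;\longrightarrow\;\frac{(M-i)!(M-j)!}{M!(M-l)!}\cdot\frac{(N-i)!(N-j)!}{N!(N-l)!},
\end{equation*}
while $R_l/\kappa_l\to 0$ because Jack polynomials $J_\mu^{\theta,N}$ indexed by partitions $\mu$ with some part $\ge 2$ have strictly lower-order growth in $\theta$ than products of $J_{(1^k)}^{\theta,N}$ of the same total degree. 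This Jack-polynomial asymptotics step is the main obstacle: obtaining sharp enough control on $\kappa_l(\theta,N)$ and on the leading-in-$\theta$ behavior of $J_\lambda^{\theta,N}$, uniformly in $\lambda$, is what makes the term-by-term passage to the limit in the a priori infinite series rigorous; compact support of $\mathfrak m$ from Lemma 3.23 of \cite{A} should supply the necessary uniformity.

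Combining these steps gives $\lim_{\theta\to\infty}\EE\bigl[e_l(\vec c^{\,2})\bigr]$ equal to the right-hand side of the displayed limit, which by (\ref{eq_charpoly}) and Vieta is the $l$-th elementary symmetric polynomial of the roots of $P_{M,N}(z)$. Since $\mathfrak m$ is symmetric and compactly supported, convergence of $\EE\bigl[e_l(\vec c^{\,2})\bigr]$ for all $l$ is, by Newton's identities, equivalent to convergence of $\EE[p(\vec c^{\,2})]$ for every symmetric polynomial $p$, and by symmetry of $\mathfrak m$ any polynomial test function may be replaced by its symmetrization; the compactness of the support then upgrades this moment convergence to the claimed $\delta$-concentration of $\vec c^{\,2}$ on the root multiset of $P_{M,N}(z)$.
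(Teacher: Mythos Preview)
Your proposal has a genuine gap in the final paragraph. You establish the limit of $\EE[e_l(\vec c^{\,2})]$ for each $l$, but then assert that ``convergence of $\EE[e_l(\vec c^{\,2})]$ for all $l$ is, by Newton's identities, equivalent to convergence of $\EE[p(\vec c^{\,2})]$ for every symmetric polynomial $p$.'' This is false: Newton's identities are \emph{polynomial} identities among the $e_l$ and the $p_k$, and the expectation $\EE$ is linear but not multiplicative. Knowing $\EE[e_1],\dots,\EE[e_M]$ does not determine, for instance, $\EE[e_1^{\,2}]$ or $\EE[e_1e_2]$. Convergence of the distribution of $\vec c^{\,2}$ to a $\delta$-measure on polynomial test functions means exactly that the higher moments factorize: for all $k_1,\dots,k_M\ge 0$,
\[
\lim_{\theta\to\infty}\EE\Big[\prod_{i}e_i(\vec c^{\,2})^{k_i}\Big]=\prod_{i}\big(\lim_{\theta\to\infty}\EE[e_i(\vec c^{\,2})]\big)^{k_i}.
\]
This factorization is the entire content of the law of large numbers and is precisely what your argument does not address. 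Compact support does not rescue this: a uniformly bounded random variable whose mean converges need not concentrate.

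The paper's proof confronts exactly this point. After computing $\EE[e_l(\vec c^{\,2})]$ (which turns out to be $\theta$-independent---see Theorem~\ref{thm:characteristicpoly}), the main work is to show the factorization above. The paper does this by computing $\lim_{\theta\to\infty}\EE[P_\lambda(\vec c^{\,2};\theta)]$ for an \emph{arbitrary} partition $\lambda$ via Proposition~\ref{prop:polyexpectation}, using that $P_\lambda(\cdot;\theta)\to\prod_i e_i^{\lambda_i-\lambda_{i+1}}$ as $\theta\to\infty$, and then matching the resulting expression combinatorially (Proposition~\ref{prop:combcounting}) against $\prod_i(\EE[e_i(\vec c^{\,2})])^{\lambda_i-\lambda_{i+1}}$. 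Incidentally, your ``main obstacle'' for the $e_l$ step is not an obstacle at all: by Lemma~\ref{lem:automorphism} the coefficients $C^{v,\mu}_{(1^l)}(\theta)$ vanish unless $v=(1^i)$, $\mu=(1^j)$, so the remainder $R_l$ is identically zero and $\EE[e_l(\vec c^{\,2})]$ equals the $l$-th elementary symmetric polynomial of the roots of $P_{M,N}$ for every $\theta$, not just in the limit.
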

\begin{Remark}
 The polynomial $P_{N,M}(z)$ has previously appeared in~\cite{GrM}, and it is shown in \cite[Theorem 2.3]{GrM}, using the theory of stable polynomials, that all roots of $P_{N,M}(z)$ are real and nonnegative.
\end{Remark}

In the high temperature regime, when taking $\theta\rightarrow 0$, $M\rightarrow\infty$ and fixing $N$, the number of singular values, the type BC multivariate Bessel function becomes a simple symmetric combination of exponential functions
\begin{equation*}
 B(\Vec{a},M\theta x_{1},\dots,M\theta x_{N};\theta, N,M)\longrightarrow \frac{1}{N!}\sum_{\sigma\in S_{N}}\prod_{i=1}^{N}{\rm e}^{a_{i}^{2}x_{\sigma(i)}^{2}},
\end{equation*}
where $S_{N}$ denotes the symmetric group of degree $N$. See the appendix for more details. The limit expression has a clear probabilistic interpretation. Given deterministic $N$-tuple $\Vec{a}(N)$ and~$\Vec{b}(N)$ as before, $\Vec{c}(N)=(c_{1},\dots,c_{N}\ge 0)$ is obtained by choosing an uniformly random element $\sigma$ in~$S_{N}$, and taking
\smash{$
 \bigl(c_{1}^{2},\dots,c_{N}^{2}\bigr)=\bigl(a_{1}^{2}+b_{\sigma(1)}^{2},\dots,a_{N}^{2}+b_{\sigma(N)}^{2}\bigr)$}.
 Taking $N\rightarrow \infty$, and assume that the empirical measures of $\{\Vec{a}(N)\}_{N=1}^{\infty}$ and $\bigl\{\Vec{b}(N)\bigr\}_{N=1}^{\infty}$
 \[\frac{1}{N}\sum_{i=1}^{N}\delta_{x_{i}^{2}}\qquad (x_{i}=a_{N,i} \text{ or } b_{N,i})\]
 converge weakly to some probability measure $\mu_{a}$, $\mu_{b}$ on $\R_{\ge 0}$, then so does $\{\Vec{c}(N)\}_{N=1}^{\infty}$. Moreover, the limiting measure
 $\mu_{c}=\mu_{a}*\mu_{b}$,
 where $*$ denotes the usual convolution of measures on $\R$.

We observe two distinct limiting behaviors of $\vec{a}\boxplus_{N,M}^{\theta}\vec{b}$ as $N\to\infty$: for $\theta=0$ we obtain the usual convolution, whereas for $\theta>0$ we obtain the rectangular free convolution. This motivates us to consider an intermediate regime in which
$
N\to\infty$, $ M\to\infty$, $ \theta\to 0$, $ N\theta\to\gamma>0$, $ M\theta\to q\gamma$
for some $q\ge 1$. We then study the sequence of (random) virtual singular values
\[
{\vec{c}(N)=(c_{N,1}\ge\cdots\ge c_{N,N}\ge 0)}_{N=1}^{\infty}
\]
and the limiting behavior of the associated symmetric empirical measures.

\begin{Remark}
 The same intermediate regime was considered in~\cite{ABMV}, where the authors studied the limiting behavior of the Laguerre ensemble and proved that its empirical measure converges in this regime to a deterministic probability measure $\mu_{q,\gamma}$, which interpolates between the Marchenko--Pastur law (as $\gamma\to\infty$) and a certain Gamma distribution (as $\gamma\to 0$). As an application of the theory developed in this paper, we rederive this result in Section~\ref{sec:laguerre}. Analogous results for the Gaussian ensemble can be found in~\cite{ABG}.
\end{Remark}

\begin{Definition}\label{def:llnsatisfaction0}
 Let $\{\vec{a}(N)\}_{N=1}^{\infty}$ be a sequence of random $N$-tuple such that $\Vec{a}(N)=(a_{N,1}\ge \dots \ge a_{N,N}\ge 0)$. Denote
 \[
 p^{N}_{k}=\frac{1}{2N}\sum_{i=1}^{N}\bigl[a_{N,i}^{k}+(-a_{N,i})^{k}\bigr].
 \]
 We say $\{\Vec{a}(N)\}$ converges in moments, if there exist deterministic nonnegative real numbers $\{m_{k}\}_{k=1}^{\infty}$ such that for any $s=1,2,\dots$ and any $k_{1},\dots,k_{s}\in \Z_{\ge 1}$, we have
 \[
 \lim_{N\rightarrow \infty}\E{\prod_{i=1}^{s}p_{k_{i}}^{N}}=\prod_{i=1}^{s}m_{k_{i}}.
 \]
 We write
 \[
 \Vec{a}(N)\xrightarrow[N\rightarrow \infty]{m} \{m_{k}\}_{k=1}^{\infty}.
 \]
\end{Definition}
\begin{Remark}
 Definition~\ref{def:llnsatisfaction0} can be interpreted as saying that the empirical measure of~$(a_{N,1},\allowbreak\dots,a_{N,N})$ converges weakly to a deterministic probability measure with moments $\{m_{k}\}_{k=1}^{\infty}$, provided that the moment problem for $\{m_{k}\}_{k=1}^{\infty}$ has a unique solution. By definition, $p^{N}_{k}=0$ for all odd $k$, and hence $m_{k}=0$ for all odd $k$ as well. We work with symmetric empirical measures because there is no canonical choice of sign for the singular values.
\end{Remark}
\begin{Remark}
 The convergence is well-posed as long as the randomness of the sequence $\Vec{a}(N)$ are given by compactly supported generalized function, where the expectation $\EE$ is understood as testing the generalized function against the
polynomial function $p^{N}_{k}$ of $\Vec{a}(N)$.

\end{Remark}

We prove a law of large numbers for the symmetric empirical measure of $\vec{c}(N)$, interpreted as the empirical measure of the $N\times M$ matrix $C$ with singular values $c_{N,1},\dots,c_{N,N}$. We assume that the distribution of each $\vec{a}(N)$ and \smash{$\vec{b}(N)$} is given either by a real-valued, compactly supported generalized function or by an exponentially decaying measure. For the precise meaning of the latter notion and further details on this technical point, see Section~\ref{sec:bgf}.
\begin{thm}\label{thm:hightemplln}
 Fix $\gamma>0, q\ge 1$. For $M=1,2,\dots $, let $M(N)\ge N$, $\theta(N)>0$ be two sequences satisfying $N\rightarrow\infty$, $\theta\rightarrow 0$, $N\theta\rightarrow \gamma$, $M\theta\rightarrow q\gamma$ as $N\rightarrow \infty$. Suppose for two sequences of random tuple $\{\vec{a}(N)\}_{N=1}^{\infty}$, $\bigl\{\Vec{b}(N)\bigr\}_{N=1}^{\infty}$,
 \[\Vec{a}(N)\xrightarrow[N\rightarrow \infty]{m}\{m^{a}_{k}\}_{k=1}^{\infty},\qquad \Vec{b}(N)\xrightarrow[N\rightarrow \infty]{m}\bigl\{m^{b}_{k}\bigr\}_{k=1}^{\infty}.\]
 Then
 \[\Vec{a}(N)\boxplus^{\theta}_{N,M}\Vec{b}(N)\xrightarrow[N\rightarrow\infty]{m}\{m^{c}_{k}\}_{k=1}^{\infty},\]
 where \smash{$\{m^{c}_{k}\}_{k=1}^{\infty}$} is a sequence of nonnegative deterministic real numbers.

 We say $\{m^{c}_{k}\}_{k=1}^{\infty}$ is the $q$-$\gamma$ convolution of $\{m^{a}_{k}\}_{k=1}^{\infty}$ and \smash{$\bigl\{m^{b}_{k}\bigr\}_{k=1}^{\infty}$}, written as
 \[\{m^{c}_{k}\}_{k=1}^{\infty}=\{m^{a}_{k}\}_{k=1}^{\infty}\boxplus_{q,\gamma}\bigl\{m^{b}_{k}\bigr\}_{k=1}^{\infty}.\]
\end{thm}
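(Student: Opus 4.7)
The plan is to exploit the multiplicative identity for the type BC Bessel generating function. Writing $G^a_{M}(z_1,\ldots,z_M) = \EE[\B(\Vec{a}_M;z_1,\ldots,z_M;\theta,N)]$ and similarly for $\Vec{b}_M, \Vec{c}_M$, the defining relation \eqref{eq_additioindef} together with independence of $\Vec{a}_M$ and $\Vec{b}_M$ gives
\begin{equation*}
\log G^c_{M}(z_1,\ldots,z_M) \;=\; \log G^a_{M}(z_1,\ldots,z_M) \;+\; \log G^b_{M}(z_1,\ldots,z_M).
\end{equation*}
So if we can define, from appropriately rescaled partial derivatives of $\log G^x_M$ at the origin, a family of numerical invariants $\{\kappa^x_k\}$ (the $q$-$\gamma$ cumulants) that are additive under the sum and that converge in the regime $\theta\to 0$, $M\theta\to\gamma$, $N\theta\to q\gamma$, then the additivity is essentially automatic.

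First I would make precise the cumulant/moment correspondence at finite $M,N,\theta$. Using the characterization of $\B$ as an eigenfunction of a BC-type Dunkl--Laplace operator (reviewed in Section 2.2--2.3 of the paper and Appendix A), one expands $\B(\Vec{a};\vec z;\theta,N)$ as a symmetric power series in $\vec z$ whose coefficients are symmetric polynomials in $(a_1^2,\ldots,a_M^2)$. Denoting the power-sum moments $p_k^M = \frac{1}{2M}\sum_i (a_{M,i}^k + (-a_{M,i})^k)$, I would identify appropriately symmetrized/rescaled derivatives $\partial_{z_1}^{2k_1}\cdots \partial_{z_1}^{2k_s} \log G^a_M \big|_{\vec z=0}$, with suitable factors of $M$ and $\theta$ absorbed so the expressions remain $O(1)$ in the limit, as polynomial expressions in the joint moments $\EE[\prod_i p^{M}_{k_i}]$ (and vice-versa). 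This gives a triangular moment-cumulant correspondence at each finite $M$; the key input is the explicit structure of the low-order Taylor expansion of $\B$ at $0$, which can be read off from the Harish-Chandra--type integral representation or the hypergeometric series expression.

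Second, I would show that in the regime of interest, the $q$-$\gamma$ cumulants of a sequence $\{\Vec{a}_M\}$ converging in moments converge. The mechanism is that the $k$-th cumulant, after rescaling, is a polynomial combination of $\{p^M_{2j}\}_{j \le k}$ whose coefficients depend on $M,N,\theta$ in a way that stabilizes under $M\theta \to \gamma, N\theta \to q\gamma$. Combined with convergence in moments, this gives convergence of cumulants to limits $\kappa^a_k$ (depending only on $\{m^a_k\}, q, \gamma$); the same works for $\Vec{b}_M$. By additivity of $\log G$, the cumulants of $\Vec{c}_M$ converge to $\kappa^c_k = \kappa^a_k + \kappa^b_k$, and inverting the moment-cumulant correspondence in the limit defines $\{m^c_k\}$.

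Finally, to upgrade convergence of individual moments to joint moments of arbitrary products $\EE[\prod_i p^M_{k_i}] \to \prod_i m^c_{k_i}$ (i.e., a true law of large numbers rather than just convergence of means), I would argue that the mixed cumulants of order $\ge 2$ vanish in the limit, so the joint distribution concentrates on deterministic values. Quantitatively, this amounts to showing that the cross-derivatives $\partial_{z_{i_1}}\cdots\partial_{z_{i_r}} \log G^c_M \big|_0$ involving at least two distinct variable groups are $o(1)$ in the scaling $M\theta \to \gamma$, which is natural because $\log G$ grows like $M$ while mixed derivatives pick up an extra factor $M^{-1}$ from the symmetrization. I expect the main obstacle to be precisely this last step combined with the quantitative expansion of $\log \B$ to all orders: establishing uniform-in-$M$ control of the relevant Taylor coefficients of the BC Bessel function in the degenerating regime $\theta \to 0$, where the standard combinatorial formulas for derivatives of Jack/BC-type symmetric functions acquire singular $\theta^{-1}$ factors that must be shown to cancel against the $M\theta \to \gamma$ rescaling. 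This cancellation is what ultimately produces the well-defined $q$-$\gamma$ cumulants.
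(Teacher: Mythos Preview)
Your proposal is correct and follows essentially the same strategy as the paper: exploit the multiplicativity of the Bessel generating function so that $\log G^c = \log G^a + \log G^b$, define cumulants as limits of partial derivatives of $\log G$ at $0$, show that LLN is equivalent to convergence of these derivatives (single-variable ones to the cumulants, mixed ones to zero), and then pass additivity of cumulants to the limit. The paper packages the central equivalence ``LLN $\Longleftrightarrow$ $q$-$\gamma$-LLN-appropriate'' as a standalone result (Theorem~\ref{thm:hightemperaturemainthm}), after which Theorem~\ref{thm:hightemplln} is a two-line corollary; you have instead sketched that equivalence inline.

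One technical remark on mechanism: where you propose to read off the moment--cumulant relation by Taylor-expanding $\B$ via its hypergeometric/Jack series and computing ordinary partial derivatives of $\log G$, the paper instead uses the Dunkl eigenfunction relation $\mathrm{P}_{2k}\,G = \EE\big[\sum_i a_i^{2k}\big]\cdot G$ (Theorem~\ref{thm:dunklonbgf}) to extract moments directly, and then carries out an asymptotic analysis of the action of $\prod_i \mathrm{P}_{\lambda_i}$ on $\exp(F)$ (Theorem~\ref{thm:technicalequivalence}). This is what makes the $\theta^{-1}$ versus $M$ cancellation you anticipate tractable: the Dunkl operator already has the correct $\theta$-weighting built into its reflection terms, so the combinatorics reduces to tracking how many ``generic'' index choices occur rather than expanding Jack coefficients. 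Your route via the series expansion would in principle work but would require reproving much of that cancellation by hand.
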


The $q$-$\gamma$ convolution appears as a new member of the family of convolutions, each being a~binary operation on moment sequences $\{m_{k}\}_{k=1}^{\infty}$. Beyond introducing it in the context of high-temperature $\beta$-additions, we provide an explicit characterization in terms of $q$-$\gamma$ cumulants and establish its connections to classical convolutions.

\begin{thm}\label{thm:momentcumulantsummary}
There exists an invertible map $\mathrm{T}^{q,\gamma}_{m\rightarrow \kappa}\colon \R^{\infty}\rightarrow \R^{\infty}$, that corresponds each $\{m_{2k}\}_{k=1}^{\infty}$ with a collection of \emph{$q$-$\gamma$ cumulants} $\{\kappa_{l}\}_{l=1}^{\infty}$, i.e., $\{\kappa_{l}\}_{l=1}^{\infty}=\mathrm{T}^{q,\gamma}_{m\rightarrow \kappa}(\{m_{2k}\}_{k=1}^{\infty})$. The $q$-$\gamma$ cumulants linearize $q$-$\gamma$ convolution: for $l=1,2,\dots $,
\[\kappa_{l}(\{m^{c}_{2k}\}_{k=1}^{\infty})=\kappa_{l}(\{m^{a}_{2k}\}_{k=1}^{\infty})+\kappa_{l}\bigl(\bigl\{m^{b}_{2k}\bigr\}_{k=1}^{\infty}\bigr).\]
$\kappa_{l}=0$ for all odd $l$. Also $m_{k}^{a}$, $m_{k}^{b}$, $m_{k}^{c}$ are 0 for all odd $k$.

Treating each $\kappa_{l}$ as a variable of degree $l$, then each $m_{2k}$ is a homogeneous polynomial in the~$\kappa_{l}$ of degree $2k$, whose coefficients are polynomials of $q$, $\gamma$ with explicit combinatorial description.
Conversely, treating $m_{2k}$ as a variable of degree $2k$, each even $q$-$\gamma$ cumulant $\kappa_{2l}$ is a homogeneous polynomial in the $m_{2k}$ of degree $2l$.
\end{thm}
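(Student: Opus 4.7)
The plan is to extract both the moments and the $q$-$\gamma$ cumulants from the Bessel generating function $G_{N;\theta}(\vec z) = \EE[\B(\vec a;\vec z;\theta,N)]$ introduced in (\ref{eq_bgf0}). First I would identify a family of scalar observables $\mathcal D_l$ --- constant-coefficient symmetric differential operators in $z_1,\dots,z_M$ evaluated at $\vec z=0$, with an appropriate $M,N,\theta$ rescaling --- such that $\mathcal D_l G_{N;\theta}$ reproduces $\EE[p^M_l]$ and products of such observables reproduce the joint moments $\EE[\prod_i p^M_{k_i}]$ appearing in Definition \ref{def:llnsatisfaction0}. Theorem \ref{thm:hightemplln} then provides that these observables have well-defined scaling limits in the regime $M\theta\to\gamma$, $N\theta\to q\gamma$, producing $m^c_{2l}$ and products thereof.

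Now define the $q$-$\gamma$ cumulants $k_l$ by applying the same observables $\mathcal D_l$ to $\log G_{N;\theta}$ in place of $G_{N;\theta}$ and taking the same limit; equivalently, $\{k_l\}$ are the Taylor coefficients at $\vec z=0$ of the high-temperature limit of the logarithmic BGF. Linearization under $\boxplus_{q,\gamma}$ is then immediate from Definition \ref{def:deterministicaddition}: the identity $G_C=G_A\cdot G_B$ passes to $\log G_C=\log G_A+\log G_B$ already at finite $M,N,\theta$, so any linear observable splits additively, and passing to the limit yields $k_l^c=k_l^a+k_l^b$. This defines the map $\mathrm T^{q,\gamma}_{m\to k}$.

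Invertibility of $\mathrm T^{q,\gamma}_{m\to k}$ and the polynomial dependence of $m_{2k}$ on $\{k_l\}$ (and conversely) follow from the exponential formula $G=\exp(\log G)$: the Taylor coefficient of $\exp H$ at any given monomial is a finite polynomial in the coefficients of $H$, indexed combinatorially by set partitions, giving $m_{2k}$ as a polynomial in $k_1,\dots,k_{2k}$ with leading monomial $k_{2k}$. Triangularity then yields invertibility, and solving recursively produces $k_{2l}$ as a polynomial in $m_2,\dots,m_{2l}$. The grading $\deg m_{2k}=2k$, $\deg k_l=l$ is preserved because $\B(\vec a;\vec z;\theta,N)$ satisfies the scaling relation $\B(t\vec a;\vec z/t;\theta,N)=\B(\vec a;\vec z;\theta,N)$ (visible from the integral representation (\ref{eq_matrixint})), which forces $\mathcal D_l G$ to be homogeneous of degree $2l$ in the $a_i^2$, and this grading transfers to the moment-cumulant polynomials. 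The vanishing $m_{2k+1}=0$ is automatic from Definition \ref{def:llnsatisfaction0} since $p^M_{2k+1}\equiv 0$ under the symmetrized empirical measure, and parity together with triangularity forces $k_l=0$ for all odd $l$.

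The hardest step is producing the explicit combinatorial description of these polynomials in terms of $q,\gamma$. This requires computing the Taylor expansion of $\log G_{N;\theta}$ to arbitrary order with uniform control as $M,N\to\infty$, $\theta\to 0$ at the prescribed rates. The natural route is to exploit the characterization of the BC Bessel function as a joint eigenfunction of the Heckman--Opdam/Dunkl differential operators of type BC: these operators yield recurrences on the Taylor coefficients of $\B$, and in the scaling limit the recurrences should degenerate to a weighted combinatorial model (plausibly labelled set partitions carrying weights in $q,\gamma$), consistent with the classical cumulants at $\gamma\to\infty$, with the rectangular finite-free cumulants of \cite{GrM,Gri}, and with the $\gamma$-cumulants of the square case \cite{BCG} at $q=1$, as anticipated in the introduction.
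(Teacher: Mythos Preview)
Your overall shape is close to the paper's, but there is a genuine gap at the point where you identify the observables. You propose to use the \emph{same} operators $\mathcal D_l$ on both $G$ (for moments) and on $\log G$ (for cumulants), and then invoke the exponential formula to get the moment--cumulant relation as a sum over set partitions. In the paper these are two \emph{different} families of operators: the moments are extracted by the type~BC Dunkl operators $\mathrm P_{2k}=\sum_i D_i^{2k}$ acting on $G$ (Theorem~\ref{thm:dunklonbgf}), which are not constant-coefficient and involve the reflection terms $\frac{1-\sigma_{ij}}{z_i-z_j}$, $\frac{1-\tau_{ij}}{z_i+z_j}$, $\frac{1-\sigma_i}{z_i}$; the cumulants are the ordinary single-variable Taylor coefficients $\partial_{z_1}^l\log G|_0$ (Definition~\ref{def:llnappropriateness}). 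Because these families differ, the relation between $\{m_{2k}\}$ and $\{k_l\}$ is \emph{not} the classical exponential-formula polynomial over all set partitions; it is the operator identity of Definition~\ref{def:operators}, $m_{2k}=[z^0]\big(\partial+2\gamma d+((q-1)\gamma-\tfrac12)d'+*_g\big)^{2k-1}g(z)$, whose combinatorial interpretation (Theorem~\ref{thm:cumulanttomomentcomb}) is a sum over \emph{non-crossing even} partitions with $q,\gamma$-dependent weights $W(\pi)$. Your set-partition picture would give the wrong answer.

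Consequently your triangularity/invertibility argument does not go through as stated: it relies on having established that $m_{2k}$ is a polynomial in $k_2,\dots,k_{2k}$, but that is precisely what is proved by the asymptotic computation of $M^{-l(\lambda)}\prod_i\mathrm P_{\lambda_i}\exp(F)|_0$ in Theorem~\ref{thm:technicalequivalence} (using Lemmas~\ref{lem:generic}--\ref{lem:dunklonmonomial} and Propositions~\ref{prop:dunkllinear}--\ref{prop:dunklactionnonlinear}). You correctly flag at the end that the Dunkl eigenfunction characterization is the route to the combinatorics, but in the paper this is not a refinement added after the fact---it is the mechanism that produces the moment--cumulant map $\mathrm T^{q,\gamma}_{k\to m}$ in the first place, and hence the polynomial/triangular structure you need. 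The linearization argument via $\log G_C=\log G_A+\log G_B$ and the parity statements are correct and match the paper.
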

\begin{thm}\label{thm:degenerationsummary}
When $\gamma\rightarrow 0, q\gamma\rightarrow \infty$, the $q$-$\gamma$ convolution of $\{m_{k}^{a}\}_{k=1}^{\infty}$ and $\bigl\{m_{k}^{b}\bigr\}_{k=1}^{\infty}$ converges to the usual convolution of the two corresponding independent random variables, and the $q$-$\gamma$ cumulants of $\{m_{2k}^{a}\}_{k=1}^{\infty}$, $\bigl\{m_{2k}^{b}\bigr\}_{k=1}^{\infty}$ converge to the usual cumulants after proper rescaling.
Similarly, when $q$ is fixed, $\gamma\rightarrow\infty$, the $q$-$\gamma$ convolution converges to the rectangular free convolution, and the $q$-$\gamma$ cumulants converge to rectangular free cumulants after proper rescaling.

\end{thm}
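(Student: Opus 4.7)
The plan is to extract both degenerations from the generating-function description of the q-$\gamma$ cumulants: by the philosophy recalled after Theorem \ref{thm:hightemplln}, the cumulants $\{k_l\}$ arise from the log-expansion at $0$ of the limit of $\B(\vec a;z_1,\dots,z_M;\theta,N)$ in the regime $M\theta\to\gamma$, $N\theta\to q\gamma$ (after appropriate rescaling of the variables). Linearization of the q-$\gamma$ convolution by $\{k_l\}$ is already delivered by Theorem \ref{thm:momentcumulantsummary}, so for each degeneration it suffices to show that, after a suitable rescaling $k_l\mapsto\tilde k_l$, the resulting cumulant generating function degenerates to the cumulant generating function of the target theory; the identification of the convolutions then follows automatically, since both are characterized by the linearization of the same rescaled quantities.

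For the first degeneration, $q\to 0$ with $q\gamma\to\infty$, I would combine two inputs. The analytic input is the collapse of $\B(\vec a;N\theta z_1,\dots,N\theta z_M;\theta,N)$ to a symmetrization of exponentials in the squared variables, recorded after Theorem \ref{thm:hightemplln}, which identifies the $\theta\to 0$, $M$-fixed boundary of the q-$\gamma$ convolution with the classical convolution of the measures $\delta_{a_i^2+b_{\sigma(i)}^2}$. The combinatorial input is the explicit polynomial dependence of the moment-cumulant coefficients on $(q,\gamma)$ promised by Theorem \ref{thm:momentcumulantsummary}: each monomial $k_{l_1}\cdots k_{l_s}$ appearing in $m_{2k}$ carries a characteristic power of $q\gamma$, and in the stated limit only monomials of the dominant order survive. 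I would then check that the surviving combinatorial coefficients equal the number of set partitions enumerated by the classical moment-cumulant formula, and read off the rescaling $\tilde k_l$ that turns q-$\gamma$ cumulants into ordinary cumulants.

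For the second degeneration, $\gamma\to\infty$ with $q$ fixed, the strategy is to match with the framework of \cite{B1}. There, rectangular free cumulants arise from the asymptotics of $\B$ with $\theta\in\{1/2,1\}$ fixed and $M,N\to\infty$, $N/M\to q$; in the q-$\gamma$ regime, letting $\gamma=M\theta\to\infty$ with $q$ fixed drives us into exactly this asymptotic regime of the Bessel function. I would show that the log-expansion of the Bessel generating function that defines $\{k_l\}$ coincides, term-by-term in this limit, with the $R$-transform of \cite[Section 3.1]{B1} that defines $c^q_l$, so that after rescaling by the appropriate power of $\gamma$ dictated by the $\gamma$-degree of the corresponding moment-cumulant coefficient, the $\tilde k_l$ become the $c^q_l$. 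Linearity transfers from Theorem \ref{thm:momentcumulantsummary}, and the non-crossing combinatorics at the $q$-rectangular level is recovered as the leading coefficient of the polynomial expansion in $\gamma$.

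The hard part will be pinning down the precise scaling in $\gamma$ (and $q$) of each $k_l$ in each degeneration. Since a given $m_{2k}$ is a sum of many monomials in the $k_l$ with coefficients in $\mathbb{Q}[q,\gamma]$, one must isolate the dominant monomials in each limit and verify that the surviving coefficients match exactly the set-partition count (classical case) or the weighted non-crossing-partition count (rectangular free case). The analytic degeneration of the Bessel generating function is a relatively soft input; the delicate part is organizing this combinatorial matching of coefficients across the two cumulant theories.
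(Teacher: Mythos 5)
Your combinatorial instinct --- isolate the dominant powers of $q\gamma$ (resp.\ $\gamma$) in the moment--cumulant coefficients and match the survivors with the target theory --- is indeed how the paper proceeds, but the paper does this \emph{entirely} at the level of the already-proven formula of Theorem \ref{thm:cumulanttomomentcomb}, and the analytic ingredients you propose are either in the wrong regime or require an unjustified interchange of limits. The collapse of $\B(\Vec{a},N\theta z_{1},\dots,N\theta z_{M};\theta,N)$ to symmetrized exponentials is the $\theta\rightarrow 0$, $M$-fixed boundary; it motivates the picture but says nothing about the q-$\gamma$ cumulants, which live in the regime $M\rightarrow\infty$, $M\theta\rightarrow\gamma$. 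Likewise, for the second degeneration you want to argue that $\gamma=M\theta\rightarrow\infty$ ``drives us into'' the fixed-$\theta$ regime of \cite{B1} and to match the log of the Bessel generating function term-by-term with the rectangular R-transform; this is a double-limit interchange that is nowhere justified and is not needed. The paper's proof of Theorem \ref{thm:connectiontofreecumulant} is purely combinatorial: with the rescaling $r^{\gamma}_{l}=(2q\gamma)^{l-1}k_{l}$, the weights $W(\pi)$ converge as $\gamma\rightarrow\infty$ to $q^{-c(\pi)}$ with $c(\pi)=\#\{i: Q_{i}\ \text{odd}\}$, and the only nontrivial step is the observation $c(\pi)=e(\pi)$, which identifies the limit with Benaych-Georges' formula (\ref{eq_recfreecumulant}). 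Your route would have to reprove the LLN of \cite{B1} inside the q-$\gamma$ framework to make the R-transform comparison legitimate.

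The more serious gap is in the classical degeneration. After rescaling by $(q\gamma)^{k}$, the surviving terms of (\ref{eq_cumulanttomomentcomb}) are indexed by non-crossing partitions of $[k]$ (via the doubling bijection), but they carry nontrivial integer weights $(Q_{i}+1)\cdots P_{i}$, whereas the classical moment--cumulant formula is an \emph{unweighted} sum over \emph{all} set partitions $P(k)$, crossing ones included. So there is no partition-by-partition matching to ``check''; one must prove the identity that the weighted sum over $NC(k)$ equals the plain sum over $P(k)$. The paper does this by recognizing the weighted non-crossing sum as the $\gamma\rightarrow 0$ limit of the $\gamma$-convolution moment--cumulant formula (Proposition \ref{prop:gammacumulantcomb}) and invoking the known degeneration of $\gamma$-cumulants to classical cumulants from \cite[Theorem 8.2]{BCG}. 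Your proposal contains no mechanism for this step, and it is precisely the crux; without it (or an equivalent bijective/generating-function argument), the first half of the theorem is not established. Also note the explicit rescalings the paper pins down, $k^{'}_{l}=(q\gamma)^{l}2^{2l-1}(l-1)!\,k_{2l}$ and $r^{\gamma}_{l}=(2q\gamma)^{l-1}k_{l}$, which your plan leaves to be ``read off''.
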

Theorem~\ref{thm:hightemplln} is proved in Section~\ref{sec:lln}. Theorem~\ref{thm:momentcumulantsummary} summarizes the results of Sections~\ref{sec:cumulanttomoment} and~\ref{sec:momenttocumulant}: the combinatorial moment–cumulant formula is stated in Theorem~\ref{thm:cumulanttomomentcomb}, and the relation between the moment generating function and the cumulant generating function is given in Theorem~\ref{thm:momenttocumulant}. Theorem~\ref{thm:degenerationsummary} summarizes the connections between $q$-$\gamma$ convolution and classical and free convolution, proved in Theorems~\ref{thm:usualcumulant} and~\ref{thm:connectiontofreecumulant}, respectively. We also provide, in Theorem~\ref{thm:gammacumulant}, a limit transition from our $q$-$\gamma$ convolution to the $\gamma$-convolution defined in~\cite{BCG}, which is related to the asymptotic behavior of self-adjoint matrix additions in the high-temperature regime.

\subsection{A quantitative match between low and high temperatures}
It has been observed in $\beta$-random matrix theory that there is a duality between the parameters $\beta$ and $\tfrac{4}{\beta}$ \big(or, equivalently, between $\theta$ and $\tfrac{1}{\theta}$\big). For example, in~\cite{De} the author establishes an identity between averaged products of characteristic polynomials of Gaussian and chiral $\beta$-ensembles at parameters $\beta$ and $\tfrac{4}{\beta}$. Similarly, in~\cite{F2} it is shown that, for Gaussian, Laguerre, and Jacobi $\beta$-ensembles, the one-point and higher-point correlation functions describing linear statistics of eigenvalues at low and high temperature can be identified with each other.

The phenomenon is not yet fully understood. An analogue appears in the theory of symmetric polynomials, where there is an automorphism sending a Jack polynomial to its dual by simultaneously transposing its labeling Young diagram and inverting the parameter $\theta$; see \cite[Section~3]{S} or \cite[equation~(10.17)]{M} for a precise statement. Since in this paper we consider both low and high temperature regimes, this duality suggests a connection between them. When~$N$,~$M$ are fixed and $\theta\to\infty$, the vector \smash{$\vec{c}=\vec{a}\boxplus_{N,M}^{\theta}\vec{b}$} concentrates at the roots of $P_{N,M}(z)$, which are identified with the rectangular finite free convolution of $\vec{a}$ and \smash{$\vec{b}$} as defined in~\cite{GrM,MSS}. When $N,M\to\infty$, $\theta\to 0$, and $N\theta\to\gamma$, $M\theta\to q\gamma$, the vector $\vec{c}$ converges in moments to the $q$-$\gamma$ convolution of~$\vec{a}$ and~\smash{$\vec{b}$}. We find that the $(N,M)$-rectangular finite free convolution and the $q$-$\gamma$ convolution coincide under a suitable identification of parameters. More precisely, \cite{Gri} introduces a degree~$N$ polynomial, the so-called rectangular $R$-transform, which linearizes the rectangular finite free convolution. We interpret the coefficients of this rectangular $R$-transform as rectangular finite cumulants, and show that, upon identifying~$N$ in the rectangular finite convolution with $-\gamma$ in the $q$-$\gamma$ convolution, the corresponding moment–cumulant relations match exactly. Moreover, since both $N$ and $\gamma$ are positive, these two operations are analytic continuations of each other, jointly extending the moment–cumulant relation to $\gamma\in\R_{\ge 0}\cup\Z_{\le -1}$. See Section~\ref{sec:duality} for more details.

We also note that a similar identification between low and high temperature regimes holds for self-adjoint matrix additions. In~\cite{BCG}, the authors study the addition of two $N\times N$ self-adjoint matrices in the high temperature regime $N\to\infty$, $\theta\to 0$, $N\theta\to\gamma>0$, and introduce the so-called $\gamma$-convolution and $\gamma$-cumulants. On the other hand, \cite{AP} introduces a family of $d\times d$ free cumulants arising from finite self-adjoint matrix additions in the low temperature setting, and the authors of these two papers observed that their moment-cumulant relations also match upon identifying $d$ with $-\gamma$. We believe that this matching, appearing in both self-adjoint and rectangular matrix additions, should not be a mere coincidence.

\subsection{Techniques and difficulties}
Unlike many other classes of $\beta$-random matrices, we do not have a density function for our object $\Vec{c}=\Vec{a}\boxplus_{N,M}^{\theta}\Vec{b}$, and, due to the openness of the positivity conjecture, we cannot even guarantee that such a density exists. Consequently, the proofs of the main results in the low and high temperature regimes, Theorems~\ref{thm:lln} and~\ref{thm:hightemplln}, both rely heavily on moment calculations. We characterize the distribution of $\Vec{c}$ using the type BC Bessel generating function $G_{N;\theta}(x_{1},\dots,x_{N};\Vec{c})$, which is a new object in the random matrix literature, and we apply two different approaches in the low and high temperature regimes, respectively, to extract the moment information of $\Vec{c}$.

In order to apply such approaches, it is necessary to figure out the correct notion of Bessel function $B(\vec{c};x_{1},\dots,x_{N};\theta,N,M)$ for rectangular matrices. On one hand, we start from the case $\theta=\frac{1}{2},1,2$ and define $B(\Vec{c};x_{1},\dots,x_{N};\theta,N,M)$ as the matrix integral in~\eqref{eq_matrixint}, based on the probabilistic intuition of rectangular random matrices. On the other hand, for arbitrary $\theta>0$, we define our type BC Bessel function to be a symmetric Dunkl kernel, that is known as the joint eigenfunction of the corresponding type BC Dunkl operators, with eigenvalues given by the symmetric moments of $\Vec{c}$. While there are infinite versions of Dunkl kernels, we choose the root multiplicities $m_{\pm e_{i}}$, $m_{\pm e_{i}\pm e_{j}}$ in a unique way that
\begin{itemize}\itemsep=0pt
\item[(1)] For $\theta=\frac{1}{2},1,2$, it coincides with~\eqref{eq_matrixint}.

\item[(2)] For general $\theta>0$, it has nice explicit power series expansion that naturally extrapolates from $\theta=\frac{1}{2},1,2$.
\end{itemize}

We find such root multiplicities and verify the analytic and combinatorial properties of $B(\cdot;x_{1},\dots,x_{N};\theta,N,M)$ in Section~\ref{sec:pre}, by applying the general theory of special functions and symmetric spaces under random matrix motivations.

In the low temperature regime, we use the explicit expansion of the Bessel generating function to compute the limiting distribution of $\Vec{c}$. In the high temperature regime, we study the asymptotic behavior of the action of Dunkl operators on $G_{N;\theta}(x_{1},\dots,x_{N};\Vec{c})$, which extracts moment information. More precisely, in Theorem~\ref{thm:hightemperaturemainthm} we establish an equivalence between the following two conditions for a sequence of random $N$-tuple $\Vec{c}_{N}=(c_{N,1},\dots,c_{N,N})\in\R_{\ge 0}^{N}$, $N=1,2,\dots$, in the regime $N,M\to\infty$, $\theta\to 0$, $N\theta\to\gamma$, $M\theta\to q\gamma$:
\begin{itemize}\itemsep=0pt
\item[(1)] $\{\Vec{c}_{N}\}_{N=1}^{\infty}$ converges in moments as in Definition~\ref{def:llnsatisfaction0}.

\item[(2)] The $l^{\rm th}$ order partial derivative in $x_{1}$ of $\ln(G_{N;\theta}(x_{1},\dots,x_{N};\Vec{c}))$ at 0 converges to some real number for all $l=1,2,\dots $, and the partial derivatives in more than one variables among~${x_{1},\dots,x_{N}}$ at 0 all converge to 0.
\end{itemize}

The nontrivial limit of the $l^{\text{th}}$ order derivative in condition 2 yields, up to a constant factor, the $q$-$\gamma$ cumulant $\kappa_{l}$ of the sequence $\{\Vec{c}_{N}\}_{N=1}^{\infty}$. Note that this equivalence is itself independent of the addition operation and can be applied to a single sequence of (virtual) rectangular matrices; see Section~\ref{sec:laguerre} for an example.

\begin{Remark}
There exists an analog of Theorem~\ref{thm:hightemperaturemainthm} in the fixed temperature regime (for $\beta=1,2$), given in~\cite{B2} using a different approach that relies on the concrete matrix structure.
\end{Remark}

Compared to previous studies of rectangular additions, which mostly treat real or complex matrices, this paper defines and analyzes general $\beta$-additions that do not rely on a concrete matrix realization. Relative to the self-adjoint case, several additional technical difficulties arise. First, there are two size parameters, $N$ and $M$, and we allow them to grow at different rates. More importantly, due to the more involved root multiplicities, the type BC Bessel generating functions and type BC Dunkl operators have more complicated expressions, which in turn makes the combinatorics in the asymptotic analysis considerably more intricate. Because of these two issues, and because rectangular matrices are relatively less studied in the literature, it requires more effort to properly define the rectangular versions of empirical measures, moments, cumulants, and so on, and to identify the limiting regimes in which nontrivial behavior and connections to known objects arise. The reader will also see that the moment-cumulant relation for our $q$-$\gamma$ convolution is substantially more complicated, yet it degenerates to the usual, free, rectangular free, and $\gamma$-convolutions, which characterize several other random matrix addition models.

\subsection{Summary of the paper}

The paper is organized as follows. In Section~\ref{sec:pre}, we introduce the type BC Bessel function and the Bessel generating function, which play the role of characteristic functions for rectangular matrices. In Section~\ref{sec:lowtemp}, we study the low temperature behavior. In Section~\ref{sec:lln}, we prove the main theorem in the high-temperature regime and introduce the $q$-$\gamma$ cumulants in an analytic way. We then study the moment–cumulant relations for $q$-$\gamma$ convolution in more detail, provide an explicit combinatorial description, and point out its connection with classical free probability theory in Section~\ref{sec:momentcumulant}. In Section~\ref{sec:duality}, we investigate the quantitative connection between the low and high temperature regimes. Finally, the appendix gives a brief calculation about the high temperature behavior of the type BC Bessel functions.

\section{Bessel functions and Dunkl operators} \label{sec:pre}
\subsection{Symmetric polynomials}\label{sec:sympoly}
Symmetric polynomials are polynomials in $N$ variables for some $N\ge 1$, and are invariant under any permutation of $\vec{x}:=(x_1,\dots,x_{N})$. They play a crucial role in combinatorics, representation theory, and random matrices. This section fixes some related notation that we will use in this paper. For a detailed introduction of classical results of symmetric polynomials, see, e.g., \cite{M}.

\begin{Definition}\label{def:YD}
 A \emph{partition} $\lambda$ is an $N$-tuple of nonnegative integers $(\lambda_{1}\ge \lambda_{2}\ge \dots \ge \lambda_{N}\ge 0)$. We identify $(\lambda_{1},\dots,\lambda_{N})$ with $(\lambda_{1},\dots,\lambda_{N},0,\dots,0)$, and denote the \emph{length} of $\lambda$ by $l(\lambda)\in \Z_{\ge 1}$, which is the number of strictly positive $\lambda_{i}$. We say a partition is even, if $\lambda_{1},\dots,\lambda_{l(\lambda)}$ are all even.

 Let
 $|\lambda|=\sum_{i=1}^{l(\lambda)}\lambda_{i}$. A \emph{Young diagram} is a graphical representation of a partition. Given a~partition $\lambda$, view it as a collection of $|\lambda|$ boxes, such that there are $\lambda_{i}$ boxes in the $i^{\rm th}$ row. In this paper, we do not distinguish a partition and its corresponding Young diagram. Let $\lambda_{j}'$ be the number of boxes on the \smash{$j^{\rm th}$} column of $\lambda$, and \smash{$\lambda'=(\lambda_{1}',\dots,\lambda_{\lambda_{1}}')$} be the transpose of $\lambda$.

 For two partitions $\lambda$, $\mu$ such that $|\lambda|=|\mu|$, there is a lexicographical order between them, that is, $\lambda>\mu$ if and only if for some $j\in \Z_{\ge 1}$, $\lambda_{1}=\mu_{1},\dots,\lambda_{j-1}=\mu_{j-1}$ and $\lambda_{j}>\mu_{j}$.
\end{Definition}

A central object in symmetric polynomials that we use in this paper is the Jack polynomials, see \cite[Chapter VI.10]{M} for a detailed exposition. For completeness, we give a brief definition.\footnote{We will not directly apply this definition later though.} Let~$X$ be a formal auxiliary variable. Let $\partial_{i}$, $i=1,2,\dots,N$, be the partial derivative operator in $x_{i}$, and \smash{$V(\vec{x})=\prod_{1\le i<j\le N}(x_{i}-x_{j})$} be the Vandermonde determinant.

\begin{Definition}[{\cite[Chapter VI]{M}}]\label{def:jack}
 Let $D_{N}(X;\theta)$ be a differential operator of the form
 \[
 D_{N}(X;\theta)=V(\vec{x})^{-1}\det\Bigg[x_{i}^{N-j}\left(x_{i}\frac{\partial}{\partial x_{i}}+(N-j)\theta+X\right)\Bigg]_{1\le i,j\le N}.
 \]
 $D_{N}(X;\theta)$ is a generating function (with variable $X$) of linear differential operators $D_{N}^{1},\dots, D_{N}^{N}$ acting on the symmetric polynomials in $N$ variables, such that
 \[D_{N}(X;\theta)=\sum_{r=0}^{N}D^{r}_{N}X^{N-r}.\]

 The Jack polynomials in $N$ variables are a collection of elements $P_{\lambda}(\vec{x};\theta)$, indexed by the partitions $\lambda$ such that $l(\lambda)\le N$. Each $P_{\lambda}(\vec{x};\theta)$ is uniquely determined by the following two properties:
 \begin{equation}\label{eq_jack1}
 P_{\lambda}(\vec{x};\theta)=m_{\lambda}(\vec{x})+\sum_{\mu<\lambda}u_{\mu}^{\lambda}(\theta)m_{\mu}(\vec{x}),
 \end{equation}
 where $m_{\lambda}$ is the monomial symmetric polynomial indexed by the partition $\lambda$, $u_{\mu}^{\lambda}(\theta)\in \R$ are parameterized by $\theta$; and
\[
 D_{N}(X;\theta)P_{\lambda}(\vec{x};\theta)=c_{\lambda}^{\lambda}(\theta)P_{\lambda}(\vec{x};\theta),
 \]
 where
 \[
 c_{\lambda}^{\lambda}(\theta)=\prod_{i=1}^{N}\big(X+\theta^{-1}\lambda_{i}+N-i\big).\]
\end{Definition}

Furthermore, let \begin{equation}\label{eq_jack2}
Q_{\lambda}(\vec{x};\theta)=b_{\lambda}(\theta)\cdot P_{\lambda}(\vec{x};\theta)\end{equation}
be the dual of the Jack polynomial,
 where \[b_{\lambda}(\theta)=\prod_{(i,j)\in \lambda}\frac{\lambda_{i}-j+\theta (\lambda_{j}'-i)+\theta}{\lambda_{i}-j+\theta (\lambda_{j}'-i)+1}.\]

Given two Jack polynomials $P_{v}(\vec{x};\theta)$ and $P_{\mu}(\vec{x};\theta)$, their product $P_{v}(\vec{x};\theta)\cdot P_{\mu}(\vec{x};\theta)$ is again a symmetric polynomial, and hence can be written as a unique linear combination of Jack polynomials. Namely, we have the following equality, where $C^{v,\mu}_{\lambda}(\theta)$ is the coefficient of $P_{\lambda}(\vec{x};\theta)$ in the expansion:
\begin{equation}\label{eq_coefficient1}
P_{v}(\vec{x};\theta)P_{\mu}(\vec{x};\theta)=\sum_{\lambda}C^{v,\mu}_{\lambda}(\theta)P_{\lambda}(\vec{x};\theta)
.\end{equation}

It turns out that the coefficients $u_{\mu}^{\lambda}(\theta)$ are positive rational functions of $\theta$ independent of the number of variables, see \cite[Chapter VI.10]{M} for the explicit expressions. As a consequence, $C^{v,\mu}_{\lambda}(\theta)$ are also independent of $N$.

\subsection{Type BC Bessel functions}\label{sec:typebc}
For positive integers $M\ge N$, $\theta\in \R_{>0}$, take an $N$-tuple of nonnegative real numbers $\Vec{a}=(a_{1}\ge a_{2}\ge\dots \ge a_{N})$ as the given data. The type BC Bessel function $B(\Vec{a},x_{1},\dots,x_{N};\theta,N,M)$ is a version of a multivariate symmetric Fourier kernel, with certain nontrivial root multiplicities given by parameter $\theta>0$. In the special functions literature, this is a special case of the so-called symmetric Dunkl kernel, see Section~\ref{sec:dunkl}.

The following definition gives a matrix integral expression for $B(\Vec{a},x_{1},\dots,x_{N};\theta,N,M)$, when~${\theta=\frac{1}{2},1,2}$.

\begin{Definition}\label{def:matrixintegral}
When $\theta=1$,
\[B(\vec{a},x_1,x_2,\dots,x_N;\theta, N,M)=\int{\rm d}U\int {\rm d}V \exp \left(\frac{1}{2}\operatorname{Tr}(U\Lambda VX+X^{*}V^{*}\Lambda^{*}U^{*})\right),\]
where \begin{gather*}
 \Lambda=\begin{bmatrix}
a_{1} & & & &&0&\dots &0\\
 & a_{2} & & &&0&\dots & 0\\
 & &\dots & &&\\
 &&&\dots &\\
 &&&&a_{N} &0&\dots &0
\end{bmatrix}_{N\times M},\\
 X=\begin{bmatrix}
x_{1} & & & &\\
 & x_{2} & & &\\
 & &\dots &\\
 &&&\dots &\\
 &&&&x_{N} &\\
 0&&\dots &&0\\
 && \dots &&\\
 0&&\dots &&0
\end{bmatrix}_{M\times N},\end{gather*}
$U\in {\rm U}(N)$, $V\in {\rm U}(M)$ are integrated under Haar measures.

When $\theta=\frac{1}{2},2$, $B(\Vec{a},x_{1},\dots,x_{N};\theta,N,M)$ has a similar integral expression, with $({\rm U}(N),\allowbreak {\rm U}(M))$ replaced by $({\rm O}(N),{\rm O}(M))$ and $({\rm Sp}(N), {\rm Sp}(M))$, respectively.
\end{Definition}

Definition~\ref{def:matrixintegral} provides an explicit connection with rectangular matrices, where the integral is of the form as a ``Fourier transform''/characteristic function of $A=U\Lambda V$. However, since there is no (skew) field with real dimension $\beta$ for general $\beta>0$, one needs to define the Bessel functions in an alternate way that does not rely on explicit matrix structure. For this purpose, we first introduce some notations.

\begin{Definition}
For a partition $\mu$, let $s=(i,j)\in \mu$ be the coordinate of the box on the $j^{\rm th}$ column and the $i^{\rm th}$ row in $\mu$. Fix $t\in \R$, $\theta>0$. We denote
\begin{gather*}
H(\mu)=\prod_{s\in \mu}[\mu_{i}-j+1+\theta (\mu_{j}'-i)],
\qquad
H'(\mu)=\prod_{s\in \mu}[\mu_{i}-j+\theta+\theta (\mu_{j}'-i)],
\end{gather*}
and
\begin{gather*}
(t)_{\mu}=\prod_{s\in \mu}[t+j-1-\theta (i-1)].
\end{gather*}
\end{Definition}

\begin{Definition}\label{def:bessel}
Take $\theta>0$, $N\le M$, the type BC multivariate Bessel function labeled by~${\Vec{a}=(a_{1}\ge a_{2}\ge\dots \ge a_{N})}$ is a multivariate analytic function in both $\Vec{a}$ and $(x_{1},\dots,x_{N})$, defined by
\begin{gather}
B(\Vec{a},x_{1},\dots,x_{N};\theta,N,M)\nonumber\\
\qquad=\sum_{\mu}\prod_{i=1}^{N}\frac{\Gamma(\theta M -\theta (i-1))}{\Gamma(\theta M-\theta(i-1)+\mu_{i})}\frac{1}{H(\mu)}2^{-2|\mu|}\frac{P_{\mu}\bigl(a_{1}^{2},\dots,a_{N}^{2};\theta\bigr)P_{\mu}\bigl(x_{1}^{2},\dots,x_{N}^{2};\theta\bigr)}{P_{\mu}\bigl(1^{N};\theta\bigr)}\nonumber\\
\qquad=\sum_{\mu}\prod_{i=1}^{N}\frac{\Gamma(\theta N -\theta (i-1))}{\Gamma(\theta N-\theta(i-1)+\mu_{i})}\frac{\Gamma(\theta M -\theta (i-1))}{\Gamma(\theta M-\theta(i-1)+\mu_{i})}\frac{H'(\mu)}{H(\mu)}\nonumber\\
\phantom{\qquad=}{}\times 2^{-2|\mu|}P_{\mu}\bigl(a_{1}^{2},\dots,a_{N}^{2};\theta\bigr)P_{\mu}\bigl(x_{1}^{2},\dots,x_{N}^{2};\theta\bigr),\label{eq_expansion}
\end{gather}
where $\mu$ is summed over all partitions of length at most $N$.
\end{Definition}

\begin{Remark}
The last equality above holds since by \cite[Chapter~VI.10, equation~(10.20)]{M},
\[P_{\mu}\bigl(1^{N};\theta\bigr)=\frac{(N\theta)_{\mu}}{H'(\mu)}.\]
\end{Remark}

The following example gives a connection of $B(\cdot,x_{1},\dots,x_{N};\theta, N,M)$ with the usual single variable Bessel function.
\begin{ex}\label{ex:usualbessel}
 When $N=1$,
 \[B(a,{\rm i}x;\theta, 1,M)=\Gamma(M\theta)\cdot\left(\frac{ax}{2}\right)^{-(M\theta-1)} B_{M\theta-1}(ax),\]
 where $B_{\alpha}$ on the right hand side is the Bessel function of the first kind.
\end{ex}

Definition~\ref{def:bessel} generalizes the notion of the type BC Bessel function to any $\theta>0$. In particular, when $\theta=\frac{1}{2}, 1, 2$, Definition~\ref{def:bessel} provides an explicit power series expansion of the matrix integral in Definition~\ref{def:matrixintegral}.

\begin{thm}[{\cite[Section 13.4.3]{Forrester}}]\label{thm:fouriertransform}
For $\theta=\frac{1}{2}, 1, 2$,
\begin{align*}
 \int {\rm d}U\int {\rm d}V \,\exp\left( \frac{1}{2}\operatorname{Tr}(U\Lambda VX+X^{*}V^{*}\Lambda^{*}U^{*})\right)
=B(\Vec{a},x_{1},\dots,x_{N};\theta,N,M),
\end{align*}
where the matrix integral on the left is defined in the same way as in Definition~{\rm~\ref{def:matrixintegral}}.
\end{thm}

\begin{Remark}
 There are more than one way to show the equivalence of these two expressions, see, e.g., \cite{Xu} for more details.

 Let us mention that the integral expression on the left can be identified (up to a factor $2{\rm i}$ in the exponent) with the spherical function of the Euclidean symmetric spaces corresponding to~${{\rm O}(N+M)/{\rm O}(N)\times {\rm O}(M)}$, ${\rm U}(N+M)/{\rm U}(N)\times {\rm U}(M)$, ${\rm Sp}(N+M)/{\rm Sp}(N)\times {\rm Sp}(M)$, respectively, for $\theta=\frac{1}{2},1,2$. This is related to the fact that Bessel functions are eigenfunctions of Dunkl operators (presented in next section), since spherical function can also be defined as the (unique up to constant) eigenfunction of certain differential operators acting on its corresponding symmetric space. See, e.g., \cite{Hel2} for more details.
\end{Remark}

\subsection{Type BC Dunkl operators}\label{sec:dunkl}
As a special class of differential operators, Dunkl operators were introduced in~\cite{D}, and can be thought of as a generalization of the usual partial derivatives on multivariate analytic functions, which take Fourier kernels as eigenfunctions. For the purpose of this paper, we specify to a~special class of rational Dunkl operators under root system of type BC, which is parametrized by a single variable $\theta>0$ and plays a central role in Section~\ref{sec:lowtemp}. For more detailed exposition of Dunkl theory, see~\cite{A,Ro}.

\begin{Definition}\label{def:dunkl}
 For $M\ge N\ge 2$, $\theta>0$,
 let $D_{i}$ be a differential operator acting on analytic functions on $\C^{N}$ with variables $x_{1},\dots,x_{N}$, that
 \[
 D_{i}=\partial_{i}+\left[\theta(M-N+1)-\frac{1}{2}\right]\frac{1-\sigma_{i}}{x_{i}}+\theta\sum_{j\ne i}\left[\frac{1-\sigma_{ij}}{x_{i}-x_{j}}+\frac{1-\tau_{ij}}{x_{i}+x_{j}}\right],
 \]
 where $\sigma_{i}$ interchanges $x_{i}$ and $-x_{i}$, $\sigma_{ij}$ interchanges $x_{i}$ and $x_{j}$, and $\tau_{ij}$ interchanges $x_{i}$ and $-x_{j}$.
\end{Definition}

\begin{Remark}
There are Dunkl operators of general type, with a corresponding root system of type A-D and a complex-valued root multiplicity function. The
 $D_{i}$ in this paper specify \cite[Definition 2.7 and Example 2.15\,(3)]{Ro} by setting the root multiplicity function to be
 \[
 k_{e_{i}}=\theta(M-N+1)-\frac{1}{2},\qquad k_{e_{i}+e_{j}}=\theta.
 \]
\end{Remark}

\begin{prop}[\cite{D}]\label{prop:commutativity}
 The Dunkl operators of the same root multiplicities commute, i.e.,
$D_{i}D_{j}=D_{j}D_{i}$
 for any $1\le i$, $j\le N$.
\end{prop}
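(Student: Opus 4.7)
The plan is to reduce the claim to Dunkl's original commutativity theorem for rational Dunkl operators (Definition \ref{def:rationaldunkl}) by identifying the operators in Definition \ref{def:dunkl} as the instances $T_{e_i}(m)$ attached to the root system $BC_M$ with the $W$-invariant multiplicity function $m_{\pm e_i} = 2\theta(N-M)$, $m_{\pm 2e_i} = 2\theta - 1$, $m_{\pm e_i \pm e_j} = 2\theta$ listed in Section \ref{sec:typebc}. Concretely, I would rewrite
\[
D_i = \partial_i + \sum_{\alpha \in R_+} m_\alpha \langle \alpha, e_i\rangle\, \frac{1 - s_\alpha}{\langle \alpha, z\rangle},
\]
grouping the positive roots of $BC_M$ into the four orbits $\{e_i\}$, $\{2e_i\}$, $\{e_i - e_j\}_{i<j}$, $\{e_i + e_j\}_{i<j}$. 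Since the contributions of $e_i$ and $2e_i$ both reflect via $\sigma_i$, combining them gives the single term $[\theta(N-M+1) - \tfrac12]\frac{1-\sigma_i}{z_i}$ as in Definition \ref{def:dunkl}, and the off-diagonal terms match similarly. Once this matching is established, commutativity is the content of the general theorem of \cite{D}.

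If one instead wants a self-contained proof, I would write $D_i = \partial_i + T_i$ and expand
\[
[D_i, D_j] = [\partial_i, T_j] - [\partial_j, T_i] + [T_i, T_j],
\]
using $[\partial_i, \partial_j] = 0$. The first two brackets collect terms where $\partial_i$ hits the rational coefficients of $T_j$ (and vice versa), producing sums of expressions like $(1 - \sigma_{jk})/(z_j - z_k)^2$ together with the shift of $\partial_i$ across a reflection. The third bracket expands into products of two reflection operators, which after normal-ordering (moving all reflections to the right of all rational coefficients, picking up the action of $\sigma$ on the coefficient) yields another sum of rational-coefficient $\times$ reflection terms.

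The combinatorial heart of the proof is to collect, for each pair of indices $\{i,j\}$ and each triple $\{i,j,k\}$, all terms in the three brackets above that involve the same reflection element (e.g. $\sigma_{jk}$, $\sigma_{ij}\sigma_{jk}$, $\tau_{ij}$, etc.) and to verify that their coefficients cancel. The two-index cancellations involving $\sigma_j$, $\sigma_{ij}$, $\tau_{ij}$ are immediate because the coefficient functions are antisymmetric in the right way under the relevant reflection. The nontrivial cancellations are the three-index ones, which rely on the classical partial-fraction identity
\[
\frac{1}{(z_i - z_j)(z_i - z_k)} + \frac{1}{(z_j - z_k)(z_j - z_i)} + \frac{1}{(z_k - z_i)(z_k - z_j)} = 0,
\]
together with its four signed variants obtained by flipping one or two signs among $z_j, z_k$ to account for the roots $e_j \pm e_k$, plus the $W$-invariance $m_{s_\alpha \beta} = m_\beta$ of the multiplicities, which is visibly satisfied by our choice.

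The main obstacle is precisely this bookkeeping in the $BC$ case: whereas for type $A$ there is only one family of roots $e_i - e_j$ and a single triple identity, here the roots $e_i$, $2e_i$, $e_i \pm e_j$ interact and one must verify cancellations across all combinations of short, long, and sum-type roots. In practice, I would circumvent this by appealing to the general theorem \cite{D}, which already performs the bookkeeping for an arbitrary reduced root system and $W$-invariant multiplicity function; the role of our verification then reduces to confirming that our multiplicities are $W$-invariant, which holds by the orbit structure of $BC_M$.
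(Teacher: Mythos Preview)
Your proposal is correct and matches the paper's approach: the paper gives no independent proof but simply attributes the result to \cite{D}, and the remark immediately following Definition \ref{def:dunkl} already records the identification with the general rational Dunkl operators that you spell out. One minor slip: $BC_M$ is not reduced (it contains both $e_i$ and $2e_i$), so the phrase ``arbitrary reduced root system'' is inaccurate; however, since $s_{e_i}=s_{2e_i}$, Dunkl's theorem applies verbatim, or equivalently one may merge the two multiplicities into a single one on the reduced system $B_M$, exactly as you do when combining the $e_i$ and $2e_i$ contributions into the coefficient $\theta(N-M+1)-\tfrac12$.
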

The following result provides connection of type BC multivariate Bessel functions and Dunkl operators, namely, the former are eigenfunctions of the latter.

\begin{Definition}
 Fix $N\ge 1$. For $k=1,2,\dots $, denote $\mathrm{P}_{k}=D_{1}^{k}+\dots+D_{N}^{k}$.
\end{Definition}
\begin{thm}[{\cite[Proposition 4.5]{Ro3}}]\label{thm:dunklonbessel}
Given $\Vec{a}=(a_{1}\ge \dots \ge a_{N})$ for each $k=1,2,\dots $,
 \[
 \mathrm{P}_{2k}B(\Vec{a}, x_{1},\dots,x_{N};\theta,N,M)=\left(\sum_{i=1}^{N}(a_{i})^{2k}\right)\cdot B(\Vec{a},x_{1},\dots,x_{N};\theta,N,M).
 \]
\end{thm}

\begin{Remark}
 From Definition~\ref{def:bessel}, one can see that $B(\Vec{a},x_{1},\dots,x_{N};\theta, N,M)$ is symmetric under the actions of the Weyl group of the root system $BC_{N}$, namely, invariant by interchanging~$x_{i}$ with $x_{j}$ and replacing $x_{i}$ by $-x_{i}$. Similarly, it is necessary to take symmetric power sum of the $D_{i}$ with even power, which satisfies the same symmetry.
\end{Remark}

\subsection{Matrix addition and moments}\label{sec:addition}

For $\Vec{c}=\Vec{a}\boxplus_{N,M}^{\theta}\Vec{b}$, we assume in this section that $\Vec{a}$, $\Vec{b}$ are deterministic, and recall from Definition~\ref{def:deterministicaddition} that the distribution $\mathfrak{m}$ of $\Vec{c}$ is given by acting on type BC Bessel function.
Note that polynomials are bounded and smooth on compact sets, and therefore are legitimate test functions of $\mathfrak{m}$. Moreover, using the expansion in Definition~\ref{def:bessel}, we can view the type BC Bessel function as a generating function of symmetric polynomials of $N$ variables $c_{1},\dots,c_{N}$. More precisely, by expanding Bessel functions on both sides of~\eqref{eq_additioindef} using~\eqref{eq_expansion}, we have the following.

\begin{prop}\label{prop:polyexpectation}
For each partition $\lambda$ with $l(\lambda)\le N$, let \smash{$\Vec{c}=\Vec{a}\boxplus_{N,M}^{\theta}\Vec{b}$}, then
\begin{gather}
 \E{P_{\lambda}\bigl(c_{1}^{2},\dots,c_{N}^{2};\theta\bigr)}\nonumber\\
\qquad
=\sum_{|v|+|\mu|=|\lambda|}\frac{H(\lambda)}{H(v)H(\mu)}\frac{\prod_{i=1}^{N}\Gamma(\theta (M-i+1))\Gamma(\theta (M-i+1)+\lambda_{i})}{\prod_{i=1}^{N}\Gamma(\theta (M-i+1)+v_{i})\Gamma(\theta (M-i+1)+\mu_{i})}\nonumber\\
\phantom{\qquad
=}{} \times\frac{P_{\lambda}\bigl(1^{N};\theta\bigr)}{P_{v}\bigl(1^{N};\theta\bigr)P_{\mu}\bigl(1^{N};\theta\bigr)}C^{v,\mu}_{\lambda} (\theta)P_{v}\bigl(a_{1}^{2},\dots,a_{N}^{2};\theta\bigr)P_{\mu}\bigl(b_{1}^{2},\dots,b_{N}^{2};\theta\bigr),
\label{eq_polyexpectation}
\end{gather}
where $v$, $\mu$ are two partitions of length at most $N$.
\end{prop}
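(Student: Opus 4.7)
The plan is to expand both sides of the defining identity
$$\EE\bigl[\B(\Vec{c};z_{1},\dots,z_{M};\theta,N)\bigr]=\B(\Vec{a};z_{1},\dots,z_{M};\theta,N)\cdot\B(\Vec{b};z_{1},\dots,z_{M};\theta,N)$$
in Jack polynomials of $(z_{1}^{2},\dots,z_{M}^{2})$ using Proposition \ref{prop:bessel}, and then to read off the coefficient of each $P_{\lambda}(z_{1}^{2},\dots,z_{M}^{2};\theta)$. Since the Jack polynomials form a linear basis of $\Lambda_{M}$ and the type BC Bessel function is a convergent series in the basis $\{P_{\mu}(z_{1}^{2},\dots,z_{M}^{2};\theta)\}$ with coefficients that are explicit rational functions of the labels times $P_{\mu}(a_{1}^{2},\dots,a_{M}^{2};\theta)$, coefficient matching will produce exactly the claimed formula.

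More concretely, using (\ref{eq_expansion}) the left-hand side becomes
$$\sum_{\lambda}\prod_{i=1}^{M}\frac{\Gamma(\theta(N-i+1))}{\Gamma(\theta(N-i+1)+\lambda_{i})}\,\frac{2^{-2|\lambda|}}{H(\lambda)P_{\lambda}(1^{M};\theta)}\,\EE\bigl[P_{\lambda}(c_{1}^{2},\dots,c_{M}^{2};\theta)\bigr]\,P_{\lambda}(z_{1}^{2},\dots,z_{M}^{2};\theta),$$
where the interchange of $\EE$ and the infinite sum is legitimate because the generalized function $\mathfrak{m}$ carried by $\Vec{c}$ is compactly supported (as noted after Definition \ref{def:deterministicaddition}), so the series converges uniformly on the support. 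Expanding the product on the right-hand side via (\ref{eq_expansion}) gives a double sum over $v$ and $\mu$ with product $P_{v}(z_{1}^{2},\dots,z_{M}^{2};\theta)\,P_{\mu}(z_{1}^{2},\dots,z_{M}^{2};\theta)$, which by (\ref{eq_coefficient1}) decomposes as $\sum_{\lambda}C_{\lambda}^{v,\mu}(\theta)P_{\lambda}(z_{1}^{2},\dots,z_{M}^{2};\theta)$. Since $C_{\lambda}^{v,\mu}(\theta)=0$ unless $|v|+|\mu|=|\lambda|$, the resulting sum contributing to the coefficient of $P_{\lambda}$ is finite and is over pairs $(v,\mu)$ with $|v|+|\mu|=|\lambda|$.

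Equating the coefficients of $P_{\lambda}(z_{1}^{2},\dots,z_{M}^{2};\theta)$ on both sides and solving for $\EE[P_{\lambda}(c_{1}^{2},\dots,c_{M}^{2};\theta)]$, the $2^{-2|\lambda|}$ factors cancel (using $|v|+|\mu|=|\lambda|$), the $\Gamma(\theta(N-i+1))$ factors combine as in the statement, and the $H(\cdot)$, $P_{\cdot}(1^{M};\theta)$ denominators rearrange into the ratios $H(\lambda)/(H(v)H(\mu))$ and $P_{\lambda}(1^{M};\theta)/(P_{v}(1^{M};\theta)P_{\mu}(1^{M};\theta))$ appearing on the right-hand side of (\ref{eq_polyexpectation}). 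The only step that is not purely bookkeeping is the justification for reading off coefficients: this follows because both sides of (\ref{eq_additioindef}) are analytic $W$-invariant functions in a neighborhood of the origin, so their Taylor expansions at $0$ are unique, and the expansions in $\{P_{\lambda}(z_{1}^{2},\dots,z_{M}^{2};\theta)\}$ — which is a graded basis of the algebra of symmetric polynomials in $z_{1}^{2},\dots,z_{M}^{2}$ — inherit this uniqueness degree by degree.

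The main potential obstacle is technical rather than conceptual: one must carefully track the normalizations $H(\mu),\,H'(\mu),\,P_{\mu}(1^{M};\theta)$, and the Gamma ratios through the two equivalent forms of (\ref{eq_expansion}) so that the final formula has the precise shape stated. Beyond this, one should verify that testing the generalized function $\mathfrak{m}$ against the polynomial $P_{\lambda}(c_{1}^{2},\dots,c_{M}^{2};\theta)$ is well defined — this is immediate since $\mathfrak{m}$ is compactly supported and Jack polynomials are smooth — and that interchanging the expectation with the infinite expansion of $\B(\Vec{c};z;\theta,N)$ is valid, which again follows from compact support together with the pointwise convergence of the series on the support of $\mathfrak{m}$.
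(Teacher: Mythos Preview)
Your proposal is correct and follows exactly the approach the paper indicates: expand both sides of (\ref{eq_additioindef}) via the Jack-polynomial expansion (\ref{eq_expansion}) and equate coefficients of $P_{\lambda}(z_{1}^{2},\dots,z_{M}^{2};\theta)$. The paper states this in one line just before the proposition, while you have (correctly) spelled out the bookkeeping and the analytic justifications for interchanging the expectation with the series and for the uniqueness of the coefficient identification.
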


Proposition~\ref{prop:polyexpectation} provides explicit data of the distribution of random singular values $\Vec{a}\boxplus_{N,M}^{\theta}\Vec{b}$ in terms of moments. It is believed, but not yet proved that $\mathfrak m$ (with such moments) is indeed a~(symmetric) positive probability measure on $\R^{N}$. For $\beta=1,2,4$, this holds automatically because the probability measure is constructed explicitly by the matrix structure, (and \cite[Corollary~4.8]{Ro3} provides an explicit expression of this measure for $M\ge 2N$), while for general~${\beta>0}$, the randomness of $\Vec{a}\boxplus_{N,M}^{\theta}\Vec{b}$ holds and is studied in this paper in the weaker sense given by~\eqref{eq_polyexpectation}.

\subsection{Type BC Bessel generating functions}\label{sec:bgf}
For $\Vec{a}=(a_{1}\ge \dots \ge a_{N}\ge 0)$, we assume that $\Vec{a}$ is random, and its distribution is given by a~symmetric generalized function $\mathfrak{m}$ acting on smooth functions, and in particular polynomials, on $\R_{\ge 0}^{N}$.

\begin{Definition}\label{def:bgf}
 Fix $M\ge N$, $\theta>0$. Given a compactly supported symmetric generalized function $\mathfrak{m}$ on $\R_{\ge 0}^{N}$ defined as above, let the Bessel generating function of $\mathfrak{m}$ be a function of~${x_{1},\dots,x_{N}}$ given by
 \[
 G_{N,\theta}(x_{1},\dots,x_{N};\mathfrak{m}):=\langle \mathfrak{m}, B(\Vec{a},x_{1},\dots,x_{N};\theta,N,M)\rangle,
 \]
 where the bracket denotes testing $\mathfrak{m}$ by $B(\Vec{a},x_{1},\dots,x_{N};\theta, N,M)$, in which $\Vec{a}$ are the variables and $x_{1},\dots,x_{N}$ are parameters.
\end{Definition}

We also define the Bessel generating function for a class of fast decaying probability measures, for potential applications of our theory (see, e.g., Section~\ref{sec:laguerre}). As preparation, we state a uniform upper bound of multivariate Bessel functions.
\begin{prop}\label{prop:besselbound}
 For any $\theta>0$, $M\ge N$, $\Vec{a}=(a_{1}\ge\dots \ge a_{N})\in \R_{\ge 0}^{N}$, $x=(x_{1},\dots,x_{N})\in \R^{N}$, we have
 \begin{equation}\label{eq_besselbound1}
 0\le B(\Vec{a},x_{1},\dots,x_{N};\theta,N,M)\le \left[\frac{1}{\theta}+\frac{1}{\theta}\left(\frac{a_{1}|x|}{2}\right)^{2}{\rm e}^{\frac{a_{1}|x|}{2}}\right]^{N},
 \end{equation}
 and for any $k_{1},\dots,k_{s}\in \Z_{\ge 1}$,
 \begin{gather}
 \left|\left(\prod_{i=1}^{s}P_{2k_{i}}\right)B(\Vec{a},x_{1},\dots,x_{N};\theta,N,M)\right|\nonumber\\
 \qquad\le \prod_{i=1}^{s}\left(\sum_{j=1}^{N}a_{i}^{2k_{i}}\right)\left[\frac{1}{\theta}+\frac{1}{\theta}\left(\frac{a_{1}|x|}{2}\right)^{2}{\rm e}^{\frac{a_{1}|x|}{2}}\right]^{N}.\label{eq_besselbound2}
 \end{gather}
 \end{prop}
\begin{proof}
 Without loss of generality, assume that $|x_{1}|\ge |x_{2}|\ge \cdots \ge |x_{N}|$. From Definition~\ref{def:bessel} and~\eqref{eq_jack1}, it is clear that $B(\Vec{a},x_{1},\dots,x_{N};\theta,N,M)\ge 0$, and since \smash{$P_{\mu}\bigl(1^{N};\theta\bigr)=\frac{(N\theta)_{\mu}}{H'(\mu)}$},
 \begin{gather*}
 B(\Vec{a},x_{1},\dots,x_{N};\theta,N,M)\\
 \qquad=\sum_{\mu}\prod_{i=1}^{N}\frac{\Gamma(\theta M -\theta (i-1))}{\Gamma(\theta M-\theta(i-1)+\mu_{i})}\frac{(N\theta)_{\mu}}{H(\mu)H'(\mu)}2^{-2|\mu|}\frac{P_{\mu}\bigl(a_{1}^{2},\dots,a_{N}^{2};\theta\bigr)P_{\mu}\bigl(x_{1}^{2},\dots,x_{N}^{2};\theta\bigr)}{P_{\mu}\bigl(1^{N};\theta\bigr)^{2}}\\
 \qquad\le\sum_{\mu}\prod_{i=1}^{N}\frac{\Gamma(\theta M -\theta (i-1))}{\Gamma(\theta M-\theta(i-1)+\mu_{i})}\frac{(N\theta)_{\mu}}{H(\mu)H'(\mu)}2^{-2|\mu|}a_{1}^{2|\mu|}x_{1}^{2|\mu|}\\
 \qquad\le\sum_{\mu}\prod_{i=1}^{N}\left[\frac{\Gamma(\theta M -\theta (i-1))}{\Gamma(\theta M-\theta(i-1)+\mu_{i})}\frac{\Gamma(\theta N-\theta(i-1)+\mu_{i})}{\Gamma(\theta N-\theta(i-1))}\right]\\
\phantom{\qquad\le}{}\times\frac{1}{\prod_{i=1}^{N}\mu_{i}!}\frac{1}{\prod_{i=1}^{N}\prod_{j=0}^{\mu_{i}-1}(\theta+j)}2^{-2|\mu|}a_{1}^{2|\mu|}x_{1}^{2|\mu|},
 \end{gather*}
 where we rewrite $(N\theta)_{\mu}$ using Gamma functions, upper bound $\frac{1}{H(\mu)H'(\mu)}$ by the fractions in the last line above, and simply bound
 \[
 \frac{P_{\mu}\bigl(a_{1}^{2},\dots,a_{N}^{2};\theta\bigr)P_{\mu}\bigl(x_{1}^{2},\dots,x_{N}^{2};\theta\bigr)}
 {P_{\mu}(1^{N};\theta)^{2}}
 \]
 by \smash{$a_{1}^{2|\mu|}x_{1}^{2|\mu|}$}. Since $M\ge N$, the fractions in the bracket above is bounded by 1, and the whole expression above can be further bounded by
 \begin{gather*}
 \sum_{\mu_{1}\ge\dots \ge\mu_{N}\ge 0}\frac{1}{\prod_{i=1}^{N}\mu_{i}!}\frac{1}{\prod_{i=1}^{N}\prod_{j=0}^{\mu_{i}-1}(\theta+j)}\left(\frac{a_{1}x_{1}}{2}\right)^{2|\mu|}\\
\qquad\le\prod_{i=1}^{N}\left(\sum_{\mu_{i}=0}^{\infty}\frac{1}{\mu_{i}!\prod_{j=0}^{\mu_{i}-1}(\theta+j)}\left(\frac{a_{1}x_{1}}{2}\right)^{2\mu_{i}}\right)\\
 \qquad
 \le\prod_{i=1}^{N}\left(\frac{1}{\theta}+\sum_{\mu_{i}=1}^{\infty}\frac{1}{\theta}\frac{1}{[(\mu_{i}-1)!]^{2}}\left(\frac{a_{1}|x|}{2}\right)^{2\mu_{i}}\right)\\
 \qquad\le \prod_{i=1}^{N}\left(\frac{1}{\theta}+\frac{1}{\theta}\left(\frac{a_{1}|x|}{2}\right)^{2}{\rm e}^{\frac{a_{1}|x|}{2}}\right)
 =\left[\frac{1}{\theta}+\frac{1}{\theta}\left(\frac{a_{1}|x|}{2}\right)^{2}{\rm e}^{\frac{a_{1}|x|}{2}}\right]^{N},
 \end{gather*}
 where we bound
 \[\frac{1}{\prod_{i=1}^{N}\prod_{j=0}^{\mu_{i}-1}(\theta+j)}
 \]
 by $\prod_{i=1}^{N}\bigl(\frac{1}{\theta}\frac{1}{(\mu_{i}-1)!}\bigr)$. This verifies~\eqref{eq_besselbound1}. \eqref{eq_besselbound2} follows from~\eqref{eq_besselbound1} and Theorem~\ref{thm:dunklonbessel}.
\end{proof}

\begin{Definition}\label{def:expdecaying}
 We say a measure $\mathfrak{m}$ on the $N$-tuple $a_{1}\ge\dots \ge a_{N}\ge 0$ is \emph{exponentially decaying} with exponent $R>0$, if
$\int {\rm e}^{NRa_{1}} \mu({\rm d}a_{1},\dots,{\rm d}a_{N})<\infty$.
\end{Definition}

 By Proposition~\ref{prop:besselbound} and Definition~\ref{def:expdecaying}, the Bessel generating function of $\mathfrak{m}$, where $\mathfrak{m}$ is a~compactly supported generalized function or an exponentially decaying measure, is well-defined on a domain near 0. Moreover,
 we will take $\mathfrak{m}$ to be of total mass $1$, which means $\langle \mathfrak{m}, 1\rangle=1$, where $1$ is the constant function~1. So we have
 $G_{N,\theta}(0,\dots,0;\mathfrak{m})=1$.

Now we generalize the addition to random vectors $\Vec{a}$ and \smash{$\Vec{b}$} following Definition~\ref{def:deterministicaddition}.

\begin{Definition}\label{def:additioningeneral}
Given $\theta>0$, $M\ge N$, let $\Vec{a}=(a_{1}\ge\dots \ge a_{N}\ge 0)$, $\Vec{b}=(b_{1}\ge\dots \ge b_{N}\ge 0)$ be two random $N$-tuples whose distributions are given by generalized functions $\mathfrak{m}_{a}$ and $\mathfrak{m}_{b}$ on~$\R_{\ge 0}^{N}$. Let $\Vec{c}$ be a symmetric random vector in $\R_{\ge 0}^{N}$ whose distribution is given by generalized function~$\mathfrak{m}_{c}$, such that
\[
G_{N,\theta}(x_{1},\dots,x_{N};\mathfrak{m}_{c})=
G_{N,\theta}(x_{1},\dots,x_{N};\mathfrak{m}_{a})\cdot G_{N,\theta}(x_{1},\dots,x_{N};\mathfrak{m}_{b}).
\]
We write
\smash{$\Vec{c}=\Vec{a}\boxplus_{N,M}^{\theta}\Vec{b}$}.
\end{Definition}

Since $B(x_{1},\dots,x_{N};\theta,N,M)$ behaves nicely enough in the analytic sense, one can interchange the differentiation over $x_{1},\dots,x_{N}$ and the pairing with $\mathfrak{m}$, and therefore Theorem
\ref{thm:dunklonbessel} generalizes to the following.
\begin{thm}\label{thm:dunklonbgf}
 Let $\mathfrak{m}$ be a symmetric compactly supported generalized function on $\R^{N}$, or an exponentially decaying measure as in Definition~{\rm\ref{def:expdecaying}} with exponent $R$. Let $k_{1},\dots,k_{s}\in \Z_{\ge 1}$. Then $G_{N,\theta}( x_{1},\dots,x_{N};\mathfrak{m})$ is analytic as a function of $(x_{1},\dots,x_{N})$ {\rm(}in the domain $\{x\in \R^{N}\mid |x|<R\}$ in the second case$)$. Moreover,
 \[
 \left(\prod_{i=1}^{s}\mathrm{P}_{2k_{i}}\right)G_{N,\theta}( x_{1},\dots,x_{N};\mathfrak{m})\bigr|_{x_{1}=\dots =x_{N}=0}=\left\langle\mathfrak{m}, \prod_{i=1}^{s}\left(\sum_{j=1}^{N}(a_{j})^{2k_{i}}\right)\right\rangle.
 \]
 The above properties also hold for \[G_{N,\theta}(x_{1},\dots,x_{N};\mathfrak{m}_{c})=
G_{N,\theta}(x_{1},\dots,x_{N};\mathfrak{m}_{a})\cdot G_{N,\theta}(x_{1},\dots,x_{N};\mathfrak{m}_{b}),
\]
where $\mathfrak{m}_{a}$, $\mathfrak{m}_{b}$ are of the above two types.
\end{thm}
\begin{proof}
 This follows from dominated convergence theorem, where the uniform upper bounds of~${B(\cdot,x_{1},\dots,x_{N};\theta, N,M)}$ and its derivatives are given by Proposition~\ref{prop:besselbound}.
\end{proof}

\section{Concentration in low temperature}\label{sec:lowtemp}
In this section, we fix the sizes of the matrices $N$, $M$ and take the inputs $\Vec{a}$, $\Vec{b}$ to be deterministic, and study the behavior of $\Vec{c}=\Vec{a}\boxplus_{N,M}^{\theta}\Vec{b}$ as $\theta\rightarrow \infty$. According to the statistical physics interpretation, when $\theta\rightarrow \infty$ the temperature is going down to 0, and hence the random vector~$\Vec{c}$ will freeze at some deterministic $N$-tuple.
\subsection{Finite law of large numbers}
Before taking the limit, we consider the expected characteristic polynomial of $CC^{*}$ for each $\theta<\infty$. It turns out that the expression does not really depend on $\theta$. The following lemma will be used later in the proof. Let $C^{v,\mu}_{\lambda}(\theta)$ be the coefficient defined in~\eqref{eq_coefficient1}.

\begin{Lemma}\label{lem:automorphism}
When $\lambda=1^{l}$, $C^{v,\mu}_{\lambda}(\theta)\ne 0$ only when $v=1^{i}$, $\mu=1^{j}$, and $i+j=l$. Moreover,
\[
C^{1^{i},1^{j}}_{1^{l}}(\theta)=\frac{\prod_{m=1}^{l}\bigl(\frac{l\theta-m\theta+\theta}{l\theta-m\theta+1}\bigr)}{\prod_{m=1}^{i}\bigl(\frac{i\theta-m\theta+\theta}{i\theta-m\theta+1}\bigr)
\prod_{m=1}^{j}\bigl(\frac{j\theta-m\theta+\theta}{j\theta-m\theta+1}\bigr)}.
\]
\end{Lemma}
\begin{proof}
This is studied in~\cite{GM}, and for the convenience of the readers we reproduce the proof. Applying the automorphism $\omega_{\theta}$ of the algebra of symmetric functions (see \cite[Chapter VI.10]{M}), which acts on Jack polynomials in the following way
$
\omega_{\theta}(P_{\lambda}(\vec{x};\theta))=Q_{\lambda'}\bigl(\vec{x};\theta^{-1}\bigr)$,
\eqref{eq_coefficient1} becomes
\[
Q_{(i,0,\dots )}\bigl(\vec{x};\theta^{-1}\bigr)\cdot Q_{(j,0,\dots )}\bigl(\vec{x};\theta^{-1}\bigr)=\sum_{\mu}C^{1^{i},1^{j}}_{\mu}(\theta)\cdot Q_{\mu'}\bigl(\vec{x};\theta^{-1}\bigr).
\]
Recall from~\eqref{eq_jack2} that $Q_{\lambda}(\vec{x};\theta)=b_{\lambda}(\theta)P_{\lambda}(\vec{x};\theta)=\frac{H(\lambda)}{H'(\lambda)}P_{\lambda}(\vec{x};\theta)$. By comparing the coefficient of the leading monomial \smash{$x_{1}^{l}$}, we have
\[
C^{1^{i},1^{j}}_{1^{l}}(\theta)=\frac{b_{1^{i}}\bigl(\theta^{-1}\bigr)b_{1^{j}}\bigl(\theta^{-1}\bigr)}{b_{1^{l}}\bigl(\theta^{-1}\bigr)},
\]
and $C^{v,\mu}_{1^{l}}=0$ if $v$ or $\mu$ has more than one column.
\end{proof}

\begin{thm}\label{thm:characteristicpoly}
Fix $M\ge N$, given $\Vec{a}$ and $\Vec{b}$, let $\Vec{c}=\Vec{a}\boxplus_{N,M}^{\theta}\Vec{b}$. Take $z$ as a formal variable, and let
\begin{equation}\label{eq_characteristicpoly}
P_{N,M}^{\theta}(z)=\E{\prod_{i=1}^{N}\bigl(z-c_{i}^{2}\bigr)}.
\end{equation}
Then the explicit expression of $P_{N,M}^{\theta}(z)$ is $\theta$-independent, and
$
P_{N,M}^{\theta}(z)
=P_{N,M}(z)
$
for all $\theta>0$, where $P_{N,M}(z)$ is defined in {\rm\eqref{eq_charpoly}}.
\end{thm}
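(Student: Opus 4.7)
The plan is to expand $P_{M,N}^\theta(z)$ coefficientwise in $z$, reducing the problem to computing $\mathbb{E}[e_l(c_1^2,\dots,c_M^2)]$ for $l=0,1,\dots,M$, and then verifying that each of these expectations is $\theta$-independent and matches the coefficient of $z^{M-l}$ in $P_{M,N}(z)$ from \eqref{eq_charpoly}. The starting point is the observation that the elementary symmetric polynomial $e_l$ coincides with the Jack polynomial $P_{1^l}(\cdot;\theta)$ for every $\theta$, since $1^l$ is the smallest partition of $l$ in the lexicographic order and therefore $P_{1^l} = m_{1^l} = e_l$. Hence I may apply Proposition \ref{prop:polyexpectation} with $\lambda=1^l$.

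Next, I would invoke Lemma \ref{lem:automorphism}, which implies that the only partitions $(v,\mu)$ with $|v|+|\mu|=l$ that contribute to the sum are $v=1^i$, $\mu=1^j$ with $i+j=l$, and consequently $P_v(a^2;\theta)=e_i(a^2)$, $P_\mu(b^2;\theta)=e_j(b^2)$. This reduces the expectation to
\begin{equation*}
\mathbb{E}[e_l(c^2)] = \sum_{i+j=l} K^\theta_{i,j}(M,N)\, e_i(a^2)\,e_j(b^2),
\end{equation*}
where $K^\theta_{i,j}(M,N)$ is the product of four factors from \eqref{eq_polyexpectation}. Then I would verify each factor separately. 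The Gamma-function ratio, using $\Gamma(x+1)=x\Gamma(x)$ and the fact that $\lambda=1^l$, $v=1^i$, $\mu=1^j$ have only their first $l$ (resp.\ $i,j$) entries equal to $1$, telescopes neatly: the common $\Gamma$-factors cancel, the remaining powers of $\theta$ combine as $\theta^{l-i-j}=1$, and one is left with the $\theta$-independent ratio $\dfrac{(N-i)!(N-j)!}{N!(N-l)!}$. The evaluation $P_{1^k}(1^M;\theta)=\binom{M}{k}$ follows from \eqref{eq_Jack(1)} together with the direct computation $(M\theta)_{1^k}=\theta^k M!/(M-k)!$ and $H'(1^k)=\theta^k k!$, giving the $\theta$-independent ratio
\begin{equation*}
\frac{P_{1^l}(1^M;\theta)}{P_{1^i}(1^M;\theta)P_{1^j}(1^M;\theta)} = \frac{i!\,j!\,(M-i)!(M-j)!}{l!\,(M-l)!\,M!}.
\end{equation*}

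The remaining factor $\dfrac{H(1^l)}{H(1^i)H(1^j)}\,C^{1^i,1^j}_{1^l}(\theta)$ is the delicate point and is where most of the work lies. Using $(1^l)'=(l)$, $b_{(k)}(\theta)=(\theta)_k/k!$, and the identity $b_{(k)}(\theta^{-1})=H(1^k)/(\theta^k k!)$, the automorphism formula \eqref{eq_coefficient4} yields
\begin{equation*}
C^{1^i,1^j}_{1^l}(\theta)=\frac{b_{(i)}(\theta^{-1})\,b_{(j)}(\theta^{-1})}{b_{(l)}(\theta^{-1})} = \frac{l!\,H(1^i)H(1^j)}{i!\,j!\,H(1^l)}\,\theta^{l-i-j} = \frac{l!\,H(1^i)H(1^j)}{i!\,j!\,H(1^l)}
\end{equation*}
(again using $i+j=l$). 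Multiplying by $H(1^l)/[H(1^i)H(1^j)]$ produces the $\theta$-free factor $l!/(i!j!)$. Combining all four contributions then gives
\begin{equation*}
K^\theta_{i,j}(M,N) = \frac{(M-i)!(M-j)!}{M!(M-l)!}\cdot\frac{(N-i)!(N-j)!}{N!(N-l)!},
\end{equation*}
manifestly independent of $\theta$, and exactly matching the coefficient appearing in \eqref{eq_charpoly}. Assembling $\sum_l (-1)^l \mathbb{E}[e_l(c^2)]\,z^{M-l}$ then produces $P_{M,N}(z)$, completing the proof. The main obstacle in practice is the bookkeeping: one must track the $\theta$-powers carefully in the Gamma ratio, in $(M\theta)_{1^k}$, in $H(1^k)$, and especially in the inversion $\theta\mapsto\theta^{-1}$ of the $b_\lambda$ factors, because a single sign or exponent error would break the cancellation that makes the final coefficient $\theta$-independent.
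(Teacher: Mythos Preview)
Your proposal is correct and follows essentially the same approach as the paper: both expand in elementary symmetric polynomials, identify $e_l=P_{1^l}(\cdot;\theta)$, apply Proposition~\ref{prop:polyexpectation} with $\lambda=1^l$, use Lemma~\ref{lem:automorphism} to restrict the sum to $v=1^i$, $\mu=1^j$, and then verify that each of the four factors (the Gamma ratio, the $P_\lambda(1^M)$ ratio, and $\tfrac{H(1^l)}{H(1^i)H(1^j)}C^{1^i,1^j}_{1^l}(\theta)$) simplifies to the $\theta$-independent expression in~\eqref{eq_charpoly}. The only cosmetic difference is that the paper obtains $\tfrac{H(1^l)}{H(1^i)H(1^j)}C^{1^i,1^j}_{1^l}(\theta)=\tfrac{H'(1^l)}{H'(1^i)H'(1^j)}=\tfrac{l!}{i!j!}$ directly, while you route the same computation through the $b_{(k)}(\theta^{-1})$ factors.
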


\begin{proof}
Rewrite the product on the right side of~\eqref{eq_characteristicpoly} as
\[\prod_{i=1}^{N}\bigl(z-c_{i}^{2}\bigr)=\sum_{l=0}^{N}(-1)^{l}e_{l}\bigl(c_{1}^{2},\dots,c_{N}^{2}\bigr)z^{N-l},\]
it turns out that $P_{N,M}^{\theta}(z)$ is given by the moments of $\bigl\{c_{i}^{2}\bigr\}_{i=1}^{N}$ only in terms of elementary symmetric polynomials.

Taking the partition $\lambda=\bigl(1^{j},0^{M-j}\bigr)$, $P_{\lambda}(\vec{x};\theta)=e_{j}(\vec{x})$ for any $\theta>0$. We use Proposition~\ref{prop:polyexpectation} and it remains to specify the coefficients.
From Lemma~\ref{lem:automorphism}, we get
\[\frac{H\bigl(1^{l}\bigr)}{H\bigl(1^{i}\bigr)H\bigl(1^{j}\bigr)}C^{1^{i},1^{j}}_{1^{l}}(\theta)=\frac{H'\bigl(1^{l}\bigr)}{H'\bigl(1^{i}\bigr)H'\bigl(1^{j}\bigr)}=\frac{l!}{i!j!}.\]
Moreover, direct calculation yields
\[\frac{e_{l}\bigl(1^{N}\bigr)}{e_{i}\bigl(1^{N}\bigr)e_{j}\bigl(1^{N}\bigr)}=\frac{i!(N-i)!j!(N-j)!}{N!l!(N-l)!},\]
and when $\lambda=1^{l}$, $v=1^{i}$, $\mu=1^{j}$,
\[\frac{\prod_{i=1}^{N}\Gamma(\theta (M-i+1))\Gamma(\theta (M-i+1)+\lambda_{i})}{\prod_{i=1}^{N}\Gamma(\theta (M-i+1)+v_{i})\Gamma(\theta (M-i+1)+\mu_{i})}=\frac{(M-i)!}{M!}\frac{(M-j)!}{(M-l)!}.\] Combine all these together finishes the proof.
\end{proof}

We highlight the connection of our result with the so-called finite free probability, which was initiated in recent years by Marcus, Spielman and Srivastava and studies convolution of polynomials. Given two polynomials $p(z)=\sum_{i=0}^{N}z^{N-i}a_{i}, q(z)=\sum_{i=0}^{N}z^{N-i}b_{i}$ with degree at most $N$, \cite{MSS} defines the rectangular additive convolution for two $N\times N$ matrices, and~\cite{GrM} generalizes it to arbitrary rectangular matrices, such that the $(N,M)^{\rm th}$ rectangular additive convolution of $p(z)$ and $q(z)$ is defined as
\[p(z)\boxplus\boxplus_{N}^{M-N}q(z)=\sum_{l=0}^{N}z^{N-l}(-1)^{l}\sum_{i+j=l}\Bigg(\frac{(N-i)!(N-j)!}{N!(N-l)!}\frac{(M-i)!(M-j)!}{M!(M-l)!}a_{i}b_{j}\Bigg).\]

Let $\chi_{z}(\cdot)$ be the characteristic polynomial $\det(zI-\cdot)$. Take $p(z)=\chi_{z}(AA^{*})$, $q(z)=\chi_{z}(BB^{*})$, where $A$ and $B$ are two $N\times M$ real/complex matrices, and let $U_{N\times N}$, $V_{M\times M}$ are independent Haar orthogonal/unitary. In~\cite{GrM}, it is shown that
\[
p(z)\boxplus\boxplus_{N}^{M-N}q(z)=\E{\chi_{z}((A+UBV)(A+UBV)^{*})}.
\]
 Theorem~\ref{thm:characteristicpoly} generalizes this operation from $\theta=\frac{1}{2},1$ to arbitrary $\theta>0$, with a different approach not relying on the concrete matrix structure. In particular, it shows that the rectangular additive convolution is $\theta-$independent.

Our next result is the law of large numbers of $\Vec{c}=\Vec{a}\boxplus_{N,M}^{\theta}\Vec{b}$ in the regime $\theta\rightarrow \infty$. As preparation we state a combinatorial result. For partitions $v$, $\mu$, $\lambda$ such that $\max(v_{1}, \mu_{1})\le \lambda_{1}$, $l(v)$, and $ \max(l(v),l(\mu), l(\lambda))\le N$,
let $\{(k,l)\}$ be an index set that $l=1,2,\dots,\lambda_{k}-\lambda_{k+1}, k=1,2,\dots,N$. Furthermore, let $\{i(k,l)\}$, $\{j(k,l)\}$ be two collections of nonnegative integers. We do not distinguish \smash{$\{i(k,l)\}_{l=1}^{\lambda_{k}-\lambda_{k+1}}$} with \smash{$\{i(k,\sigma(l))\}_{l=1}^{\lambda_{k}-\lambda_{k+1}}$}, where $\sigma\in S_{\lambda_{k}-\lambda_{k+1}}$ is an arbitrary permutation, and same for \smash{$\{j(k,l)\}_{l=1}^{\lambda_{k}-\lambda_{k+1}}$}.

In the remainder of the text, let $\mathbf{1}_{E}$ denote the indicator function of the set $E$.

\begin{prop}\label{prop:combcounting}
Let $C^{v,\mu}_{\lambda}$ be the coefficient of $m_{\lambda}(\vec{x})$ in the expansion
\[m_{v}(\vec{x})\cdot m_{\mu}(\vec{x})=\sum_{\lambda}C^{v,\mu}_{\lambda}m_{\lambda}(\vec{x}).\]
Then
\smash{$
C^{v',\mu'}_{\lambda'}=$}\# ways to choose $\{i(k,l)\}$, $\{j(k,l)\}$ such that for $n=1,2,\dots,N$,
\begin{align}\label{eq_combcounting}
 v_{n}=\sum_{k=1}^{N}\sum_{l=1}^{\lambda_{k}-\lambda_{k+1}}\mathbf{1}_{i(k,l)\ge n}, \qquad
 \mu_{n}=\sum_{k=1}^{N}\sum_{l=1}^{\lambda_{k}-\lambda_{k+1}}\mathbf{1}_{j(k,l)\ge n}.
\end{align}
\end{prop}
\begin{proof}
We choose $\{i(k,l)\}$, $\{j(k,l)\}$ in an explicit way. By definition of $C^{v,\mu}_{\lambda}$, we are combining column $v'_{l_{1}}$ with column $\mu'_{l_{2}}$ to get a column $\lambda'_{l_{3}}$, where $l_{1}$, $l_{2}$, $l_{3}$ are chosen among $1,2,\dots,\lambda_{1}$, and $v'_{l_{1}}$, $\mu'_{l_{2}}$ might be of length 0. Inspired by this, let \smash{$\{i(k,l)\}_{l=1}^{\lambda_{k}-\lambda_{k+1}}$} be the length of (distinct) columns of $v$, \smash{$\{j(k,l)\}_{l=1}^{\lambda_{k}-\lambda_{k+1}}$} be the length of (distinct) columns of $\mu$, which are chosen to contribute to \smash{$\lambda'_{\lambda_{k+1}+1},\dots,\lambda'_{\lambda_{k}}$}. Then $i(k,l)+j(k,l)=k$ for each $l$.

We immediately see that the above way to choose $\{i(k,l)\}$, $\{j(k,l)\}$ satisfy~\eqref{eq_combcounting}, whose total number is equal to \smash{$C^{v',\mu'}_{\lambda'}$}. It remains to check that each way of choosing $\{i(k,l)\}$, $\{j(k,l)\}$ can be interpreted in this way. Given a sequence of nonnegative integers $\{i(k,l)\}$, $\{j(k,l)\}$ satisfying~\eqref{eq_combcounting}, we have for $n=1,2,\dots,N$,
\begin{equation}\label{eq_combcounting2}
 v_{n}-v_{n+1}=\sum_{k=1}^{N}\sum_{l=1}^{\lambda_{k}-\lambda_{k+1}}\mathbf{1}_{i(k,l)=n}, \qquad
 \mu_{n}-\mu_{n+1}=\sum_{k=1}^{N}\sum_{l=1}^{\lambda_{k}-\lambda_{k+1}}\mathbf{1}_{i(k,l)= n}.
\end{equation}
Then one can split \smash{$\{i(k,l)\}_{l=1}^{\lambda_{k}-\lambda_{k+1}}$}, $k=1,2,\dots,N$, into disjoint groups, such that the number of elements in group $n$ is exactly $v_{n}-v_{n+1}$, which is equal to the number of length $n$ columns in $v$. Vice versa for \smash{$\{j(k,l)\}_{l=1}^{\lambda_{k}-\lambda_{k+1}}$}, $k=1,2,\dots,N$.
\end{proof}

\begin{proof}[Proof of Theorem~\ref{thm:lln}:]
The weak convergence to a delta function on polynomial test functions is equivalent to the statement that, given any arbitrary collection of polynomials $f_{1},\dots,f_{n}$ of N variables, we have
\begin{equation}\label{eq_lln1}
\lim_{\theta\rightarrow \infty}\E{\prod_{i=1
}^{n}f_{i}\bigl(\Vec{c^{2}}\bigr)}=\lim_{\theta\rightarrow\infty}\prod_{i=1}^{n}\bigl[\mathbb{E}\big[f_{i}\bigl(\Vec{c^{2}}\bigr)\big]\bigr].
\end{equation}

Since $\Vec{c}$ is symmetric in distribution, it suffices to consider symmetric polynomials in $\Lambda_{N}$, which can be generated (in the sense of algebra) by elementary symmetric functions $e_1,\dots,e_{N}$. Since~\eqref{eq_lln1} is multilinear in $f_{i}$, we reduce to showing for any positive integers $k_1,\dots,k_{N}$,
\begin{equation}\label{eq_lln2}
\lim_{\theta\rightarrow \infty}\E{\prod_{i=1
}^{N}e_{i}\bigl(\Vec{c^{2}}\bigr)^{k_{i}}}\stackrel{?}{=}\lim_{\theta\rightarrow\infty}\prod_{i=1}^{N}\bigl[\mathbb{E}\big[e_{i}\bigl(\Vec{c^{2}}\bigr)\big]\bigr]^{k_{i}}.
\end{equation}
Once we show this, the deterministic limit of $\Vec{c}$ will be an $N$-tuple $\Vec{\lambda}$,
such that $\EE\bigl[e_{i}(\Vec{c})\bigr]=e_{i}\bigl(\Vec{\lambda}\bigr)$ for all $i=1,2,\dots,N$. Then Theorem~\ref{thm:characteristicpoly} identifies \smash{$\Vec{\lambda}$} with roots of $P^{\theta}_{N,M}(z)$.

We connect the left side of~\eqref{eq_lln2} with Jack polynomials, using the following result \cite[Proposition~7.6]{St}:
\begin{equation}\label{eq_lln3}
\lim_{\theta\rightarrow \infty}P_{\lambda}(x_{1},\dots,x_{N};\theta)=\prod_{i=1}^{N}[e_{i}(x_{1},\dots,x_{N})]^{\lambda_{i}-\lambda_{i+1}}, \end{equation}
for any partition $\lambda$.
Then, let $\lambda_{i}=k_{i}+\dots+k_{N}$, the left side of~\eqref{eq_lln2} becomes the limit of~\smash{$\mathbb{E}\bigl[P_{\lambda}\bigl(\Vec{c^{2}};\theta\bigr)\bigr]$}, since each product of the \smash{$e_{i}\bigl(\Vec{c^{2}}\bigr)$} has bounded expectation for $\theta>0$ due to the fact that $\Vec{c}$ has bounded support.
Again by Proposition~\ref{prop:polyexpectation},
\begin{gather}
\E{P_{\lambda}\bigl(c_{1}^{2},\dots,c_{N}^{2};\theta\bigr)}\nonumber
\\
\qquad=\sum_{|v|+|\mu|=|\lambda|}\frac{H(\lambda)}{H(v)H(\mu)}\frac{\prod_{i=1}^{N}\Gamma(\theta (M-i+1))\Gamma(\theta (M-i+1)+\lambda_{i})}{\prod_{i=1}^{N}\Gamma(\theta (M-i+1)+v_{i})\Gamma(\theta (M-i+1)+\mu_{i})}\nonumber\\
\phantom{\qquad=}{}\times\frac{P_{\lambda}\bigl(1^{N};\theta\bigr)}{P_{v}\bigl(1^{N};\theta\bigr)P_{\mu}\bigl(1^{N};\theta\bigr)}C^{v,\mu}_{\lambda}(\theta)P_{v}\bigl(a_{1}^{2},
\dots,a_{N}^{2};\theta\bigr)P_{\mu}\bigl(b_{1}^{2},\dots,b_{N}^{2};\theta\bigr).\label{eq_lln6}
\end{gather}
Taking $\theta\rightarrow\infty$,
\[\frac{\prod_{i=1}^{N}\Gamma(\theta (M-i+1))\Gamma(\theta (M-i+1)+\lambda_{i})}{\prod_{i=1}^{N}\Gamma(\theta (M-i+1)+v_{i})\Gamma(\theta (M-i+1)+\mu_{i})}\longrightarrow \prod_{n=1}^{N}(M-n+1)^{\lambda_{n}-v_{n}-\mu_{n}},\]
and since by definition
\smash{$P_{v}(\vec{x};\theta)P_{\mu}(\vec{x};\theta)=\sum_{\lambda}C^{v,\mu}_{\lambda}(\theta)P_{\lambda}(\vec{x};\theta)$},
applying $\omega_{\theta}$ on both sides (c.f.\ the proof of Lemma~\ref{lem:automorphism}), and use the fact that (see \cite[Proposition~7.6]{St})
\begin{equation}\label{eq_lln4}
\lim_{\theta\rightarrow 0}P_{\lambda}(x_{1},\dots,x_{N};\theta)=m_{\lambda}(x_{1},\dots,x_{N}),
\end{equation}
we have
\begin{gather*}
C^{v,\mu}_{\lambda}(\theta)\frac{H(\lambda)}{H(v)H(\mu)}\longrightarrow C^{v',\mu'}_{\lambda'}\cdot \frac{\lim_{\theta\rightarrow \infty} H'(\lambda)}{\lim_{\theta\rightarrow \infty} H'(v)\cdot \lim_{\theta\rightarrow \infty} H'(\mu)}\\
\qquad=C^{v',\mu'}_{\lambda'}\cdot \frac{\prod_{s\in \lambda}(\lambda_{j}'-i+1)}{\prod_{s\in v}(\lambda_{j}'-i+1)\cdot \prod_{s\in \mu}(\lambda_{j}'-i+1)}.
\end{gather*}
Moreover, applying~\eqref{eq_lln3} again on \smash{$\frac{P_{\lambda}(1^{N};\theta)}{P_{v}(1^{N};\theta)P_{\mu}(1^{N};\theta)}P_{v}\bigl(a_{1}^{2},\dots,a_{N}^{2};\theta\bigr)P_{\mu}\bigl(b_{1}^{2},\dots,b_{N}^{2};\theta\bigr)$}, the right side of~\eqref{eq_lln6} goes to
\begin{gather}
\sum_{|v|+|\mu|=|\lambda|}C^{v',\mu'}_{\lambda'}\frac{\prod_{s\in \lambda}(\lambda_{j}'-i+1)}{\prod_{s\in v}(\lambda_{j}'-i+1)\cdot \prod_{s\in \mu}(\lambda_{j}'-i+1)}\frac{\prod_{i=1}^{N}\binom{N}{i}^{\lambda_{i}-\lambda_{i+1}}}{\prod_{i=1}^{N}\binom{N}{i}^{v_{i}-v_{i+1}}\prod_{i=1}^{N}\binom{N}{i}^{\mu_{i}-\mu_{i+1}}}\nonumber\\
\qquad\times\prod_{n=1}^{N}(M-n+1)^{\lambda_{i}-v_{i}-\mu_{i}}\prod_{i=1}^{N}\bigl[e_{i}\bigl(a_{1}^{2},\dots,a_{N}^{2}\bigr)\bigr]^{v_{i}-v_{i+1}}\prod_{i=1}^{N}
\bigl[e_{i}\bigl(b_{1}^{2},\dots,b_{N}^{2}\bigr)\bigr]^{\mu_{i}-\mu_{i+1}}.\label{eq_lln7}
\end{gather}

On the other hand, by Theorem~\ref{thm:characteristicpoly}, the right side of~\eqref{eq_lln2} is equal to
\begin{align}
\prod_{k=1}^{N}\bigl[\EE\bigl[e_{i}\bigl(\Vec{c^{2}}\bigr)\bigr]\bigr]^{\lambda_{k}-\lambda_{k+1}}
={}&\prod_{k=1}^{N}\left[\sum_{i+j=k}\frac{(N-i)!(N-j)!}{N!(N-k)!}\frac{(M-i)!(M-j)!}{M!(M-k)!}\right.\nonumber\\
&\left.{}\times e_{i}\bigl(a_{1}^{2},\dots,a_{N}^{2}\bigr)e_{j}\bigl(b_{1}^{2},\dots,b_{N}^{2}\bigr)\right]^{\lambda_{k}-\lambda_{k+1}}.\label{eq_lln8}
\end{align}
It remains to check that~\eqref{eq_lln7} is equal to~\eqref{eq_lln8}.

We open the bracket in~\eqref{eq_lln8}, and index each term in the inner sum with the same indices $(i(k,l),j(k,l))\colon l=1,2,\dots,\lambda_{k}-\lambda_{k+1}$, $k=1,2,\dots,N$ introduced previously. Then we have $i(k,l)+j(k,l)=k$ for each $l$. The product in~\eqref{eq_lln8} then becomes a finite sum, where each summand is a multiple of the form \[\prod_{n=1}^{N}\bigl[e_{n}\bigl(a_{1}^{2},\dots,a_{N}^{2}\bigr)\bigr]^{v_{n}-v_{n+1}}\prod_{n=1}^{N}\bigl[e_{n}\bigl(b_{1}^{2},\dots,b_{N}^{2}\bigr)\bigr]^{\mu_{n}-\mu_{n+1}},\] and for $n=1,2,\dots,N$,
\begin{gather*}
v_{n}-v_{n+1}=\sum_{k=1}^{N}\sum_{l=1}^{\lambda_{k}-\lambda_{k+1}}\mathbf{1}_{i(k,l)=n},\qquad
\mu_{n}-\mu_{n+1}=\sum_{k=1}^{N}\sum_{l=1}^{\lambda_{k}-\lambda_{k+1}}\mathbf{1}_{i(k,l)=n},\\
\lambda_{n}-\lambda_{n+1}=\sum_{k=1}^{N}\sum_{l=1}^{\lambda_{k}-\lambda_{k+1}}\mathbf{1}_{k=n},
\end{gather*}
which matches~\eqref{eq_combcounting2}. Hence it remains to match the coefficients, i.e., to show that
\begin{gather}
\sum_{i(k,l)+j(k,l)=k}\prod_{k=1}^{N}\prod_{l=1}^{\lambda_{k}-\lambda_{k+1}}\left[\frac{(N-i(k,l))!(N-j(k,l))!}{N!(N-k)!}\frac{(M-i(k,l))!(M-j(k,l))!}{M!(M-k)!}\right]\nonumber\\
\qquad\stackrel{?}{=}C^{v',\mu'}_{\lambda'}\frac{\prod_{s\in \lambda}[\lambda_{j}'-i+1]}{\prod_{s\in v}[\lambda_{j}'-i+1]\cdot \prod_{s\in \mu}[\lambda_{j}'-i+1]}\nonumber\\
\phantom{\qquad\stackrel{?}{=}}{}\times\frac{\prod_{i=1}^{N}\binom{N}{i}^{\lambda_{i}-\lambda_{i+1}}}{\prod_{i=1}^{N}\binom{N}{i}^{v_{i}-v_{i+1}}\prod_{i=1}^{N}\binom{N}{i}^{\mu_{i}-\mu_{i+1}}}
\prod_{n=1}^{N}(M-n+1)^{\lambda_{n}-v_{n}-\mu_{n}}.\label{eq_lln14}
\end{gather}

We first rewrite the left side
\[
\frac{(N-i(k,l))!(N-j(k,l))!}{(N-k)!N!}
=\prod_{n=1}^{N}(N-n+1)^{\mathbf{1}_{n\le k}-\mathbf{1}_{n\le i(k,l)}-\mathbf{1}_{n\le j(k,l)}},
\]
hence
\begin{gather*}
\prod_{k=1}^{N}\prod_{l=1}^{\lambda_{k}-\lambda_{k+1}}\left[\frac{(N-i(k,l))!(N-j(k,l))!}{(N-k)!N!}\frac{(M-i(k,l))!(M-j(k,l))!}{(M-k)!M!}\right]\\
\qquad=\prod_{n=1}^{N}(N-n+1)^{\sum_{k=1}^{N}\sum_{l=1}^{\lambda_{k}-\lambda_{k+1}}[\mathbf{1}_{n\le k}-\mathbf{1}_{n\le i(k,l)}-\mathbf{1}_{n\le j(k,l)}]}\\
\phantom{\qquad=}{}\times\prod_{n=1}^{N}(M-n+1)^{\sum_{k=1}^{N}\sum_{l=1}^{\lambda_{k}-\lambda_{k+1}}[\mathbf{1}_{n\le k}-\mathbf{1}_{n\le i(k,l)}-\mathbf{1}_{n\le j(k,l)}]}.
\end{gather*}
On the right side,
\begin{equation}
 \prod_{n=1}^{N}(M-n+1)^{\lambda_{n}-v_{n}-\mu_{n}}
 =\prod_{n=1}^{N}(M-n+1)^{\sum_{k=1}^{N}\sum_{l=1}^{\lambda_{k}-\lambda_{k+1}}(\mathbf{1}_{n\le k}-\mathbf{1}_{n\le i(k,l)}-\mathbf{1}_{n\le j(k,l)})} \label{eq_lln11}
\end{equation}
and
\[
 \binom{N}{i}=\prod_{n=1}^{N}(N-n+1)^{1-\mathbf{1}_{n\ge i+1}-\mathbf{1}_{n\ge N-i+1}}=\prod_{n=1}^{N}(N-n+1)^{\mathbf{1}_{n\le i}-\mathbf{1}_{n\ge N-i+1}},
\]
hence
\begin{gather}
\frac{\prod_{i=1}^{N}\binom{N}{i}^{\lambda_{i}-\lambda_{i+1}}}{\prod_{i=1}^{N}\binom{N}{i}^{v_{i}-v_{i+1}}\prod_{i=1}^{N}\binom{N}{i}^{\mu_{i}-\mu_{i+1}}}\nonumber\\
\qquad=\prod_{i=1}^{N}\binom{N}{i}^{\sum_{k=1}^{N}\sum_{l=1}^{\lambda_{k}-\lambda_{k+1}}[\mathbf{1}_{k=i}-\mathbf{1}_{i(k,l)=i}-\mathbf{1}_{i(k,l)=i}]}\nonumber\\
\qquad=\prod_{n=1}^{N}(N-n+1)^{\sum_{i=1}^{N}(\mathbf{1}_{n\le i}-\mathbf{1}_{n\ge N-i+1})(\sum_{k=1}^{N}\sum_{l=1}^{\lambda_{k}-\lambda_{k+1}}[\mathbf{1}_{k=i}-\mathbf{1}_{i(k,l)=i}
-\mathbf{1}_{i(k,l)=i}])}\nonumber\\
\qquad=\prod_{n=1}^{N}(N-n+1)^{\sum_{k=1}^{N}\sum_{l=1}^{\lambda_{k}-\lambda_{k+1}}
\text{\scriptsize$\begin{array}{@{}l@{}} \bigl([\mathbf{1}_{n\le k}-\mathbf{1}_{n\le i(k,l)}-\mathbf{1}_{n\le j(k,l)}]\\
{}+[\mathbf{1}_{n\ge N-i(k,l)+1}+\mathbf{1}_{n\ge N-j(k,l)+1}-\mathbf{1}_{n\ge N-k+1}]\bigr)\end{array}$}},\label{eq_lln12}
\end{gather}
where the last equality holds since
\begin{gather*}
\sum_{i=1}^{N}(\mathbf{1}_{n\le i}-\mathbf{1}_{n\ge N-i+1})(\mathbf{1}_{k=i}-\mathbf{1}_{i(k,l)=i}-\mathbf{1}_{i(k,l)=i})\\
\qquad=[\mathbf{1}_{n\le i}-\mathbf{1}_{n\le i(k,l)}-\mathbf{1}_{n\le j(k,l)}]+[\mathbf{1}_{n\ge N-i(k,l)+1}+\mathbf{1}_{n\ge N-j(k,l)+1}-\mathbf{1}_{n\ge N-k+1}].\end{gather*}

Finally,
\begin{gather}
 \prod_{n=1}^{N}(N-n+1)^{\sum_{k=1}^{N}\sum_{l=1}^{\lambda_{k}-\lambda_{k+1}}[-(\mathbf{1}_{n\ge N-i(k,l)+1}+\mathbf{1}_{n\ge N-j(k,l)+1}-\mathbf{1}_{n\ge N-k+1})]}\nonumber\\
 \qquad=\prod_{n=1}^{N}(N-n+1)^{\sum_{k=1}^{N}\sum_{l=1}^{\lambda_{k}-\lambda_{k+1}}[-(\mathbf{1}_{i(k,l)\ge N-n+1}-\mathbf{1}_{i(k,l)\ge N-n+1}+\mathbf{1}_{k\ge N-n+1})]}\nonumber\\
 \qquad=\prod_{n=1}^{N}n^{\sum_{k=1}^{N}\sum_{l=1}^{\lambda_{k}-\lambda_{k+1}}[\mathbf{1}_{n\le k}-\mathbf{1}_{n\le i(k,l)}-\mathbf{1}_{n\le j(k,l)}]}\nonumber\\
 \qquad=\frac{\prod_{j=1}^{\lambda_{1}}\lambda_{j}'!}{\prod_{j=1}^{\lambda_{1}}v_{j}'!\prod_{j=1}^{\lambda_{1}}\mu_{j}'!}
 =\frac{\prod_{s\in \lambda}[\lambda_{j}'-i+1]}{\prod_{s\in v}[\lambda_{j}'-i+1]\prod_{s\in \mu}[\lambda_{j}'-i+1]}.\label{eq_lln13}
\end{gather}
Formula \eqref{eq_lln14} follows from~\eqref{eq_lln11}, \eqref{eq_lln12}, \eqref{eq_lln13} and Proposition~\ref{prop:combcounting}.
\end{proof}

\subsection[Gaussian fluctuation for 1times M matrix]{Gaussian fluctuation for $\boldsymbol{1\times M}$ matrix}
Take $N=1$, so that $A$ and $B$ are two $1\times M$ matrices with singular values $a_{1}, b_{1}\ge 0$, and let $c_{1}=a_{1}\boxplus_{1,M}^{\theta}b_{1}$. When taking $\theta\rightarrow\infty$, Theorem~\ref{thm:lln} shows
\smash{$c_{1}^{2}\longrightarrow \lambda_{1}^{2}$}
in moments, where
\[
 \E{c_{1}^{2}}=\E{e_{1}\bigl(c^{2}\bigr)}=a_{1}^{2}+b_{1}^{2}=\lambda_{1}^{2}.
\]

Based on this result, we consider further the fluctuation of $c_{1}$ around $\lambda_{1}$ in the $\theta\rightarrow\infty$ regime, which turns out to be a Gaussian random variable under proper rescaling.
\begin{thm}\label{thm:fluctuation}
 For $a_{1},b_{1}\ge 0$, let $\lambda_{1}^{2}=a_{1}^{2}+b_{1}^{2}$, and $c_{1}=a_{1}\boxplus_{1,M}^{\theta}b_{1}$. As $\theta\rightarrow\infty$, we have
\begin{equation}\label{eq_fluctuation2}
\sqrt{\theta}\bigl(c_{1}^{2}-\lambda_{1}^{2}\bigr)\stackrel{d}{\longrightarrow} Z,
\end{equation}
where $Z\sim\mathcal{N}\bigl(0, \frac{2}{M}a_{1}^{2}b_{1}^{2}\bigr)$.
\end{thm}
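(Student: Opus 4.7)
The plan is to exploit the dramatic simplification available in the $M=1$ case: the type BC Bessel function reduces to a classical one-variable hypergeometric, and the functional equation (\ref{eq_additioindef}) becomes a scalar identity from which every moment of $c_1^2$ can be read off in closed form. Specifically, by Example \ref{ex:usualbessel} (equivalently the $M=1$ specialisation of Proposition \ref{prop:bessel}),
\begin{equation*}
\B(a;z;\theta,N) \;=\; \sum_{k\ge 0}\frac{(az/2)^{2k}}{k!\,(N\theta)_k} \;=\; {}_0F_1\!\bigl(N\theta;\,(az/2)^2\bigr).
\end{equation*}
Writing $u=(z/2)^2$ and matching Taylor coefficients in
${}_0F_1(N\theta;a_1^2u)\,{}_0F_1(N\theta;b_1^2u)=\sum_k \EE[c_1^{2k}]\,u^k/(k!(N\theta)_k)$ yields the explicit closed form
\begin{equation*}
\EE[c_1^{2k}]\;=\;\sum_{i+j=k}\binom{k}{i}\frac{(N\theta)_k}{(N\theta)_i\,(N\theta)_j}\,a_1^{2i}b_1^{2j}.
\end{equation*}
From this one immediately reads off $\EE[c_1^2]=\lambda_1^2$ (consistent with (\ref{eq_fluctuation1})) and $\mathrm{Var}(c_1^2)=2a_1^2b_1^2/(N\theta)$, which already pins down the variance of the candidate Gaussian limit.

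The most economical way to upgrade moment asymptotics to convergence in distribution is to identify $c_1^2$ with a shifted beta-type law. Let $W_\theta$ be supported on $[-1,1]$ with density proportional to $(1-w^2)^{N\theta-3/2}$; its even moments are $\EE[W_\theta^{2s}]=(1/2)_s/(N\theta)_s$. Binomially expanding $(\lambda_1^2+2a_1b_1 W_\theta)^k$ and integrating term by term against this beta weight reproduces the formula above, so by moment determinacy of compactly supported measures
\begin{equation*}
c_1^2\;\stackrel{d}{=}\;\lambda_1^2+2a_1b_1\,W_\theta.
\end{equation*}
As a by-product this confirms that for $M=1$ the generalized function $\mathfrak m$ is in fact a genuine probability measure (for $\theta$ in the relevant range), and for $\theta=\tfrac12,1,2$ it is precisely the ``cosine of the angle between two random unit vectors'' picture in $\R^N,\C^N,\mathbb H^N$. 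Finally, the classical Beta-to-Gaussian central limit theorem gives $\sqrt\theta\,W_\theta\stackrel{d}{\longrightarrow}\mathcal N(0,\,1/(2N))$ as $\theta\to\infty$ (which is a matter of checking that $\theta^s(1/2)_s/(N\theta)_s\to (2s)!/(4^s s! N^s)=(2s-1)!!/(2N)^s$), so multiplication by $2a_1b_1$ delivers Theorem \ref{thm:fluctuation}.

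The main obstacle I anticipate is the moment-matching identification in the second step. The cleanest route is via Gegenbauer's product formula for ${}_0F_1$ (equivalently the classical integral representation of the Bessel function of the first kind), which rewrites ${}_0F_1(N\theta;a_1^2u)\,{}_0F_1(N\theta;b_1^2u)$ as an integral of ${}_0F_1\!\bigl(N\theta;(a_1^2+b_1^2+2a_1b_1 t)u\bigr)$ against the beta weight $(1-t^2)^{N\theta-3/2}\,dt$; this identifies the two Bessel generating functions directly and circumvents any combinatorial bookkeeping. If one prefers to avoid any appeal to beta distributions, an alternative is a pure method-of-moments argument: expand $(N\theta)_k/[(N\theta)_i(N\theta)_j]$ as a power series in $1/(N\theta)$ and verify directly that the leading contribution to $\EE[(\sqrt\theta(c_1^2-\lambda_1^2))^{2m}]$ is exactly $(2m-1)!!\,(2a_1^2b_1^2/N)^m$, the $2m$-th moment of $\mathcal N(0,2a_1^2b_1^2/N)$. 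Tracking the cancellations order-by-order in $1/\theta$ is the technical heart of that alternative route.
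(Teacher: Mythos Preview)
Your proposal is correct and takes a genuinely different route from the paper. The paper proceeds by exactly the ``alternative'' you sketch at the end: it expands $\EE[(c_1^2-\lambda_1^2)^m]$ directly from the moment formula, then proves a combinatorial lemma (an alternating-sum identity showing $\sum_{p=0}^{l}\frac{(-1)^{l-p}}{(l-p)!p!}(z+p)_q$ vanishes for $q<l$ and equals $1$ for $q=l$) to force the only surviving monomials to be $a_1^{2l_1}b_1^{2l_2}$ with $l_1=l_2$, recovering the Gaussian moments. Only at the very end does it invoke a Bessel-product positivity result (their reference [MP], which is essentially Gegenbauer's formula) to certify that $c_1^2$ carries a genuine probability measure and hence deduce tightness. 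You instead front-load Gegenbauer to identify the law $c_1^2\stackrel{d}{=}\lambda_1^2+2a_1b_1W_\theta$ exactly, after which everything reduces to the elementary Beta-to-Gaussian limit. This buys you an explicit density for each finite $\theta>1/(2N)$, a one-line proof that $\mathfrak m$ is a probability measure in this case, and it bypasses the moment combinatorics entirely; the paper's moment-only approach is consistent with its general philosophy (no density is available for $M\ge 2$), but for $M=1$ your argument is strictly more efficient.
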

\begin{Remark}
 We expect the Gaussian fluctuation behavior of $\Vec{c}=\Vec{a}\boxplus_{N,M}^{\theta}\Vec{b}$ when $\theta\rightarrow\infty$, for general $N>1$, and we leave the generalization of Theorem~\ref{thm:fluctuation} as an open problem.
\end{Remark}
\begin{proof}
 We first show that the convergence holds in the sense of moments. By Proposition~\ref{prop:polyexpectation}, for all $l=1,2,\dots $
\[
\mathbb{E}\bigl[c_{1}^{2l}\bigr]=\sum_{k_{1}+k_{2}=l}\frac{l!}{k_{1}!k_{2}!}\frac{\Gamma(\theta M)\Gamma(\theta M+l)}{\Gamma(\theta M+k_{1})\Gamma(\theta M+k_{2})}a_{1}^{2k_{1}}b_{1}^{2k_{2}},
\]
since in~\eqref{eq_polyexpectation} $\lambda=(l,0,\dots )$, $v=(k_{1},0,\dots )$, $\mu=(k_{2},0,\dots )$ and $C^{v,\mu}_{\lambda}(\theta)\equiv 1$.

Then for $n\in \Z_{\ge 0}$,
\begin{gather*}
 \E{\bigl(c_{1}^{2}-\lambda_{1}^{2}\bigr)^{n}}\\
 \qquad=\sum_{k_{1}+k_{2}+k=n}\frac{(-1)^{k}n!}{(k_{1}+k_{2})!k!}\frac{(k_{1}+k_{2})!}{k_{1}!k_{2}!}\frac{\Gamma(\theta M)\Gamma(\theta M+k_{1}+k_{2})}{\Gamma(\theta M+k_{1})\Gamma(\theta M+k_{2})}a_{1}^{2k_{1}}b_{1}^{2k_{2}}\bigl(a_{1}^{2}+b_{1}^{2}\bigr)^{k}\\
 \qquad=\sum_{k_{1}+k_{2}+k_{3}+k_{4}=n}(-1)^{k}\frac{n!}{k_{1}!k_{2}!k_{3}!k_{4}!}\frac{\Gamma(\theta M)\Gamma(\theta M+k_{1}+k_{2})}{\Gamma(\theta M+k_{1})\Gamma(\theta M+k_{2})}a_{1}^{2k_{1}+2k_{3}}b_{1}^{2k_{2}+2k_{4}}.
\end{gather*}
This implies for fixed $l_{1}+l_{2}=n\ge 0$, coefficient of monomial \smash{$a_{1}^{2l_{1}}b_{1}^{2l_{2}}$} in $\mathbb{E}\bigl[\bigl[\sqrt{\theta}\bigl(c_{1}^{2}-\lambda_{1}^{2}\bigr)\bigr]^{n}\bigr]$ is
\begin{gather*}
 \sqrt{\theta}^{l_{1}+l_{2}}n!\sum_{k_{3}=0}^{l_{1}}\sum_{k_{4}=0}^{l_{2}}\frac{(-1)^{k_{3}}}{(l_{1}-k_{3})!k_{3}!}\frac{(-1)^{k_{4}}}{(l_{2}-k_{4})!k_{4}!}\frac{\Gamma(\theta M)\Gamma(\theta M+l_{1}+l_{2}-k_{3}-k_{4})}{\Gamma(\theta M+l_{1}-k_{3})\Gamma(\theta M+l_{2}-k_{4})}\nonumber\\
 \qquad=\sqrt{\theta}^{l_{1}+l_{2}}n!\sum_{k_{3}=0}^{l_{1}}\sum_{k_{4}=0}^{l_{2}}\frac{(-1)^{l_{1}-k_{3}}}{(l_{1}-k_{3})!k_{3}!}\frac{(-1)^{l_{2}-k_{4}}}{(l_{2}-k_{4})!k_{4}!}\frac{\Gamma(\theta M)\Gamma(\theta M+k_{3}+k_{4})}{\Gamma(\theta M+k_{3})\Gamma(\theta M+k_{4})}.\label{eq_fluctuation4}
\end{gather*}

It remains to match the above expression with moments of $Z$. Without loss of generality, assume that $l_{1}\ge l_{2}$ in~\eqref{eq_fluctuation4}, and we rewrite~\eqref{eq_fluctuation4} as
\begin{gather*}
\sqrt{\theta}^{l_{1}+l_{2}}n!\sum_{k_{4}=0}^{l_{2}}\frac{(-1)^{l_{2}-k_{4}}}{(l_{2}-k_{4})!k_{4}!}\frac{\Gamma(\theta M)}{\Gamma(\theta M+k_{4})}\\
\qquad\times\Bigg[\sum_{k_{3}=0}^{l_{1}}\frac{(-1)^{l_{1}-k_{3}}}{(l_{1}-k_{3})!k_{3}!}(\theta M+k_{3})(\theta M+k_{3}+1)\cdots (\theta M+k_{3}+k_{4}-1)\Bigg].
\end{gather*}
\begin{Claim}
Let $z$ be a formal variable. For $l\in \Z_{\ge 1}$, $q=0,1,2,\dots,l$, we have
\begin{equation}\label{eq_fluctuationlemma1}
 \sum_{p=0}^{l}\frac{(-1)^{(l-p)}}{(l-p)!p!}(z+p)(z+p+1)\cdots (z+p+q-1)=\mathbf{1}_{q=l}.
 \end{equation}
 \end{Claim}

\begin{proof} Expand the polynomial of $z$. For each coefficient of $z^{0}, z^{1}, z^{2},\dots, z^{q}$, it can be written as a polynomial of $p$ with degree at most $q$ with integer coefficients, and hence an integral linear combination of $p, p(p-1), p(p-1)(p-2),\dots, p(p-1)\cdots (p-q+1)$, so the left side of~\eqref{eq_fluctuationlemma1} becomes an integral linear combination of binomial sums of
\[
\sum_{p=q}^{l}\frac{(-1)^{l-p}}{(l-p)!(p-q)!}=(1+(-1))^{l-q}=\mathbf{1}_{q=l}.\tag*{\qed}
\] \renewcommand{\qed}{}
\end{proof}

By the above claim, the sum in the bracket is nonzero only when $k_{4}=l_{1}$, which implies $l_{1}=l_{2}$, $n=l_{1}+l_{2}=2l_{1}$, and~\eqref{eq_fluctuation4} becomes
\[\sqrt{\theta}^{2l_{1}}\frac{(2l_{1})!}{l_{1}!}\frac{1}{\theta M(\theta M+1)\cdots (\theta M+l_{1}-1)}.\]
Therefore, the odd moments of $\sqrt{\theta}\bigl(c_{1}^{2}-\lambda_{1}^{2}\bigr)$ are all zero, and the $2k^{\rm th}$ moment of $\sqrt{\theta}\bigl(c_{1}^{2}-\lambda_{1}^{2}\bigr)$ is equal to
\[\sqrt{\theta}^{2k}\frac{(2k)!}{k!}\frac{1}{\theta M(\theta M+1)\cdots (\theta M+k-1)}a_{1}^{2k}b_{1}^{2k},\]
which converges to
\[\frac{(2k)!}{k!}\frac{1}{M^{k}}a_{1}^{2k}b_{1}^{2k}=(2k-1)!!\left(\frac{2}{M}\right)^{k}a_{1}^{2k}b_{1}^{2k}\]
as $\theta\rightarrow\infty$.
This coincides with the moments of $Z\sim \mathcal{N}\bigl(0,\frac{2}{M}a_{1}^{2}b_{1}^{2}\bigr)$. By Example~\ref{ex:usualbessel} and \cite[Theorem 2]{MP}, which states that products of two usual Bessel functions can be written as a convex combination of Bessel functions, we have that for each $\theta>0$, $c_{1}^{2}$ is supported by a legitimate probability measure $\mu_{\theta}$. The convergence of second moment when $\theta\rightarrow\infty$ implies that $\{\mu_{\theta}\}_{\theta>0}$ are tight, hence~\eqref{eq_fluctuation2} follows from the moment convergence.
\end{proof}

\section{Law of large numbers in high temperature}\label{sec:lln}
In this section, fix two parameters $\gamma>0, q\ge 1$. We explore the behavior of empirical measures of an $N\times M$ random matrix $C$, in the regime that taking $N,M\rightarrow\infty$, $\theta\rightarrow 0$, $N\theta\rightarrow\gamma$, $M\theta\rightarrow q\gamma$. To simplify the notation, sometimes we only write $N\theta\rightarrow\gamma$, $M\theta\rightarrow q\gamma$ to denote the same regime.

\subsection{Main results}

Consider an $N$-tuple of real numbers $\Vec{c}=(c_{1}\ge \dots \ge c_{N}\ge 0)$, which should be thought of as singular values of some (virtual) rectangular matrix. Suppose that there is a sequence of random $N$-tuples $\{\Vec{c}_{N}\}_{N=1}^{\infty}$ where $\Vec{c}_{N}=(c_{N,1}\ge\dots \ge c_{N,N}\ge 0)$, and the distribution of $\Vec{c}_{N}$ is given in the sense as in Theorem~\ref{thm:dunklonbgf}. Denote its empirical measure by \smash{$\mu_{N}=\frac{1}{N}\sum_{i=1}^{N}(\delta_{c_{N,i}}+\delta_{-c_{N,i}})$}.

We set up a condition in terms of moments, that under some mild technical assumption, is equivalent to the weak convergence, in probability, of the random empirical measures $\{\mu_{N}\}$ to some limiting probability measure $\mu$ when $N\rightarrow \infty$. The moments of $\mu$ are all finite and denoted as $m_{k}$.

\begin{Definition}[LLN]\label{def:llnsatisfaction}
 Let $\{\Vec{c}_{N}\}_{N=1}^{\infty}$ be a sequence of random $N$-tuples defined as above. For $k=1,2,\dots$, denote
 \[p^{N}_{k}=\frac{1}{2N}\sum_{i=1}^{N}\bigl[c_{N,i}^{k}+(-c_{N,i})^{k}\bigr].\]
 We say $\{\Vec{c}_{N}\}$ \emph{satisfies a law of large numbers}, if there exists deterministic real numbers $\{m_{k}\}_{k=1}^{\infty}$ such that for any $s=1,2,\dots$ and any $k_{1},\dots,k_{s}\in \Z_{\ge 1}$, we have
 \begin{equation}\label{eq_rectangularlln}
 \lim_{N\rightarrow \infty}\E{\prod_{i=1}^{s}p_{k_{i}}^{N}}=\prod_{i=1}^{s}m_{k_{i}}.
 \end{equation}
\end{Definition}

Denote the Bessel generating function of $\Vec{c}_{N}$ by
\[G_{N,M;\theta}(x_{1},\dots,x_{N}):=G_{N,\theta}(x_{1},\dots,x_{N};\mathfrak{m}_{\Vec{c}_{N}}).\]
Recall from Section~\ref{sec:bgf} that \smash{$G_{N,M;\theta}(0,\dots,0)=1$}, and \smash{$G_{N,M;\theta}(x_{1},\dots,x_{N})$} is analytic on a~domain near $(0,\dots,0)$. Under these conditions, $\ln(G_{N,M;\theta}(x_{1},\dots,x_{N}))$ is analytic near ${(0,\dots,0)}$, and $\ln(G_{N,M;\theta}(0,\dots,0))=0$.

Next, we introduce a condition of the partial derivatives of ln$(G_{N,M;\theta}(x_{1},\dots,x_{N}))$ at 0, as~${N\rightarrow\infty}$.

\begin{Definition}[$q$-$\gamma$-LLN-appropriateness]\label{def:llnappropriateness}
Given the sequence $\{\Vec{c}_{N}\}_{N=1}^{\infty}$, if for a sequence of real numbers $\{\kappa_{l}\}_{l=1}^{\infty}$, the following limits hold:
\begin{itemize}\itemsep=0pt
\item[(a)] \smash{$ \lim_{N\theta\rightarrow\gamma, M\theta\rightarrow q\gamma}\frac{\partial^{l}}{\partial x_{i}^{l}}\ln(G_{N,M;\theta})\bigr|_{x_{1}=\dots =x_{N}=0}=(l-1)!\cdot \kappa_{l}$}, for all $l\in\Z_{\ge 1}$ and $i\in \{1,2,\dots,\allowbreak N\}$.
\item[(b)] \smash{$ \lim_{N\theta\rightarrow\gamma, M\theta\rightarrow q\gamma}\frac{\partial}{\partial x_{i_{1}}}\cdots \frac{\partial}{\partial x_{i_{r}}}\ln(G_{N,M;\theta})\bigr|_{x_{1}=\dots =x_{N}=0}=0$}, for all $r\ge 2$, and $i_{1},\dots,i_{r}\in \{1,2,\dots,N\}$ such that the set $\{i_{1},\dots,i_{r}\}$ is of cardinality at least two.
\end{itemize}
 We say $\{\kappa_{l}\}_{l=1}^{\infty}$ are the limiting $q$-$\gamma$ cumulants of $\{\Vec{c}_{N}\}$.
\end{Definition}
\begin{Remark}
 Recall from Definition~\ref{def:bessel} that $B(\vec{a},x_{1},\dots,x_{N};\theta,N,M)$ is an infinite sum of even-degree polynomials, then so is $G_{\theta,N,M}$ and $\ln(G_{\theta,N,M})$. Therefore, it is immediate that $\kappa_{l}$ are always 0 for all odd $l$.
\end{Remark}
\begin{Remark}
 Writing
 \[g^{\theta,N,M}(z)=\frac{\partial}{\partial z}\ln(G_{N,M;\theta}(z,0,\dots,0))=\sum_{l=1}^{\infty}k^{\theta,N,M}_{l}z^{l-1},\]
 we have \smash{$k^{\theta,N,M}_{l}\longrightarrow \kappa_{l}$}
as $N\theta\rightarrow\gamma$, $M\theta\rightarrow q\gamma$.
\end{Remark}

Our main theorem connects Definitions~\ref{def:llnsatisfaction}, \ref{def:llnappropriateness} and gives a quantitative relation between moments and $q$-$\gamma$ cumulants of the limiting empirical measure of $\{\mu_{N}\}_{N=1}^{\infty}$, which is stated using generating function. Consider $\R[[z]]$, the space of all formal power series of variable $z$ with real coefficients.
\begin{Definition}
 Let $a(z)$ be an element in $\R[[z]]$. We define four linear operators acting on~$\R[[z]]$ to itself, such that for any $n=0,1,2,\dots $,
 \begin{itemize}\itemsep=0pt
 \item[(1)] $\partial(z^{n}):=n\cdot z^{n-1}$,
 \item[(2)] $d(z^{n}):=\begin{cases}
 0, & n=0, \\
 z^{n-1}, & n\ge 1,\\
\end{cases}$
 \item[(3)] $d'(z^{n}):=\begin{cases}
 0 ,& \text{$n$ is even}, \\
 2z^{n-1}, & \text{$n$ is odd},\\
\end{cases}$
 \item[(4)] $*_{a}(z^{n}):=a(z)\cdot z^{n}$.
 \end{itemize}
\end{Definition}

\begin{Definition}\label{def:operators}
 Let $\mathrm{T_{\kappa\rightarrow m}^{q,\gamma}}\colon \R^{\infty}\rightarrow \R^{\infty}$
be an operation sending a countable sequence $\{\kappa_{l}\}_{l=1}^{\infty}$ to another sequence $\{m_{2k}\}_{k=1}^{\infty}$ such that for each $k=1,2,\dots $,
 \begin{equation}\label{eq_cumulanttomoment}
 m_{2k}=\bigl[z^{0}\bigr]\left(\partial+2\gamma d+\left((q-1)\gamma-\frac{1}{2}\right)d'+*_{g}\right)^{2k-1}g(z),
 \end{equation}
where $\bigl[z^{0}\bigr]$ takes the constant term of the formal power series in $\R[[z]]$, and
\[g(z)=\sum_{l=1}^{\infty}\kappa_{l}z^{l-1}.\]
\end{Definition}
\begin{Remark}
 Note by a simple induction on $k=1,2,\dots $ that~\eqref{eq_cumulanttomoment} implies each $m_{2k}$ is given~by a positive constant times $\kappa_{2k}+ $ a polynomial of $\kappa_{2},\kappa_{4},\dots,\kappa_{2k-2}$.
 Hence, $\mathrm{T_{\kappa\rightarrow m}^{q,\gamma}}$ is an invertible map, such that given a sequence of real numbers $\{m_{2k}\}_{k=1}^{\infty}$, there exists a unique real sequence~$\{\kappa_{l}\}_{l=1}^{\infty}$ with $\kappa_{l}=0$ for all odd $l$, and $\mathrm{T}_{\kappa\rightarrow m}^{q,\gamma}\left(\{\kappa_{l}\}_{l=1}^{\infty}\right)=\{m_{2k}\}_{k=1}^{\infty}$. More precisely, $\{m_{2j}\}_{j=1}^{l}$ are corresponding to $\{\kappa_{j}\}_{j=1}^{2l}$. We denote the inverse map by
$\{\kappa_{l}\}=\mathrm{T_{m\rightarrow \kappa}^{q,\gamma}}(\{m_{2k}\})$.
 In Section~\ref{sec:momentcumulant}, we provide various points of views on the maps $\mathrm{T}_{\kappa\rightarrow m}^{q,\gamma}$ and $\mathrm{T}_{m\rightarrow \kappa}^{q,\gamma}$.
\end{Remark}
We are ready to present the main result now.
\begin{thm}[convergence of empirical measure in high temperature]\label{thm:hightemperaturemainthm}
 The sequence of random $N$-tuples $\{\Vec{c}_{N}\}_{N=1}^{\infty}$ satisfies LLN, if and only if it is $q$-$\gamma$-LLN-appropriate.

 If this occurs, we have
 $
 \{m_{2k}\}_{k=1}^{\infty}=\mathrm{T}_{\kappa\rightarrow m}^{q,\gamma}(\{\kappa_{l}\}_{l=1}^{\infty})$,
 where $\{\kappa_{l}\}_{l=1}^{\infty}$ are the $q$-$\gamma$ cumulants corresponding to $\{m_{2k}\}_{k=1}^{\infty}$.
\end{thm}
\subsection{Asymptotic expression under Dunkl actions}
The proof of Theorem~\ref{thm:hightemperaturemainthm} is relying on the actions of Dunkl operator introduced in Section~\ref{sec:dunkl} on Bessel generating functions. Before proceeding to the proof, we first study the explicit expression of this action in detail.

Consider a symmetric function $F(x_{1},\dots,x_{N})$ which is analytic on a complex domain near 0. Then the Taylor expansion of $F$ of $2k^{\rm th}$ order is
\begin{equation}\label{eq_Taylorpolynomial}
 F(x_{1},\dots,x_{N})=\sum_{\lambda\colon |\lambda|\le 2k,\, l(\lambda)\le N}c^{\lambda}_{F}\cdot m_{\lambda}(\vec{x})+O\bigl(||x||^{2k+1}\bigr),
\end{equation}
where $m_{\lambda}(\vec{x})$ is the monomial symmetric polynomial indexed by $\lambda$. If we further assume $F$ to be a symmetric function in $x_{1}^{2},\dots,x_{N}^{2}$, then
\begin{equation}\label{eq_doublecolumn}
 c^{\lambda}_{F} \text{ is\ nonzero\ only\ if}\ \lambda\ \text{is\ even.}
\end{equation}

Fix $N\ge 1$. Recall we denote
$P_{k}=D_{1}^{k}+\dots+D_{N}^{k}$,
where $D_{i}$ is defined in Section~\ref{sec:dunkl}.

The following theorem is a technical result on the explicit expansion of $\exp(F(x_{1},\dots,x_{N}))$ under the action of $P_{k}$, and it serves as a stepping stone to the proof of Theorem~\ref{thm:hightemperaturemainthm}.
\begin{thm}\label{thm:technicalequivalence}
 Fix $k=1,2,\dots $ and an even partition $\lambda$ and $|\lambda|=2k$. Let $F(x_{1},\dots,x_{N})$ be a symmetric function on $\R^{N}$ satisfying equation~\eqref{eq_doublecolumn}, analytic on a domain near $(0,\dots,0)$ and~${F(0,\dots,0)=0}$. Then
 \begin{gather}
 N^{-l(\lambda)}\left[\prod_{i=1}^{l(\lambda)}P_{\lambda_{i}}\right]\exp(F(x_{1},\dots,x_{N}))\bigr|_{x_{1}=\dots =x_{N}=0}\nonumber\\
 \qquad=b^{\lambda}_{\lambda}\cdot c^{\lambda}_{F}+\sum_{\mu\colon |\mu|=2k,\,l(\mu)>l(\lambda)}b^{\lambda}_{\mu}\cdot c^{\mu}_{F}+L\bigl(c_{F}^{(i)},1\le i\le 2k-1\bigr)\nonumber\\
 \qquad\phantom{=}{}+R_{1}\bigl(c^{v}_{F},|v|<2k\bigr)+N^{-1}R_{2}\bigl(c^{v}_{F},|v|\le 2k\bigr),\label{eq_dunklaction}
 \end{gather}
 where $b^{\lambda}_{\mu}$ are coefficients that are uniformly bounded in the limit regime $N\rightarrow\infty$, $M\rightarrow\infty$, $\theta\rightarrow0$, $N\theta\rightarrow\gamma$, $M\theta\rightarrow q\gamma$, and the notation $(i)$ denotes the partition $(i,0,\dots,0)$.
 In particular,
 \begin{align*}
 \lim_{N\theta\rightarrow \gamma,M\theta\rightarrow q\gamma}b^{\lambda}_{\lambda}={}&\prod_{i=1}^{l(\lambda)}[\lambda_{i}(\lambda_{i}-2+2q\gamma)(\lambda_{i}-2+2\gamma)(\lambda_{i}-4-2q\gamma)\\
 &\times(\lambda_{i}-4+2\gamma)\cdots (2+2q\gamma)
(2+2\gamma)2q\gamma ],
 \end{align*}
 and
 \begin{align}
 L\big(c^{(i)}_{F},1\le i\le 2k-1\big)={}&\prod_{i=1}^{l(\lambda)}\left(\bigl[z^{0}\bigr]\left(\partial+2\gamma d+\left((q-1)\gamma-\frac{1}{2}\right)d'+*_{g}\right)^{\lambda_{i}-1}g(z)\right)\nonumber\\
 &-2k(2k-2+2q\gamma)(2k-2+2\gamma)(2k-4-2q\gamma)\nonumber\\
 &\phantom{-}{}\times(2k-4+2\gamma)\cdots (2+2q\gamma)(2+2\gamma)2q\gamma\cdot c^{(2k)}_{F}\cdot \mathbf{1}_{l(\lambda)=1}.\label{eq_L}
 \end{align}
 The operator $\partial$, $d$, $d'$ and $*_{g}$ are defined in the same way as in Definition~{\rm\ref{def:operators}}, and
 \[g(z):=\sum_{n=1}^{\infty}nc^{(n)}_{F}z^{n-1}.\]
 Here, $L$, $R_{1}$ and $R_{2}$ are all polynomials of the coefficients $c^{v}_{F}$, whose corresponding variables are
 given in the parenthesis, and the coefficient of each monomial is uniformly bounded in the limit regime. Moreover, each monomial in $R_{1}$ contains at least one $c^{v}_{F}$, where $l(v)\ge 2$.
 If we assign~$c^{v}_{F}$ with degree~$|v|$, each summand on the right of~\eqref{eq_dunklaction} is homogeneous of degree~$2k$.
\end{thm}
We postpone the proof of Theorem~\ref{thm:technicalequivalence} to next section, and using its result, we are able to prove Theorem~\ref{thm:hightemperaturemainthm}.

\begin{proof}[Proof of Theorem~\ref{thm:hightemperaturemainthm}]
We first assume the sequence $\{\Vec{c}_{N}\}_{N=1}^{\infty}$ is $q$-$\gamma$-appropriate, with limiting $q$-$\gamma$-cumulants $\{\kappa_{l}\}_{l=1}^{\infty}$. We need to show $\{\Vec{c}_{N}\}_{N=1}^{\infty}$ is satisfying LLN with moments $\{m_{2k}\}_{k=1}^{\infty}=\mathrm{T}_{\kappa\rightarrow m}^{q,\gamma}(\{\kappa_{l}\}_{l=1}^{\infty})$.

Denote the type BC Bessel generating function of $\Vec{c}_{N}$ by $G_{N,M;\theta}(x_{1},\dots,x_{N})$. By Theorem~\ref{thm:dunklonbgf}, the left side of~\eqref{eq_rectangularlln} before taking the limit is given by
\begin{equation}\label{eq_symmetricdunklonbessel}
 N^{-s}\Bigg(\prod_{i=1}^{s}P_{2k_{i}}\Bigg)G_{N,M;\theta}(x_{1},\dots,x_{N})\bigr|_{x_{1}=\dots =x_{N}=0}.
\end{equation}
For each $N=1,2,\dots $, without loss of generality, assume $k_{1}\ge k_{2}\ge\dots \ge k_{s}$ and identify $(2k_{1},\dots,2k_{s})$ with a partition $\lambda$. Also since $G_{N,M;\theta}$ is analytic on a domain near 0 and~${G_{N,M;\theta}(0,\dots,0)=1}$, there is a function $F_{N,M;\theta}(x_{1},\dots,x_{N})$ analytic near 0 and
\[
\exp(F_{N,M;\theta}(x_{1},\dots,x_{N}))=G_{N,M;\theta}(x_{1},\dots,x_{N}), \qquad F_{N,M;\theta}(0,\dots,0)=0.
\]
 We write $F_{N,M;\theta}$ in terms of its $2k^{\rm th}$ order Taylor polynomial
\[F_{N,M;\theta}(x_{1},\dots,x_{N})=\sum_{\mu:|\mu|\le 2k, l(\mu)\le N}c^{\lambda}_{F_{N,M;\theta}}\cdot m_{\mu}(\vec{x})+O\bigl(\big|\big|x^{2k+1}\big|\big|\bigr).\]

After the above identifications~\eqref{eq_symmetricdunklonbessel} satisfies the condition of Theorem~\ref{thm:technicalequivalence}. Then we turn it into the expression on the right of~\eqref{eq_dunklaction}, and take the limit $N\theta\rightarrow\gamma$, $M\theta\rightarrow q\gamma$. By $q$-$\gamma$-appropriateness,
\begin{equation*}
 \lim_{N\theta\rightarrow\gamma, M\theta\rightarrow q\gamma}c^{(n)}_{F_{N,M;\theta}}=\frac{\kappa_{n}}{n},\qquad \lim_{N\theta\rightarrow\gamma, M\theta\rightarrow q\gamma}c^{v}_{F_{N,M;\theta}}=0\qquad \text{if} \ l(v)>1.
\end{equation*}
Hence \smash{$\sum_{v\colon|v|=2k,\, l(v)>l(\lambda)}b^{\lambda}_{v}\cdot c^{v}_{F_{N,M;\theta}}$} turns to 0, since each summand contains some term converging to 0, and
\[
b^{\lambda}_{\lambda}\cdot c^{\lambda}_{F_{N,M;\theta}}\longrightarrow \begin{cases}
 0&
 \text{if}\ s>1,\\
 (2k_{1}-2+2q\gamma)(2k_{1}-2+2\gamma)(2k_{1}-4-2q\gamma)\\
 \qquad\times(2k_{1}-4+2\gamma)\cdots (2+2q\gamma)(2+2\gamma)2q\gamma\cdot c^{(2k_{1})}_{F}\kappa_{2k_{1}}&
 \text{if}\ s=1.
\end{cases}
\]

The polynomial $L$ converges to
\begin{gather*}
\prod_{i=1}^{s}\left(\bigl[z^{0}\bigr]\left(\partial+2\gamma d+\left((q-1)\gamma-\frac{1}{2}\right)d'+*_{g}\right)^{2k_{i}-1}g(z)\right)-(2k_{1}-2+2q\gamma)\\
\qquad\times(2k_{1}-2+2\gamma)(2k_{1}-4-2q\gamma)(2k_{1}-4+2\gamma)
 \cdots (2+2q\gamma)(2+2\gamma)2q\gamma\cdot \kappa_{2k_{1}}\cdot \mathbf{1}_{s=1},
\end{gather*}
where $g(z)=\sum_{n=1}\kappa_{n}z^{n-1}$, since \smash{$\sum_{n=1}^{\infty}nc^{(n)}_{F_{N,M;\theta}}z^{n-1}$} converges coefficient-wise to $g(z)$.

The polynomial $R_{1}$ converges to 0, also because each summand contains some factor \smash{$c^{v}_{F_{N,M;\theta}}$} with $l(v)>1$, that vanishes in the limit regime. The polynomial $N^{-1}R_{2}$ vanishes as well in the limit since all its coefficients converge to 0. Combining all the results above gives
\[
\lim_{N\rightarrow \infty}\E{\prod_{i=1}^{s}p_{2k_{i}}^{N}}=\prod_{i=1}^{s}\left(\bigl[z^{0}\bigr]\left(\partial+2\gamma d+\left((q-1)\gamma-\frac{1}{2}\right)d'+*_{g}\right)^{2k_{i}-1}g(z)\right),\]
which is equal to $\prod_{i=1}^{s}m_{2k_{i}}$, where $\{m_{2k}\}_{k=1}^{\infty}=\mathrm{T}_{\kappa\rightarrow m}^{q,\gamma}(\{\kappa_{l}\}_{l=1}^{\infty})$. Hence the LLN condition of~${\{\Vec{c}_{N}\}_{N=1}^{\infty}}$ is proved.

Now we go in the opposite direction, that assuming $\{\Vec{c}_{N}\}_{N=1}^{\infty}$ satisfies LLN for some $\{m_{k}\}_{k=1}^{\infty}$, i.e., for all even partition $\lambda$,
\[
N^{-l(\lambda)}\left[\prod_{i=1}^{l(\lambda)}P_{\lambda_{i}}\right]\exp(F_{N,M;\theta}(x_1,\dots,x_{N}))|_{x_{1}=\dots =x_{N}=0}\xrightarrow[M\theta\rightarrow q \gamma]{N\theta \rightarrow \gamma}\prod_{i=1}^{l(\lambda)}m_{\lambda_{i}},
\]
where \smash{$F_{N,M;\theta}(x_{1},\dots,x_{N})=\sum_{\mu\colon l(\mu)\le N}c^{\mu}_{F_{N,M;\theta}}\cdot m_{\mu}(\vec{x})$} is an analytic function near 0 satisfying~${\exp(F_{N,M;\theta})=G_{N,M;\theta}}$. We need to show
\[
 c^{\lambda}_{F_{N,M;\theta}}\longrightarrow \begin{cases}
 0,& l(\lambda)>1\ \text{or $\lambda$ is not even},\\
 \frac{\kappa_{2k}}{2k},& \lambda=(2k)
 \end{cases}
\]
in the limit regime $N\theta\rightarrow\gamma, M\theta\rightarrow q\gamma$, where $\{\kappa_{l}\}_{l=1}^{\infty}=\mathrm{T_{m\rightarrow \kappa}^{q,\gamma}}(\{m_{2k}\}_{k=1}^{\infty})$.

Note that we only need to consider the case $|\lambda|$ is even, and \smash{$c^{v}_{F_{N,M;\theta}}=0$} for all $v$ not even, since~the type BC Bessel function of each $\Vec{c}_{N}$ is a symmetric function in $x_{1}^{2},\dots,x_{N}^{2}$, by Definition~\ref{def:bgf} and Proposition~\ref{prop:polyexpectation}. We proceed by induction on $|\lambda|$.

For $|\lambda|=0$, there is nothing to show. Suppose the result holds for all $|\lambda|\le 2k-2$, we now consider the partition that $|\lambda|=2k$. By Theorem~\ref{thm:technicalequivalence}, for each $\theta$, $N$, $M$, we have a (finite) system of linear equations of the unknowns \smash{$\{c^{\mu}_{F_{N,M;\theta}}\}_{\mu\colon |\mu|=2k, l(\mu)>l(\lambda), \mu \text{\ is\ even}}$}.
\begin{gather}
 b^{\lambda}_{\lambda}\cdot c^{\lambda}_{F_{N,M;\theta}}+\sum_{\mu\colon |\mu|=2k,\, l(\mu)>l(\lambda),\, \mu \text{\ is\ even}}b^{\lambda}_{\mu}\cdot c^{\mu}_{F_{N,M;\theta}}\nonumber\\
 \qquad= N^{-l(\lambda)}\left[\prod_{i=1}^{l(\lambda)}P_{\lambda_{i}}\right]\exp(F_{N,M;\theta}(x_{1},\dots,x_{N}))\bigr|_{x_{1}=\dots =x_{N}=0}
 -L\bigl(c_{F_{N,M;\theta}}^{(i)},1\le i\le 2k-1\bigr)\nonumber\\
 \phantom{\qquad=}{}-R_{1}(c^{v}_{F_{N,M;\theta}},|v|<2k)-N^{-1}R_{2}(c^{v}_{F_{N,M;\theta}},|v|\le 2k).\label{eq_linearsystem}
\end{gather}
We observe that if we write it in the matrix form in the lexicographical order of $\mu$ introduced in Section~\ref{sec:sympoly}, the above system is upper triangular, and again by Theorem~\ref{thm:technicalequivalence}, its diagonal entries~$b^{\mu}_{\mu}$ all converge to some nonzero constant in the limit regime, and the off-diagonal entries are uniformly bounded. Hence the matrix is invertible asymptotically, and its inverse has uniformly bounded entries.

\begin{Claim} If $\lambda\ne (2k)$, the right side of~\eqref{eq_linearsystem} converges to 0 in the limit regime.
\end{Claim}

\begin{proof} $R_{1}\rightarrow 0$ by induction hypothesis (recall that each of its term involves some partition $v$ with $l(v)\ge 2$), and $N^{-1}R_{2}\rightarrow 0$ since the coefficients all vanish in the limit.

By the LLN condition,
\[N^{-l(\lambda)}\left[\prod_{i=1}^{l(\lambda)}P_{\lambda_{i}}\right]\exp(F(x_{1},\dots,x_{N}))\bigr|_{x_{1}=\dots =x_{N}=0}\longrightarrow \prod_{i=1}^{l(\lambda)}m_{\lambda_{i}},\]
and by Theorem~\ref{thm:technicalequivalence} and Definition~\ref{def:operators}, when $\lambda\ne (2k)$, each $\lambda_{i}<2k$ and
\[
L\bigl(c_{F_{N,M;\theta}}^{(i)},1\le i\le 2k-1\bigr)=\prod_{i=1}^{l(\lambda)}m^{N,M;\theta}_{\lambda_{i}},
\]
where
\[
\bigl\{m^{N,M;\theta}_{k}\bigr\}_{k=1}^{\infty}=\mathrm{T}_{\kappa\rightarrow m}^{q,\gamma}\bigl(\bigl\{l\cdot c^{(l)}_{F_{N,M;\theta}}\bigr\}_{l=1}^{\infty}\bigr).
\]
By induction hypothesis \smash{$\bigl\{l\cdot c^{(l)}_{F_{N,M;\theta}}\bigr\}_{l=1}^{\infty}\longrightarrow\{\kappa_{l}\}_{l=1}^{\infty}$} pointwisely for $l<2k$, and hence $\smash{m^{N,M;\theta}_{j}}\rightarrow\allowbreak m_{j}$ pointwisely for $j<k$, and
\[L\bigl(c_{F_{N,M;\theta}}^{(i)},1\le i\le 2k-1\bigr)\longrightarrow \prod_{i=1}^{l(\lambda)}m_{\lambda_{i}}\ \]
as well. \end{proof}

Because of this claim, we conclude that when $N\theta\rightarrow\gamma$, $M\theta\rightarrow q\gamma$, the solutions of the linear system where $|\lambda|=2k$, $\lambda\ne (2k)$ converge to the zero vector, in particular,
\begin{equation}\label{eq_llnapproxexceptonerowYD}
 c^{\lambda}_{F_{N,M;\theta}}\longrightarrow 0\qquad
\text{for\ all}\quad |\lambda|=2k,\qquad \lambda\ne (2k).
\end{equation}

It remains to consider $\lambda=(2k)$. This time we write down a single identity
\begin{gather*}
 b^{(2k)}_{(2k)}\cdot c^{(2k)}_{F_{N,M;\theta}}+\sum_{v\colon |v|=2k,\, l(v)>1,\, \mu\ \text{is\ even}}b^{(2k)}_{v}\cdot c^{v}_{F_{N,M;\theta}}\\
 \qquad= N^{-1}P_{2k}\left[\exp(F_{N,M;\theta}(x_{1},\dots,x_{N})\right]\bigr|_{x_{1}=\dots =x_{N}=0}
 -L\bigl(c_{F_{N,M;\theta}}^{(i)},1\le i\le 2k-1\bigr)\\
 \phantom{\qquad=}{}-R_{1}(c^{v}_{F_{N,M;\theta}},|v|<2k)-N^{-1}R_{2}(c^{v}_{F_{N,M;\theta}},|v|\le 2k).
\end{gather*}
We have that
\[
\mathrm{LHS}=b^{(2k)}_{(2k)}\cdot c^{(2k)}_{F_{N,M;\theta}}+o(1),
\]
where \smash{$g_{N,M;\theta}(z)=\sum_{n=1}^{\infty}nc^{(n)}_{F_{N,M;\theta}}z^{n-1}$},
 because of Theorem~\ref{thm:technicalequivalence} and~\eqref{eq_llnapproxexceptonerowYD}.
 And
\begin{align*}
\mathrm{RHS}={}&m_{2k}-\bigl[z^{0}\bigr]\left(\partial+2\gamma d+\left((q-1)\gamma-\frac{1}{2}\right) d'+*_{g_{N,M;\theta}}\right)^{2k-1}g_{N,M;\theta}(z)\\
&+b^{(2k)}_{(2k)}\cdot c^{(2k)}_{F_{N,M;\theta}}+o(1)
\end{align*}
because of Theorem~\ref{thm:technicalequivalence} and the
 LLN assumption. Hence, when $N\theta\rightarrow\gamma$, $M\theta\rightarrow q\gamma$,
 \[
 \bigl[z^{0}\bigr]\Big(\partial+2\gamma d+\left((q-1)\gamma-\frac{1}{2}\right) d'+*_{g_{N,M;\theta}}\Big)^{2k-1}g_{N,M;\theta}(z)\longrightarrow m_{2k}.
 \]
 By Definition~\ref{def:operators}, the invertibility of $\mathrm{T}_{\kappa\rightarrow m}^{q,\gamma}$ and the induction hypothesis, this is equivalent to
 \[
 (2k)\cdot c^{(2k)}_{F_{N,M;\theta}}\longrightarrow \kappa_{2k}
 \]
 in the limit regime, that $\kappa_{2k}$ is in the image of $\mathrm{T}_{m\rightarrow \kappa}^{q,\gamma}(\{m_{2j}\}_{j=1}^{\infty})$. This finishes the induction step and therefore the proof.
 \end{proof}

\subsection{Proof of Theorem~\ref{thm:technicalequivalence}}
We start by reducing $F(x_{1},\dots,x_{N})$ from a (locally) analytic function to its $2k^{\rm th}$ Taylor polynomial.
\begin{Lemma}\label{lem:reducetopoly}
 For $F(x_{1},\dots,x_{N})$ of the form~\eqref{eq_Taylorpolynomial}, denote
 \[
 F{'}(x_{1},\dots,x_{N})=\sum_{\lambda\colon|\lambda|\le 2k,\, l(\lambda)\le N}c^{\lambda}_{F}\cdot m_{\lambda}(\vec{x}),
 \]
 the truncation of $F(x_{1},\dots,x_{N})$. Then for a partition $\lambda$ with $|\lambda|=2k$, we have
 \[
 \Bigg[\prod_{i=1}^{l(\lambda)}P_{\lambda_{i}}\Bigg]\exp(F'(x_{1},\dots,x_{N}))\bigr|_{x_{1}=\dots =x_{N}=0}=\Bigg[\prod_{i=1}^{l(\lambda)}P_{\lambda_{i}}\Bigg]\exp(F(x_{1},\dots,x_{N}))\bigr|_{x_{1}=\dots =x_{N}=0}.
 \]
\end{Lemma}
\begin{proof}
 Since $F$ is analytic near 0, write $\exp(F(x_{1},\dots,x_{N}))$ and $\exp(F'(x_{1},\dots,x_{N}))$ as symmetric power series. Their difference $R(\vec{x})$ is a power series of order $O\bigl(||x||^{2k+1}\bigr)$. Since \smash{$\prod_{i=1}^{l(\lambda)}P_{\lambda_{i}}$} is a homogeneous polynomial of $D_{i}$ and each $D_{i}$ reduces the total power of a monomial by 1,
 \[
 \left[\prod_{i=1}^{l(\lambda)}P_{\lambda_{i}}\right]R(\vec{x})\bigr|_{x_{1}=\dots =x_{N}=0}=0.\tag*{\qed}
 \]\renewcommand{\qed}{}
\end{proof}

By Lemma~\ref{lem:reducetopoly}, in the remainder of this section we take
\[F(x_{1},\dots,x_{N})=\sum_{\lambda\colon |\lambda|\le 2k,\, l(\lambda)\le N,\, \lambda\ \text{even}}c^{\lambda}_{F}\cdot m_{\lambda}(\vec{x}).\]
\smash{$\prod_{i=1}^{l(\lambda)}P_{\lambda_{i}}$} is a sum of products of $D_{i}$, $i=1,2,\dots,N$. For each product of the form $D_{1}^{m_{1}}\cdots D_{N}^{m_{N}}$ acting on $\exp(F(x_{1},\dots,x_{N}))$, by~\eqref{prop:commutativity} the order does not matter.
Recall from Definition~\ref{def:dunkl} that
\[D_{i}=\partial_{i}+\left[\theta(M-N+1)-\frac{1}{2}\right]\frac{1-\sigma_{i}}{x_{i}}+\theta\sum_{j\ne i}\left[\frac{1-\sigma_{ij}}{x_{i}-x_{j}}+\frac{1-\tau_{ij}}{x_{i}+x_{j}}\right].\]
Observe that $D_{i}[\exp(F(x_{1},\dots,x_{N}))]$ is of the form $H(x_{1},\dots,x_{N})\exp(F(x_{1},\dots,x_{N}))$, where $H(x_{1},\dots,x_{N})$ is a polynomial of $x_{1},\dots,x_{N}$, and for any i, $D_{i}[H(x_{1},\dots,x_{N})\exp(F(x_{1},\dots,x_{N}))]$ is still $\exp(F(x_{1},\dots,x_{N}))$ multiplied by a polynomial. More precisely,
\begin{gather}
 \partial_{i}[H(x_{1},\dots,x_{N})\exp(F(x_{1},\dots,x_{N}))]\nonumber\\
 \qquad=(\partial_{i}H(x_{1},\dots,x_{N})+H(x_{1},\dots,x_{N})\partial_{i} F(x_{1},\dots,x_{N}))\cdot \exp(F(x_{1},\dots,x_{N})),\label{eq_hz1}
\\
 \frac{1-\sigma_{i}}{x_{i}}[H(x_{1},\dots,x_{N})\exp(F(x_{1},\dots,x_{N}))]\nonumber\\
 \qquad=\left(\frac{1-\sigma_{i}}{x_{i}}H(x_{1},\dots,x_{N})\right)\cdot \exp(F(x_{1},\dots,x_{N})),\nonumber
\\
 \frac{1-\sigma_{ij}}{x_{i}-x_{j}}[H(x_{1},\dots,x_{N})\exp(F(x_{1},\dots,x_{N}))]\nonumber\\
 \qquad=\left(\frac{1-\sigma_{ij}}{x_{i}-x_{j}}H(x_{1},\dots,x_{N})\right)\cdot \exp(F(x_{1},\dots,x_{N})),\nonumber
\\
 \frac{1-\tau_{ij}}{x_{i}+x_{j}}[H(x_{1},\dots,x_{N})\exp(F(x_{1},\dots,x_{N}))]\nonumber\\
 \qquad=\left(\frac{1-\tau_{ij}}{x_{i}+x_{j}}H(x_{1},\dots,x_{N})\right)\cdot \exp(F(x_{1},\dots,x_{N})).\label{eq_hz4}
\end{gather}
 We see that
 \[
 \prod_{i=1}^{l(\lambda)}\big[D_{1}^{m_{1}}\cdots D_{N}^{m_{N}}\big]\exp(F(x_{1},\dots,x_{N}))|_{x_{1}=\dots =x_{N}=0}
 \]
 is obtained by acting a polynomial of $\partial_{i}$, $\frac{1-\sigma_{i}}{x_{i}}$, $\frac{1-\sigma_{ij}}{x_{i}-x_{j}}$, $\frac{1-\tau_{ij}}{x_{i}+x_{j}}$ on $F(x_{1},\dots,x_{N})$, then take the constant term. Then we have the following basic observation.
\begin{prop}\label{prop:unifombounded}
 For any $N$-tuple of nonnegative integers $m_{1},\dots,m_{N}$,
 \[
 \bigl[D_{1}^{m_{1}}\cdots D_{N}^{m_{N}}\bigr]\exp(F(x_{1},\dots,x_{N}))\big|_{x_{1}=\dots =x_{N}=0}
 \]
 is a homogeneous polynomial in $c^{v}_{F}$ of degree $\sum_{i=1}^{N}m_{i}$, if we take $c^{v}_{F}$ to be of degree $|v|$. Moreover, the coefficients of this polynomial are all uniformly bounded in the limit regime $N\theta\rightarrow\gamma$, ${M\theta\rightarrow q\gamma}$.
\end{prop}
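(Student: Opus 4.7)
The plan is to prove both claims by induction on $|n| := n_1 + \cdots + n_M$. By Proposition \ref{prop:commutativity} the $D_i$'s commute, and by formulas (\ref{eq_hz1})--(\ref{eq_hz4}) applying $D_i$ to any $H(z)\exp(F(z))$ with $H$ a polynomial yields another polynomial times $\exp(F)$. Writing $D_1^{n_1}\cdots D_M^{n_M}\exp(F) = H_n(z_1,\ldots,z_M)\,\exp(F(z_1,\ldots,z_M))$, the two claims reduce to statements about $H_n$ evaluated at $z_1 = \cdots = z_M = 0$.

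For the homogeneity claim, I will use a weighting argument: assign weight $|v|$ to each formal variable $c^v_F$ and weight $-1$ to each $z_i$. Then $F = \sum_v c^v_F\, m_v(z)$ is weight-homogeneous of weight $0$, and so is $\exp(F)$. Each of the four summands composing $D_i$ --- namely $\partial_i$, the scalar multiple of $(1-\sigma_i)/z_i$, and the $\theta$-weighted divided differences $(1-\sigma_{ij})/(z_i-z_j)$ and $(1-\tau_{ij})/(z_i+z_j)$ --- raises the weight by exactly $+1$ when applied to any polynomial in the $z$'s, since the scalar prefactors $\theta(N-M+1)-\tfrac{1}{2}$ and $\theta$ are weight-$0$ and each of the four operations strips one power of $z$. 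Thus $D_1^{n_1}\cdots D_M^{n_M}$ raises the weight by $|n|$, so $H_n$ is weight-homogeneous of weight $|n|$. Setting $z_1 = \cdots = z_M = 0$ isolates the part with $z$-degree $0$, which is a polynomial in the $c^v_F$'s that is homogeneous of degree $|n|$ under the $|v|$-grading.

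For uniform boundedness, let $\|P\|$ denote the sum of absolute values of the numerical coefficients of a polynomial $P$ in the variables $(z_i)$ and $(c^v_F)$. By Lemma \ref{lem:reducetopoly} we may assume $F$ has $z$-degree at most $2k$, so $\deg_z H_n \le |n|(2k-1)$ uniformly in $M,N,\theta$. The recursion (\ref{eq_hz1})--(\ref{eq_hz4}), together with the elementary fact that $(z_i^a z_j^b - z_i^b z_j^a)/(z_i - z_j)$ is a polynomial with $|a-b|$ unit-coefficient monomials (giving $\|(1-\sigma_{ij})H/(z_i-z_j)\| \le \deg_z(H)\,\|H\|$, and similarly for $(1-\tau_{ij})/(z_i+z_j)$ and $(1-\sigma_i)/z_i$), yields a bound
\begin{equation*}
\|H_{n+e_i}\| \;\le\; C\bigl(1 + \bigl|\theta(N-M+1)-\tfrac{1}{2}\bigr| + \theta(M-1)\bigr)\,\|H_n\|
\end{equation*}
for some constant $C = C(k, F)$. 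In the limit regime, $\theta(N-M+1) - \tfrac{1}{2} \to (q-1)\gamma - \tfrac{1}{2}$ and $\theta(M-1) \to \gamma$ are both bounded, so iterating $|n|$ times starting from $\|H_0\| = 1$ produces $\|H_n\| \le C'^{|n|}$ uniformly; evaluation at $z = 0$ only decreases $\|\cdot\|$.

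The main technical point is the handling of the $\theta \sum_{j \ne i}$ contributions, where the prefactor $\theta \to 0$ is offset by the growing number of summands $M - 1 \to \infty$. The rescue is that each divided-difference summand acts as a polynomial operator whose $\|\cdot\|$-contribution is bounded by $\deg_z H_n$ (not by any factor of $M$); combined with $M\theta \to \gamma$, the entire sum contributes at most $\theta(M-1)\deg_z H_n\,\|H_n\|$, which stays bounded in the limit. Once this observation is in place, the induction is routine.
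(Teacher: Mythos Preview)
Your homogeneity argument via the weight grading is correct and matches the paper's reasoning (each piece of $D_i$ lowers the $z$-degree by one, so the constant term picks out degree $|n|$ in the $c^v_F$'s).

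The uniform-boundedness argument, however, has a genuine gap. Your recursion
\[
\|H_{n+e_i}\| \le C\bigl(1 + |\theta(N-M+1)-\tfrac12| + \theta(M-1)\bigr)\|H_n\|
\]
hides the contribution $H_n\cdot\partial_i F$ from (\ref{eq_hz1}) inside the constant $C = C(k,F)$. But $F$ is a polynomial in $M$ variables, and $\|\partial_i F\|$ is \emph{not} bounded as $M\to\infty$: for any partition $v$ with $l(v)\ge 2$, the polynomial $\partial_i m_v(z_1,\ldots,z_M)$ has order $M^{l(v)-1}$ monomials (for instance $\partial_1 m_{(2,2)} = 2z_1\sum_{j\ne 1}z_j^2$ has $M-1$ terms). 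Already at the first step one has $H_1 = \partial_i F$ (the reflection pieces annihilate the symmetric $\exp F$), so $\|H_1\| \ge 2(M-1)$ while $\|H_0\| = 1$; your inequality would force $C\ge 2(M-1)$. Evaluation at $z=0$ does discard most of these monomials, but your $\ell^1$-norm tracks them all the way through, so the final bound $\|H_n\|\le C'^{|n|}$ cannot be made uniform in $M$.

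The paper's argument avoids this by a direct counting: expand $D_1^{n_1}\cdots D_M^{n_M}$ into products of the $2M$ single operators making up each $D_i$. A product containing $r$ factors of the form $\theta\frac{1-\sigma_{ij}}{z_i-z_j}$ or $\theta\frac{1-\tau_{ij}}{z_i+z_j}$ carries an explicit prefactor $\theta^r$, while the number of such products is $O(M^r)$ (from the $M-1$ choices of $j$ at each of the $r$ slots); since $M\theta\to\gamma$ these balance to $O(\gamma^r)$. The remaining factors, $\partial_i$ and $[\theta(N-M+1)-\tfrac12]\frac{1-\sigma_i}{z_i}$, have bounded scalar coefficients and no $j$-sum. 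The crucial point your norm misses is that once a specific product (with specific $j$'s) is fixed, it touches only boundedly many $z$-variables, so its constant term on $\exp(F)$ is a polynomial in the $c^v_F$ with coefficients independent of $M$; the $M$-growth lives entirely in the number of products and is exactly compensated by $\theta^r$.
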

\begin{proof}
 Each of $\partial_{i}$, \smash{$\frac{1-\sigma_{i}}{x_{i}}$}, \smash{$\frac{1-\sigma_{ij}}{x_{i}-x_{j}}$}, \smash{$\frac{1-\tau_{ij}}{x_{i}+x_{j}}$} reduces the degree of a monomial by~1, the constant term of
\smash{$\bigl[D_{1}^{m_{1}}\cdots D_{N}^{m_{N}}\bigr]\exp(F(x_{1},\dots,x_{N}))$} is then obtained from some monomials of $x_{1},\dots,x_{N}$ of degree \smash{$\sum_{i=1}^{N}m_{i}$}. Since $c^{v}_{F}$ is the coefficient of $m_{v}(\vec{x})$ which is of degree $|v|$, by assigning $c^{v}_{F}$ with degree $|v|$ one can pass the degree of the original monomials to their resulting constant terms.

Each $D_i$ is a sum of $2N$ single operators $\partial_{i}$, \smash{$\frac{1-\sigma_{i}}{x_{i}}$}, \smash{$\frac{1-\sigma_{ij}}{x_{i}-x_{j}}$} and \smash{$\frac{1-\tau_{ij}}{x_{i}+x_{j}}$}, in which $2N-2$ terms~involves a factor $\theta$, and hence \smash{$D_{1}^{m_{1}}\cdots D_{N}^{m_{N}}$} is a sum of \smash{$(2N)^{\sum_{i=1}^{N}m_{i}}$} products of single operators. The constant term of each of these products acting on $\exp(F(x_{1},\dots,x_{N}))$ is changing with $\theta$, $N$, $M$ as a muliple of \smash{$\theta^{\sum_{i=1}^{N}m_{i}-\#\partial_{i}\text{\ in\ the\ product}}$}, and the number of such products is of order~\smash{${\rm O}(N^{\sum_{i=1}^{N}m_{i}-\#\partial_{i}\text{\ in\ the\ product}})$}. Hence as $N\theta\rightarrow\gamma$ the coefficient is uniformly bounded.
\end{proof}

\begin{prop}\label{prop:dunklptodunkld}
 For any partition $\lambda$, we have
 \begin{gather*}
 N^{-l(\lambda)}\left[\prod_{i=1}^{l(\lambda)}P_{\lambda_{i}}\right]\exp(F(x_{1},\dots,x_{N}))\bigr|_{x_{1}=\dots =x_{N}=0}\\
 \qquad=\left[\prod_{i=1}^{l(\lambda)}(D_{i})^{\lambda_{i}}\right]\exp(F(x_{1},\dots,x_{N}))\bigr|_{x_{1}=\dots =x_{N}=0}+O\left(\frac{1}{N}\right),
 \end{gather*}
 in the limit regime $N\theta\rightarrow\gamma$, $ M\theta\rightarrow q\gamma$, where $O\bigl(\frac{1}{N}\bigr)$ is a homogeneous polynomial of $c^{v}_{F}$ {\rm(}taking~$c^{v}_{F}$ to be of degree $|v|)$ whose coefficients are of order $O\bigl(\frac{1}{N}\bigr)$.
\end{prop}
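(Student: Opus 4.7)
The plan is to expand $\prod_{i=1}^{l(\lambda)} P_{\lambda_i}$ as a sum over $M$-indexed tuples of products of single-coordinate Dunkl operators, separate the ``diagonal'' contributions (tuples with pairwise distinct indices) from the ``off-diagonal'' ones, and use the Weyl-equivariance of the Dunkl operators together with the symmetry of $\exp(F)$ to identify each diagonal term with a single fixed quantity.

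By definition $P_{\lambda_i} = \sum_{j=1}^{M} D_j^{\lambda_i}$, so
\begin{equation}
\prod_{i=1}^{l(\lambda)} P_{\lambda_i} = \sum_{(j_1, \ldots, j_{l(\lambda)}) \in \{1, \ldots, M\}^{l(\lambda)}} D_{j_1}^{\lambda_1} \cdots D_{j_{l(\lambda)}}^{\lambda_{l(\lambda)}}.
\end{equation}
Of these $M^{l(\lambda)}$ tuples, exactly $M(M-1)\cdots(M-l(\lambda)+1)$ have pairwise distinct entries, and the remaining $O(M^{l(\lambda)-1})$ have at least one repetition. For any distinct tuple, pick $\pi \in S_M$ with $\pi(i) = j_i$ for $i = 1, \ldots, l(\lambda)$. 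Since every $D_i$ is built from $\partial_i$, $\sigma_i$, $\sigma_{ij}$, $\tau_{ij}$ through a formula that treats the index $i$ symmetrically with all others, the coordinate permutation $\hat\pi$ defined by $(\hat\pi f)(z_1, \ldots, z_M) = f(z_{\pi(1)}, \ldots, z_{\pi(M)})$ satisfies $\hat\pi D_i \hat\pi^{-1} = D_{\pi(i)}$. Using the commutativity of $D_i$'s from Proposition \ref{prop:commutativity} together with $\hat\pi^{-1} \exp(F) = \exp(F)$ (since $F$ is symmetric), one obtains
\begin{equation}
D_{j_1}^{\lambda_1} \cdots D_{j_{l(\lambda)}}^{\lambda_{l(\lambda)}} \exp(F) = \hat\pi\bigl(D_1^{\lambda_1} \cdots D_{l(\lambda)}^{\lambda_{l(\lambda)}} \exp(F)\bigr).
\end{equation}
Evaluating at the origin, which is a fixed point of $\hat\pi$, this reduces to $[D_1^{\lambda_1} \cdots D_{l(\lambda)}^{\lambda_{l(\lambda)}} \exp(F)]|_0$, a value independent of the particular distinct tuple.

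Putting this together, the distinct tuples contribute $M(M-1)\cdots(M-l(\lambda)+1) \cdot [\prod_i D_i^{\lambda_i} \exp(F)]|_0$, and after dividing by $M^{l(\lambda)}$ this gives $(1 + O(1/M))$ times the proposed main term. For each of the $O(M^{l(\lambda)-1})$ repeated-index tuples, Proposition \ref{prop:unifombounded} ensures $[D_{j_1}^{\lambda_1} \cdots D_{j_{l(\lambda)}}^{\lambda_{l(\lambda)}} \exp(F)]|_0$ is a homogeneous polynomial of degree $|\lambda|$ in the $c^v_F$'s with uniformly bounded coefficients, so the $M^{-l(\lambda)}$-normalized total is a homogeneous polynomial with coefficients of order $O(1/M)$. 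The $(1 + O(1/M))$ correction on the main term is handled identically using the uniform boundedness of $[\prod_i D_i^{\lambda_i} \exp(F)]|_0$ via the same proposition. Adding the two error contributions yields the claimed $O(1/M)$ remainder with the stated polynomial structure.

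I do not expect a serious obstacle: the argument is essentially bookkeeping. The one point requiring care is verifying the Weyl-equivariance $\hat\pi D_i \hat\pi^{-1} = D_{\pi(i)}$ term by term in the formula of Definition \ref{def:dunkl}, which however is immediate since each summand ($\partial_i$, $(1-\sigma_i)/z_i$, $(1-\sigma_{ij})/(z_i - z_j)$, $(1-\tau_{ij})/(z_i + z_j)$) transforms covariantly under relabeling of variables.
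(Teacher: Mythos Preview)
Your proposal is correct and follows essentially the same approach as the paper: expand $\prod_i P_{\lambda_i}$ into $M^{l(\lambda)}$ products of single-index Dunkl powers, note that the $O(M^{l(\lambda)-1})$ tuples with a repeated index contribute $O(1/M)$ by Proposition~\ref{prop:unifombounded}, and use the symmetry of $F$ to identify each distinct-index term with $\prod_i D_i^{\lambda_i}\exp(F)|_0$. You spell out the equivariance $\hat\pi D_i\hat\pi^{-1}=D_{\pi(i)}$ and the falling-factorial correction $M(M-1)\cdots(M-l(\lambda)+1)/M^{l(\lambda)}=1+O(1/M)$ more explicitly than the paper does, but the argument is the same.
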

\begin{proof}
 Each $P_{\lambda_{i}}$ is a sum of $N$ terms $D_{j}^{\lambda_{i}}\ (j=1,2,\dots,N)$, hence $\prod_{i=1}^{l(\lambda)}P_{\lambda_{i}}$ is a sum of $N^{l(\lambda)}$ such terms, in which \smash{$O\bigl(N^{l(\lambda)-1}\bigr)$} terms have not all distinct indices $j$. By Proposition~\ref{prop:unifombounded}, each of these terms has uniformly bounded coefficient, hence they together contribute $O\bigl(\frac{1}{N}\bigr)$. As for the remaining terms with all distinct indices, by symmetry of $F(x_{1},\dots,x_{N})$, their action are all the same as \smash{$\prod_{i=1}^{l(\lambda)}D_{i}^{\lambda_{i}}$}.
\end{proof}

After all the reductions above, it remains to study \[\left[\prod_{i=1}^{l(\lambda)}(D_{i})^{\lambda_{i}}\right]\exp(F(x_{1},\dots,x_{N}))\bigr|_{x_{1}=\dots =x_{N}=0}\]for an arbitrary partition $\lambda$, whose expression should match the right side of~\eqref{eq_dunklaction}.
The expression on the right side of~\eqref{eq_dunklaction} can be split into three parts: the linear polynomials of $c^{v}_{F}$, the terms involving only $c^{v}_{F}$ where $v$ are length 1 partitions, and all the other remaining terms. In the next two propositions, we deal with the first two cases separately. Before that we present several lemmas that will be used in the proof. Consider the action of \smash{$\prod_{i=1}^{l(\lambda)}D_{i}^{\lambda_{i}}$} on $m_{\mu}(\vec{x})$. Each~$D_{i}$ is a~combination of \smash{$\partial_{i}+\bigl[\theta(M-N+1)-\frac{1}{2}\bigr]\frac{1-\sigma_{i}}{x_{i}}$} and \smash{$\theta\bigl[\frac{1-\sigma_{ij}}{x_{i}-x_{j}}+\frac{1-\tau_{ij}}{x_{i}+x_{j}}\bigr]$} with $N-1$ choices of~${j\ne i}$, hence \smash{$\prod_{i=1}^{l(\lambda)}D_{i}^{\lambda_{i}}$} will lead to a sum, whose summand are products of these two terms.

For a given $i\in \{1,2,\dots,N\}$, we say that the component \smash{$\theta\bigl[\frac{1-\sigma_{ij}}{x_{i}-x_{j}}+ \frac{1-\tau_{ij}}{x_{i}+x_{j}}\bigr]$} in $D_{i}$ has index~$j$ ($j\ne i$).

 \begin{Lemma}\label{lem:generic}
 For arbitrary partitions $\lambda$ and $\mu$, the constant term of \smash{$\prod_{i=1}^{l(\lambda)}D_{i}^{\lambda_{i}}m_{\mu}(\vec{x})$} has~a~ge\-neric part, which is a sum of product of components in $D_{i}$ of \smash{$\prod_{i=1}^{l(\lambda)}D_{i}^{\lambda_{i}}$}, such that all the indices $j$ in the product are distinct, and all bigger than $l(\lambda)$. The remaining part is of order~$O\bigl(\frac{1}{N}\bigr)$ in the limit regime $N\theta\rightarrow\gamma$, $M\theta\rightarrow q\gamma$.
 \end{Lemma}
 \begin{proof}
 For $k=0,1,\dots,|\lambda|$, the number of summands in the remaining part (which means their exists a pair of indices that coincides) with $k$ components of \smash{$\theta\bigl[\frac{1-\sigma_{ij}}{x_{i}-x_{j}}+\frac{1-\tau_{ij}}{x_{i}+x_{j}}\bigr]$} is of order~${\rm O}(N^{k-1})$, and the power of $\theta$ in these summands is $k$. Since $N\theta\rightarrow\gamma>0$, the remaining part is a finite sum of order $O\bigl(N^{k-1}\theta^{k}\bigr)$, which is $O\bigl(\frac{1}{N}\bigr)$.
 \end{proof}

 Because of this, we only consider the limit of the generic part of the expression. For simplicity, we write $l=l(\lambda)$.

 \begin{Lemma}\label{lem:generic2}
 The generic part of constant term of \smash{$\prod_{i=1}^{l}D_{i}^{\lambda_{i}}m_{\mu}(\vec{x})$} is given by
 \begin{equation}\label{eq_symmetricdunklaction}
 \big[D_{l}^{\lambda_{l}-1}\partial_{l}\big]\cdots\big[D_{2}^{\lambda_{2}-1}\partial_{2}\big]\cdot\big[D_{1}^{\lambda_{1}-1}\partial_{1}\big]m_{\mu}(\vec{x}).
 \end{equation}
 \end{Lemma}
 \begin{proof}
 Let $m=1,2,\dots,l$ denote the index of the operators $D_{m}$. For $m=1$, because of the symmetry, $m_{\mu}(\vec{x})$ is invariant under the action of $\sigma_{i}$, $\sigma_{ij}$ and $\tau_{ij}$, and hence $D_{i}m_{\mu}(\vec{x})$ is equal to~$\partial_{i}m_{\mu}(\vec{x})$.

 For $m=2,3,\dots,l$, after acting $\bigl[D_{m-1}^{\lambda_{m-1}-1}\partial_{m-1}\bigr]\cdots\bigl[D_{1}^{\lambda_{1}-1}\partial_{1}\bigr]$ on $m_{\mu}(\vec{x})$, since the generic part has distinct indices $j$ bigger than $l(\lambda)$, we get
 some polynomial $H(x_1,\dots,x_{N})$,
 where the operators act on variables $x_{1},\dots,x_{m-1}$ and $x_{j}$ ($j>l$). So $H(x_1,\dots,x_{N})$ is still symmetric as function of $x_{N}^{2}$ and $x_{j'}^{2}$ for another different $j'$ in the first copy of $D_{m}$, invariant again under the action of $\sigma_{i}$, $\sigma_{ij}$ and $\tau_{ij}$. We conclude that
 \[
 D_{m}\bigl[D_{m-1}^{\lambda_{m-1}-1}\partial_{m-1}\bigr]\cdots\bigl[D_{1}^{\lambda_{1}-1}\partial_{1}\bigr]m_{\mu}(\vec{x})
 =\partial_{m}\bigl[D_{m-1}^{\lambda_{m-1}-1}\partial_{m-1}\bigr]\cdots\bigl[D_{1}^{\lambda_{1}-1}\partial_{1}\bigr]m_{\mu}(\vec{x}).\tag*{\qed}
 \]\renewcommand{\qed}{}
 \end{proof}

 \begin{Remark}
 One can replace $m_{\mu}(\vec{x})$ by $F(x_{1},\dots,x_{N})$ or $\exp(F(x_{1},\dots,x_{N}))$ in last lemma, since these functions satisfy the same symmetry.
 \end{Remark}

 The next lemma considers the concrete action of $D_{i}$ on a polynomial of $x_{1},\dots,x_{l}$.
 \begin{Lemma}\label{lem:dunklonmonomial}
 For an arbitrary $l$-tuple $(n_{1},\dots,n_{l})\in \Z_{\ge 0}^{l}$ and arbitrary $i=1,2,\dots,l$,
 we have that for the generic part of $D_{i}$,
 \begin{align*}
 D_{i}\bigl[x_{1}^{n_{1}}\dots x_{l}^{n_{l}}\bigr]={}&\left(\partial_{i}+\left[\theta(M-N+1)-\frac{1}{2}\right]d_{i}'+2\theta(N-1)d_{i}\right)\bigl[x_{1}^{n_{1}}\cdots x_{l}^{n_{l}}\bigr]\\
 &+\sum_{j\ne i}\bigl(x_{j}p_{1}^{j}+x_{j}p_{2}^{j}\bigr),
 \end{align*}
 where $p_{1}^{j}$, $p_{2}^{j}$ are some polynomials of $x_{1},\dots,x_{l}$ depending on $(n_{1},\dots,n_{l})$, and
 $d_{i}$, $d'_{i}$ are linear operators on polynomials of $x_{1},\dots,x_{N}$ such that
\[
 d_{i}(x_{i}^{n})=
 \begin{cases}
 0, & n=0,\\
 2x_{i}^{n-1},& n>0,
 \end{cases}\qquad
 d'_{i}(x_{i}^{n})=
 \begin{cases}
 0 ,& \text{n is even,}\\
 2x_{i}^{n-1},& \text{n is odd.}
 \end{cases}
 \]
Note that the action depends on whether the power of $x_{i}$ is odd or even.
 \end{Lemma}
 \begin{proof}
 This follows directly from definition. More precisely, for $j>l$,
 \begin{gather*}
 \theta\frac{1-\sigma_{ij}}{x_{i}-x_{j}}\bigl[x_{1}^{n_{1}}\cdots x_{l}^{n_{l}}\bigr]=d_{i}\bigl[x_{1}^{n_{1}}\cdots x_{l}^{n_{l}}\bigr]+x_{j}p_{1}^{j},\\
 \theta\frac{1-\tau_{ij}}{x_{i}+x_{j}}\bigl[x_{1}^{n_{1}}\cdots x_{l}^{n_{l}}\bigr]=d_{i}\bigl[x_{1}^{n_{1}}\cdots x_{l}^{n_{l}}\bigr]+x_{j}p_{2}^{j},
 \end{gather*}
 and
 \[\theta\frac{1-\sigma_{i}}{x_{i}}\bigl[x_{1}^{n_{1}}\cdots x_{l}^{n_{l}}\bigr]=d_{i}'\bigl[x_{1}^{n_{1}}\cdots x_{l}^{n_{l}}\bigr].\tag*{\qed}
 \]\renewcommand{\qed}{}
 \end{proof}

\begin{prop}\label{prop:dunkllinear}
 For any even partition $\lambda$ with $|\lambda|=2k$, we have
 \begin{gather*}
 \left[\prod_{i=1}^{l(\lambda)}(D_{i})^{\lambda_{i}}\right]\exp(F(x_{1},\dots,x_{N}))\bigr|_{x_{1}=\dots =x_{N}=0}\\
 \qquad=b^{\lambda}_{\lambda}\cdot c^{\lambda}_{F}+\sum_{\mu\colon |\mu|=2k, l(\mu)>l(\lambda)}b^{\lambda}_{\mu}\cdot c^{\mu}_{F}+R+O\left(\frac{1}{N}\right).
 \end{gather*}
 In particular,
 \begin{align*}
 \lim_{N\theta\rightarrow\gamma,M\theta\rightarrow q\gamma}b^{\lambda}_{\lambda}={}&\prod_{i=1}^{l(\lambda)}[\lambda_{i}(\lambda_{i}-2+2q\gamma)(\lambda_{i}-2+2\gamma)(\lambda_{i}-4-2q\gamma)\\
 &\times(\lambda_{i}-4+2\gamma)\cdots
 (2+2q\gamma)(2+2\gamma)2q\gamma].
 \end{align*}
 The summand $R$ is a polynomial of $c^{v}_{F}$ that $|v|<2k$. And $O\bigl(\frac{1}{N}\bigr)$ denotes a linear polynomial of~$c^{v}_{F}$ such that $|v|=2k$, and the coefficients are of order $O\bigl(\frac{1}{N}\bigr)$ in the limit regime of this section.
\end{prop}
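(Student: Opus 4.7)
The plan is to reduce to a polynomial-level computation and then isolate the part linear in $c^v_F$ with $|v|=2k$. By Lemma \ref{lem:reducetopoly} one may replace $F$ by its degree-$2k$ Taylor polynomial $\tilde F(\Vec z)=\sum_{|\mu|\le 2k,\ \mu\ \text{even}}c^\mu_F\, m_\mu(\Vec z)$, since higher-degree terms are killed by $\prod_{i=1}^{l(\lambda)}D_i^{\lambda_i}$ upon evaluation at $0$; only even $\mu$ contribute because $F$ is symmetric in $z_i^2$. Expanding $\exp(\tilde F)=\sum_{n\ge 0}\tilde F^n/n!$ and using that $\prod_iD_i^{\lambda_i}$ lowers polynomial degree by exactly $|\lambda|=2k$, only terms of total degree $2k$ in $\tilde F^n$ survive at $z=0$. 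Under the grading $\deg c^v_F=|v|$, such a term from $\tilde F^n$ carries $n$ factors $c^{v_r}_F$ with $\sum_r|v_r|=2k$, so the full constant term is a homogeneous degree-$2k$ polynomial in the $c^v_F$'s, and its piece linear in $c^v_F$ with $|v|=2k$ comes exclusively from $n=1$, namely $\sum_{|\mu|=2k,\ \mu\ \text{even}}c^\mu_F\,b^\lambda_\mu$ with
$$b^\lambda_\mu:=\left[\prod_{i=1}^{l(\lambda)}D_i^{\lambda_i}\right]m_\mu(z_1,\dots,z_M)\Big|_{z_1=\cdots=z_M=0}.$$
Contributions with $n\ge 2$ depend only on $c^v_F$ with $|v|<2k$ and are uniformly bounded in the limit regime by Proposition \ref{prop:unifombounded}; these constitute the polynomial $R$.

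To identify $b^\lambda_\mu$ I combine Lemmas \ref{lem:generic}--\ref{lem:dunklonmonomial}. By Lemma \ref{lem:generic2}, up to an $O(1/M)$ non-generic error, $b^\lambda_\mu=[D_l^{\lambda_l-1}\partial_l]\cdots[D_1^{\lambda_1-1}\partial_1]m_\mu|_0$ with $l=l(\lambda)$, and by Lemma \ref{lem:dunklonmonomial} each inner $D_i$ acts on polynomials in $z_1,\dots,z_l$ through an effective single-variable operator in $z_i$, plus cross terms in $z_j$ with $j>l$ that are absorbed into the $O(1/M)$ remainder. A monomial $z^\alpha$ in $m_\mu$ has support of size $l(\mu)$: if $l(\mu)<l(\lambda)$, no such $\alpha$ covers $\{1,\dots,l\}$ and some $\partial_i$ kills the contribution; if $l(\mu)=l(\lambda)$, one needs $\mathrm{supp}(\alpha)=\{1,\dots,l\}$ and the effective operator in $z_i$ reduces the $z_i$-exponent by one per application, forcing $\alpha_i=\lambda_i$ for every $i$ and hence $\mu=\lambda$. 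Thus $b^\lambda_\mu=O(1/M)$ whenever $\mu\ne\lambda$ and $l(\mu)\le l(\lambda)$, which is absorbed into the final $O(1/M)$ term of the statement; boundedness of $b^\lambda_\mu$ for $l(\mu)>l(\lambda)$ is an immediate consequence of Proposition \ref{prop:unifombounded}.

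For $b^\lambda_\lambda$ only the single monomial $z_1^{\lambda_1}\cdots z_l^{\lambda_l}$ in $m_\lambda$ contributes generically, and since the $D_i$'s commute and act on disjoint variables in this factored monomial, the generic part factorises as $\prod_{i=1}^l A_i$ modulo $O(1/M)$, where
$$A_i\;=\;\bigl[(\partial+2\gamma d+((q-1)\gamma-\tfrac12)d')^{\lambda_i-1}\,\partial\bigr]\,z^{\lambda_i}\big|_{z=0}$$
and $\partial,\ d,\ d'$ are the single-variable operators of Definition \ref{def:operators}, after using $M\theta\to\gamma$ and $N\theta\to q\gamma$. A direct computation on $z^n$ shows that the effective operator multiplies by $n+2\gamma$ when $n$ is even and by $n+2q\gamma-1$ when $n$ is odd. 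Starting from $\partial z^{\lambda_i}=\lambda_i z^{\lambda_i-1}$ and iterating the remaining $\lambda_i-1$ applications, the exponent's parity alternates odd-even-odd-$\cdots$, producing the chain
$$A_i\;=\;\lambda_i\,(\lambda_i-2+2q\gamma)(\lambda_i-2+2\gamma)(\lambda_i-4+2q\gamma)(\lambda_i-4+2\gamma)\cdots(2+2q\gamma)(2+2\gamma)\cdot 2q\gamma$$
claimed in (\ref{eq_leadingcoefficient}).

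The hardest step is the cross-term bookkeeping underlying Lemma \ref{lem:dunklonmonomial}: the cross pieces $\sum_{j\ne i}(z_jp_1^j+z_jp_2^j)$ introduce extra variables that subsequent $D_{i'}$'s must kill, and one must verify that every such contribution is $O(1/M)$ after balancing the power of $\theta$ against the number of free indices in the cross sums, as in the proofs of Lemmas \ref{lem:generic} and \ref{lem:generic2}. The second subtle point is tracking the parity of the $z_i$-exponent through the $\lambda_i-1$ applications of $D_i$, which determines at each step whether the $d'$ term is active; the vanishing of $b^\lambda_\mu$ for small $l(\mu)$ and the identification of $R$ then follow from the support-size and grading arguments.
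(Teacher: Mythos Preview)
Your argument is correct and follows essentially the same route as the paper: isolate the linear part as $b^\lambda_\mu=\prod_iD_i^{\lambda_i}m_\mu|_0$, use Lemmas \ref{lem:generic}--\ref{lem:dunklonmonomial} to reduce to the generic single-variable action, eliminate $\mu\ne\lambda$ with $l(\mu)\le l(\lambda)$ by the support/exponent count, and compute $b^\lambda_\lambda$ by the alternating parity factorisation. One small sharpening: in the generic part the cross terms $z_jp^j$ with distinct $j>l$ do not merely contribute $O(1/M)$---they contribute exactly zero to the constant term, since each such $z_j$ is introduced once and no later operator touches it; the $O(1/M)$ piece comes solely from the non-generic summands handled by Lemma \ref{lem:generic}.
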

\begin{proof}
 Since the expression on the right is homogeneous of degree 2k, all the nonlinear terms are collected as $R$, and it suffices to consider the linear terms, which is
 \[\sum_{\mu\colon |\mu|=2k, \, \mu\ \text{is\ even}}b^{\lambda}_{\mu}\cdot c^{\mu}_{F}.\]
 We classify all the even partition $\mu$ with $|\mu|=2k$ in terms of their length. When $l(\mu)>l(\lambda)$, there is nothing to show.
 When $l(\mu)\le l(\lambda)$, we want to show when $\mu\ne \lambda$, $b^{\lambda}_{\mu}$ is of order $O\bigl(\frac{1}{N}\bigr)$.

 Writing $\exp(F(x_{1},\dots,x_{N}))$ as power series of $F(x_{1},\dots,x_{N})$. Since each term in $D_{i}$ reduces the total power of a monomial by 1, and \smash{$F(x_{1},\dots,x_{N})=\sum_{\mu\colon |\mu|\le 2k}c^{\mu}_{F}\cdot m_{\mu}(\vec{x})$} where each~$m_{\mu}(\vec{x})$ is homogeneous of degree $|\mu|$, we see that $b^{\lambda}_{\mu}$ is obtained from the action of \smash{$\prod_{i=1}^{l(\lambda)}D_{i}^{\lambda_{i}}$} on the single symmetric monomial $m_{\mu}(\vec{x})$.

 Again let $l=l(\lambda)$. By Lemmas~\ref{lem:generic} and~\ref{lem:generic2}, we first consider the generic part of $\prod_{i=1}^{l(\lambda)}D_{i}^{\lambda_{i}}\allowbreak\times m_{\mu}(\vec{x})$. When $l(\mu)<l$, each monomial of $m_{\mu}(\vec{x})$ is missing some variable among $x_{1},\dots,x_{l}$, say $x_{m}$. Then when acting the $\partial_{m}$ in~\eqref{eq_symmetricdunklaction}, we get 0 since \smash{$\bigl[D_{m-1}^{\lambda_{m-1}-1}\partial_{m-1}\bigr]\cdots\bigl[D_{1}^{\lambda_{1}-1}\partial_{1}\bigr]$} does not produce any power of $x_{N}$ to $m_{\mu}(\vec{x})$. And the remaining part of the action gives $O\bigl(\frac{1}{N}\bigr)$.

 What remains is to consider the case $l(\mu)=l(\lambda)$, and we calculate the limit of $b^{\lambda}_{\mu}$.
 Again~$b^{\lambda}_{\mu}$ is obtained from the action of \smash{$\prod_{i=1}^{l(\lambda)}D_{i}$} on $m_{\mu}(\vec{x})$. By Lemmas~\ref{lem:generic} and \ref{lem:generic2}, we consider only the generic part of the Dunkl product, and act it on the monomials of $m_{\mu}(\vec{x})$ separately. The monomials with variables other than $x_{1},\dots,x_{l}$ are missing some variables, say $x_{N}\ (1\le m\le l)$. Then~\eqref{eq_symmetricdunklaction} again tells that these monomials only contribute $O\bigl(\frac{1}{N}\bigr)$.

 Now we consider monomials formed by $x_{1},\dots,x_{l}$. For an arbitrary $l$-tuple $(n_{1},\dots,n_{l})$ and arbitrary $i=1,2,\dots,l$, by Lemma~\ref{lem:dunklonmonomial},
 we have
 \begin{align*}
 D_{i}\bigl[x_{1}^{n_{1}}\dots x_{l}^{n_{l}}\bigr]={}&\left(\partial_{i}+\left[\theta(M-N+1)-\frac{1}{2}\right]\bigl[1-(-1)^{n_{i}}\bigr]d_{i}+\theta(N-1)d_{i}+\theta(N-1)d_{i}\right)\\
 &\times\bigl[x_{1}^{n_{1}}\cdots x_{l}^{n_{l}}\bigr]+\sum_{j\ne i}\bigl(x_{j}p_{1}^{j}+x_{j}p_{2}^{j}\bigr).
 \end{align*}

 One can see from the above expression that, after a single action of $D_{i}$, $x_{1}^{n_{1}}\dots ,x_{l}^{n_{l}}$ splits into two parts. The $x_{i}$-power of the first part decreases by 1. The second part has a common factor~$x_{j}$, and its $x_{i}$-powers decreases as well while the powers of other variables are unchanged. For the action of \smash{$\prod_{i=1}^{l}D_{i}^{\lambda_{i}}$}, we repeat the above action by another $|\lambda|-1$ times. Since all indices~$j$ are distinct, the second part has no chance to become a constant. Hence we only apply the first part each time we apply one more single $D_{i}$, and \smash{$\prod_{i=1}^{l}D_{i}^{\lambda_{i}}$} results in reducing power of $x_{i}$ by $\lambda_{i}$. In the monomials of $m_{\mu}(\vec{x})$ where $l(\mu)=l(\lambda)$, only \smash{$x_{1}^{\lambda_{1}}\dots x_{l}^{\lambda_{l}}$} survives as a nonzero constant. More precisely (we use $\approx$ to omit the $O\bigl(\frac{1}{N}\bigr)$ part),
 \begin{align*}
 b^{\lambda}_{\lambda}\approx{}&
\bigl[z^{0}\bigr]\prod_{i=1}^{l}D_{i}^{\lambda_{i}}m_{\lambda}(\vec{x})
\approx\bigl[z^{0}\bigr]\prod_{i=1}^{l}D_{i}^{\lambda_{i}}\bigl[x_{1}^{\lambda_{1}}\cdots x_{l}^{\lambda_{l}}\bigr]\\
 \approx{}&\bigl[z^{0}\bigr] [\partial_{l}+ (2(N-1)\theta+2(M-N+1)\theta-1 )d_{l} ] [\partial_{l}+2(N-1)\theta d_{l} ]\cdots \\
 &\times [\partial_{l}+2(N-1)\theta d_{l} ] [\partial_{l}+ (2(N-1)\theta+2(M-N+1)\theta-1 )d_{l} ]\partial_{l}\cdots\\
 &\times [\partial_{1}+ (2(N-1)\theta+2(M-N+1)\theta-1 )d_{1} ] [\partial_{1}+2(N-1)\theta d_{1} ] \cdots \\
 &\times [\partial_{1}+2(N-1)\theta d_{1} ] [\partial_{1}+ (2(N-1)\theta+2(M-N+1)\theta-1 )d_{1} ]\partial_{1}
 \bigl[x_{1}^{\lambda_{1}}\cdots x_{l}^{\lambda_{l}} \bigr]\\
 ={}& [\partial_{l}+(2M\theta-1)d_{l} ] [\partial_{l}+2(N-1)\theta d_{l} ]\cdots
 [\partial_{l}+2(N-1)\theta d_{l} ] [\partial_{l}+(2M\theta-1)d_{l} ]\partial_{l}\cdots\\
 & \times [\partial_{1}+(2M\theta-1)d_{1} ] [\partial_{1}+2(N-1)\theta d_{1} ]\cdots [\partial_{1}+2(N-1)\theta d_{1} ] [\partial_{1}+(2M\theta-1)d_{1} ]\\
 &\times\partial_{1} \bigl[x_{1}^{\lambda_{1}}\cdots x_{l}^{\lambda_{l}} \bigr].
 \end{align*}
 As $N\theta\rightarrow\gamma$, $M\theta\rightarrow q\gamma$, the above expression converges to
 \begin{gather*}
 [\partial_{l}+(2q\gamma-1)d_{l} ] [\partial_{l}+2\gamma d_{l} ]\cdots [\partial_{l}+2\gamma d_{l} ] [\partial_{l}+(2q\gamma-1)d_{l} ]\partial_{l}\cdots\\
\qquad \quad{} \times [\partial_{1}+(2q\gamma-1)d_{1} ] [\partial_{1}+2\gamma d_{1} ]\cdots [\partial_{1}+2\gamma d_{1} ] [\partial_{1}+(2q\gamma-1)d_{1} ]\partial_{1}\bigl[x_{1}^{\lambda_{1}}\cdots x_{l}^{\lambda_{l}}\bigr]\\
\qquad=\prod_{i=1}^{l}(\lambda_{i}-1+2q\gamma-1)(\lambda_{i}-2+2\gamma)(\lambda_{i}-3+2q\gamma-1)\cdots(2+2\gamma)(1+2q\gamma-1)\\
\qquad =\prod_{i=1}^{l}\lambda_{i}(\lambda_{i}-2+2q\gamma)(\lambda_{i}-2+2\gamma)(\lambda_{i}-4+2q\gamma)(\lambda_{i}-4+2\gamma)\cdots(2+2q\gamma)\\
\phantom{\qquad =}{}\times(2+2\gamma)2q\gamma.
\end{gather*}
The above argument also implies for $l(\mu)=l(\lambda)$, $\mu\ne\lambda$,
\[
\prod_{i=1}^{l(\lambda)}D_{i}^{\lambda_{i}}m_{\mu}(\vec{x})
\]
 is of order $O\bigl(\frac{1}{N}\bigr)$, and so is $b^{\lambda}_{\mu}$.
\end{proof}

The next proposition deals with the terms involving only length 1 partitions, and identify them with $L$ in~\eqref{eq_dunklaction}.
\begin{prop}\label{prop:dunklactionnonlinear}
 For any even partition $\lambda$ with $|\lambda|=2k$, we have
 \begin{gather}
 \left[\prod_{i=1}^{l(\lambda)}(D_{i})^{\lambda_{i}}\right]\exp(F(x_{1},\dots,x_{N}))\bigr|_{x_{1}=\dots =x_{N}=0}\nonumber\\
 \qquad=\prod_{i=1}^{l(\lambda)}\left(\bigl[z^{0}\bigr](\partial+2\gamma d+\left((q-1)\gamma-\frac{1}{2}\right)d'+*_{g})^{\lambda_{i}-1}g(z)\right)+R+O\left(\frac{1}{N}\right),\label{eq_dunklactionnonlinear}
 \end{gather}
 where $g(z)=\sum_{n=1}^{\infty}nc^{(n)}_{F}z^{n-1}$, and $\partial$, $ d$, $ d'$ and $*_{g}$ are defined in Definition~{\rm\ref{def:operators}}. Moreover, $R$ is a homogeneous polynomial of $c^{v}_{F}$ that $|v|\le 2k$, and each monomial contains at least one~$c^{v}_{F}$ that~${l(v)>1}$, and $O\bigl(\frac{1}{N}\bigr)$ is a homogeneous polynomial of $c^{v}_{F}$ whose coefficients are of order~$O\bigl(\frac{1}{N}\bigr)$ in the limit regime $N\theta\rightarrow\gamma, M\theta\rightarrow q\gamma$.

\end{prop}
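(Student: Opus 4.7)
The approach is to split $F = F_1 + F_{\geq 2}$, where $F_1(\vec z) = \sum_{n\geq 1} c^{(n)}_F (z_1^n+\cdots+z_M^n)$ collects the length-one contributions, and to factor $\exp(F) = \exp(F_1)\exp(F_{\geq 2})$. Because $\exp(F_{\geq 2})$ is invariant under every reflection $\sigma_i$, $\sigma_{ij}$, $\tau_{ij}$, each Dunkl operator satisfies the Leibniz-type identity $D_i[U\cdot \exp(F_{\geq 2})] = \exp(F_{\geq 2})\cdot D_i U + U\cdot \partial_i\exp(F_{\geq 2})$. Iterating this identity through $\prod_i D_i^{\lambda_i}$ expresses the left-hand side of (\ref{eq_dunklactionnonlinear}) as a sum of terms: the term in which no $D_i$ ever differentiates $\exp(F_{\geq 2})$ reduces, at $z=0$, to $\prod_i D_i^{\lambda_i}\exp(F_1)\big|_{z=0}$; every other term contains at least one factor $\partial_i F_{\geq 2}$, whose coefficients are $c^\mu_F$ with $l(\mu)\geq 2$, and so collects into $R$.

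The main computation is then $\prod_i D_i^{\lambda_i}\exp(F_1)\big|_{z=0}$ in the limit $M\theta\to\gamma$, $N\theta\to q\gamma$. Writing $\exp(F_1) = \prod_{j=1}^M h(z_j)$ with $h(z) = \exp(\sum_n c^{(n)}_F z^n)$ factorizes the function; $h$ is even (since $c^{(n)}_F=0$ for odd $n$ by symmetry of $F$ in each $z_j^2$), so $\exp(F_1)$ is fixed by every $\sigma_i$, $\sigma_{ij}$, $\tau_{ij}$, and $\partial_i F_1 = g(z_i)$. Invoking Lemma \ref{lem:generic} and Lemma \ref{lem:generic2} to restrict to the generic part of $\prod_i D_i^{\lambda_i}$, I would show by induction on $k$ that
\[
D_i^{k}\exp(F_1) \;=\; P_k(z_i)\exp(F_1) \;+\; \bigl(\text{terms divisible by some }z_j,\ j>l(\lambda)\bigr) \;+\; O(1/M),
\]
with $P_0=1$ and $P_{k+1} = \mathbf{T}(P_k)$, where $\mathbf{T} := \partial + 2\gamma d + ((q-1)\gamma - \tfrac12)d' + *_g$. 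The inductive step reduces to four direct computations on $D_i[P(z_i)\exp(F_1)]$: (i) $\partial_i$ yields $(P'+gP)\exp(F_1)$ by the chain rule; (ii) $(1-\sigma_i)/z_i$ hits only $P$ (by evenness of $\exp(F_1)$) and contributes $[\theta(N-M+1)-\tfrac12]\,d'(P)\exp(F_1) \to ((q-1)\gamma-\tfrac12)d'(P)\exp(F_1)$; (iii) each generic cross index $j>l(\lambda)$ contributes $\frac{P(z_i)-P(z_j)}{z_i-z_j} + \frac{P(z_i)-P(-z_j)}{z_i+z_j}$ (since $\sigma_{ij},\tau_{ij}$ fix $\exp(F_1)$ and act inside $P$ only), which reduces at $z_j=0$ to $2d(P)(z_i)$, and summing over the $M-l(\lambda)$ generic indices gives $2\theta(M-l(\lambda))d(P) \to 2\gamma\,d(P)$; (iv) non-generic cross configurations contribute $O(1/M)$ by Lemma \ref{lem:generic}. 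Since $\mathbf{T}(1)=g$, iteration yields $P_{\lambda_i}=\mathbf{T}^{\lambda_i-1}(g)$, and evaluating at $z=0$ and multiplying over $i$ gives the claimed $\prod_i[z^0]\mathbf{T}^{\lambda_i-1}g(z)$.

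The principal obstacle will be the bookkeeping of the ``$z_j$-remainder'' terms across the induction: applying a subsequent $D_{i'}$ to a term proportional to some $z_j$ could, through a further cross reflection, redistribute that $z_j$-factor and produce a non-vanishing constant at $z=0$. One must verify that every such rearrangement either remains divisible by some $z_m$ (vanishing at the end), involves a coefficient $c^\mu_F$ with $l(\mu)\geq 2$ (feeding into $R$), or comes with an extra uncompensated power of $\theta$ (contributing to $O(1/M)$, since $\theta\sim \gamma/M$). This is essentially a refinement of Lemma \ref{lem:generic}: each non-generic cross action adds one $\theta$, and the number of non-generic configurations gains at most one extra $M$-factor relative to the generic count, so each non-generic contribution carries a net $O(1/M)$.
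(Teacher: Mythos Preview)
Your approach is essentially the paper's: it too restricts to the length-one part $\exp(\sum_n c_F^{(n)}m_{(n)})=\prod_t h(z_t)$, invokes Lemmas \ref{lem:generic}--\ref{lem:dunklonmonomial} to pass to the generic part, and iterates the identity $D_i[H(z_i)\exp(F_1)]=[(D_i+*_g)H]\exp(F_1)$, which is your recursion $P_{k+1}=\mathbf T(P_k)$ in other notation. Your Leibniz peeling of $\exp(F_{\ge 2})$ is a slightly cleaner way to see why the longer-partition terms land in $R$ than the paper's direct assertion; conversely, the one step the paper makes explicit that you leave implicit is the passage from the single-$i$ induction to the product over $i$: this requires that in the generic part the blocks $D_i^{\lambda_i}$ act on pairwise disjoint sets of variables ($\{z_i\}$ together with their own distinct cross-indices $j>l(\lambda)$), so that the action on $\prod_t h(z_t)$ factors --- the paper isolates this as a separate Claim.
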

\begin{proof}
 Again by Lemmas~\ref{lem:generic} and \ref{lem:generic2}, we only take the generic part of the action of Dunkl operators, namely, all indices $j$ involved are distinct and bigger than $l(\lambda)$, and the remaining part becomes $O\bigl(\frac{1}{N}\bigr)$ in~\eqref{eq_dunklactionnonlinear}.

Moreover, we only consider the polynomials involving only \smash{$c^{(n)}_{F}$} ($n=2, 4,\dots,2k)$, which are corresponding to $m_{(n)}(\vec{x})$, and all other terms are collected in $R$ and $O\bigl(\frac{1}{N}\bigr)$.
Hence, we only look at the action
 \begin{gather*}
 \left[\prod_{i=1}^{l(\lambda)}(D_{i})^{\lambda_{i}}\right]\exp\left(\sum_{n=2}^{2k}c^{(n)}_{F}m_{(n)}(\vec{x})\right)\Bigr|_{x_{1}=\dots =x_{N}=0}\\
 \qquad=\left[\prod_{i=1}^{l(\lambda)}(D_{i})^{\lambda_{i}}\right]\prod_{t=1}^{N}\exp\left(\sum_{n=2}^{2k}c^{(n)}_{F}(x_{t}^{n})\right)\Bigr|_{x_{1}=\dots =x_{N}=0}.
 \end{gather*}

\begin{Claim}
\begin{gather}
\left[\prod_{i=1}^{l(\lambda)}(D_{i})^{\lambda_{i}}\right]\prod_{t=1}^{N}\exp\left(\sum_{n=2}^{2k}c^{(n)}_{F}(x_{t}^{n})\right)\Bigr|_{x_{1}=\dots =x_{N}=0}\nonumber\\
\qquad=\prod_{i=1}^{l(\lambda)}\Bigg((D_{i})^{\lambda_{i}-1}\partial_{i}\left[ \prod_{t=1}^{N}\exp\left(\sum_{n=2}^{2k}c^{(n)}_{F}(x_{t}^{n})\right)\right]\Bigr|_{x_{i}=0}\Bigg).\label{eq_claim}
\end{gather}
\end{Claim}
\begin{proof}
Since the indices $j$ of $D_{i}$ are distinct and bigger than $l(\lambda)$, for $i_{1}\ne i_{2}$, \smash{$D_{i_{1}}^{\lambda_{i_{1}}}$} and \smash{$D_{i_{2}}^{\lambda_{i_{2}}}$} are acting on two groups of disjoint variables. Hence
the action of each \smash{$(D_{i})^{\lambda_{i}}$} factors. Moreover, the first $D_i$ acts as $\partial_{i}$ for the same reason as in the proof of Lemma~\ref{lem:generic2}.\end{proof}

Without loss of generality, consider $i=1$.
\[\partial_{1}\left[\prod_{t=1}^{N}\exp\left(\sum_{n=2}^{2k}c^{(n)}_{F}(x_{t}^{n})\right)\right]=g(x_{1})\prod_{t=1}^{N}\exp\left(\sum_{n=2}^{2k}c^{(n)}_{F}(x_{t}^{n})\right).\]
By~\eqref{eq_hz1}--\eqref{eq_hz4}, it suffices to consider the explicit action of $D_{i}$ on
\[
H(x_{1})\prod_{t=1}^{N}\exp\left(\sum_{n=2}^{2k}c^{(n)}_{F}(x_{t}^{n})\right),
\]
 where $H(x_1)$ is a polynomial of $x_{1}$, and
\[
D_{1}\left[H(x_{1})\prod_{t=1}^{N}\exp\left(\sum_{n=2}^{2k}c^{(n)}_{F}(x_{t}^{n})\right)\right]=[D_{1}H(x_{1})+g(x_{1})]\prod_{t=1}^{N}\exp\left(\sum_{n=2}^{2k}c^{(n)}_{F}(x_{t}^{n})\right).\]
Hence, we have for $i=1,2,\dots,l(\lambda)$,
\[D_{i}^{\lambda_{i}-1}\partial_{i}\left[\prod_{t=1}^{N}\exp
\left(\sum_{n=2}^{2k}c^{(n)}_{F}(x_{t}^{n})\right)\right]\Biggr|_{x_{1}=\dots =x_{N}=0}=(D_{i}+*_{g})^{\lambda_{i}-1}g(x_{i})\bigr|_{x_{i}=0}.\]

Again by Lemmas~\ref{lem:generic}, \ref{lem:generic2} and \ref{lem:dunklonmonomial}, up to $O\bigl(\frac{1}{N}\bigr)$ error,
\begin{align*}
(D_{i}+*_{g})^{\lambda_{i}-1}g(x_{i})\bigr|_{x_{i}=0}&\approx\left(\partial_{i}+2(N-1)\theta d_{i}+\left[\theta(M-N+1)-\frac{1}{2}\right]d_{i}'\right)^{\lambda_{i}-1}g(x_{i})\bigr|_{x_{i}=0}\\
&\xrightarrow[N\theta\rightarrow\gamma]{M\theta\rightarrow q\gamma}\left(\partial_{i}+2\gamma d_{i}+\left[(q-1)\gamma-\frac{1}{2}\right]d_{i}'+*_{g}\right)^{\lambda_{i}-1}g(x_{i})\bigr|_{x_{i}=0},
\end{align*}
and plugging this back to~\eqref{eq_claim} gives
\begin{gather*}
 \left[\prod_{i=1}^{l(\lambda)}(D_{i})^{\lambda_{i}}\right]\prod_{t=1}^{N}\exp\left(\sum_{n=1}^{2k}c^{(n)}_{F}(x_{j}^{n})\right)\bigr|_{x_{1}=\dots =x_{N}=0}\\
\qquad=\prod_{i=1}^{l(\lambda)}\left(\bigl[z^{0}\bigr]\left(\partial+2\gamma d+\left[(q-1)\gamma-\frac{1}{2}\right]d'+*_{g}\right)^{\lambda_{i}-1}g(z)\right)+O\left(\frac{1}{N}\right).
\end{gather*}
Proposition~\ref{prop:dunklactionnonlinear} then follows.
\end{proof}

Combining all the results above in this section, we arrive at the expansion~\eqref{eq_dunklaction} representing action of Dunkl operators on $\exp(F(x_{1},\dots,x_{N}))$.

\begin{proof}[Proof of Theorem \ref{thm:technicalequivalence}]
By Propositions~\ref{prop:unifombounded} and~\ref{prop:dunklptodunkld}, the left side of~\eqref{eq_dunklaction} is a homogeneous polynomial of $c^{v}_{F}$ of degree $2k$ with uniformly bounded coefficients in the limit regime. The right side of~\eqref{eq_dunklaction} is a combination of Proposition~\ref{prop:dunkllinear} which gives
\[
b^{\lambda}_{\lambda}\cdot c^{\lambda}_{F}+\sum_{\mu\colon|\mu|=2k,\, l(\mu)>l(\lambda)}b^{\lambda}_{\mu}\cdot c^{v}_{F}
\]
 and~\ref{prop:dunklactionnonlinear} (which gives polynomial L), and note that the only possible overlap of linear terms and terms involving only length 1 partitions is when $\lambda$ itself is $(2k)$, which gives the term subtracted in~\eqref{eq_L}.
\end{proof}

\subsection[q-gamma convolution]{$\boldsymbol{q}$-$\boldsymbol{\gamma}$ convolution}
After stating the equivalence in Theorem~\ref{thm:hightemperaturemainthm}, Theorem~\ref{thm:hightemplln} follows as a direct consequence.

\begin{proof}[Proof of Theorem~\ref{thm:hightemplln}]
For each $M\ge N$, $\theta>0$, let \smash{$G_{\theta,N,M}^{a}$}, \smash{$G_{\theta,N,M}^{b}$}, \smash{$G_{\theta,N,M}^{c}$}, denote the~type BC Bessel generating function of $\Vec{a}(N)$, \smash{$\Vec{b}(N)$} and \smash{$\Vec{c}_{N}=\Vec{a}(N)\boxplus_{N,M}^{\theta}\Vec{b}(N)$}. Then
\[G_{\theta,N,M}^{c}(x_{1},\dots,x_{N})=G_{\theta,N,M}^{a}(x_{1},\dots,x_{N})\cdot G_{\theta,N,M}^{b}(x_{1},\dots,x_{N}),\]
and hence partial derivatives of \smash{$\ln(G_{\theta,N,M}^{c})$} are equal to the sum of the ones of \smash{$\ln(G_{\theta,N,M}^{b})$} and~\smash{$\ln(G_{\theta,N,M}^{a})$}. By assumption of the theorem, $\{\Vec{a}(N)\}$ and $\bigl\{\Vec{b}(N)\bigr\}$ satisfy LLN condition, then by Theorem~\ref{thm:hightemperaturemainthm} they are $q$-$\gamma$-LLN appropriate. Hence by Definition~\ref{def:llnappropriateness} $\{\Vec{c}_{N}\}$ is also $q$-$\gamma$-LLN appropriate. By Theorem~\ref{thm:hightemperaturemainthm}, again $\{\Vec{c}_{N}\}$ satisfies LLN. \end{proof}

\section[q-gamma cumulants and moments]{$\boldsymbol{q}$-$\boldsymbol{\gamma}$ cumulants and moments}\label{sec:momentcumulant}
Fix $q\ge 1$, $\gamma>0$, in this section we continue with the limit regime $N,M\rightarrow\infty$, $\theta\rightarrow 0$, $N\theta\rightarrow\gamma$, $M\theta\rightarrow q\gamma$. Definition~\ref{def:operators} introduces a map $\mathrm{T}_{\kappa\rightarrow m}^{q,\gamma}$ in terms of operators, that sends the real sequence $\{\kappa_{l}\}_{l=1}^{\infty}$ to another real sequence $\{m_{k}\}_{k=1}^{\infty}$. We keep the interpretation from Theorem~\ref{thm:hightemperaturemainthm}, that is, we call $\{\kappa_{l}\}_{l=1}^{\infty}$ the $q$-$\gamma$ cumulants and take $\kappa_{l}=0$ for all odd $l$. In this section, we give a more combinatorial description of $T_{\kappa\rightarrow m}^{q,\gamma}$. After that, we also provide an explicit relation of $T_{m\rightarrow \kappa}^{q,\gamma}$ in terms of generating functions, and by taking $q$, $\gamma$ to some extreme values, we set the connections of our $q$-$\gamma$-cumulants to the usual cumulants and (rectangular) free cumulants in free probability theory, and also to the $\gamma$-cumulants defined in~\cite{BCG}, that arises in the high temperature regime of self-adjoint matrix additions.

\subsection[From q-gamma cumulants to moments]{From $\boldsymbol{q}$-$\boldsymbol{\gamma}$ cumulants to moments}\label{sec:cumulanttomoment}
We start by introducing some basic notions of set partitions, which are necessary for the statement of the main theorem.

For $k\in \Z_{\ge 1}$, a \emph{set partition} $\pi$ of $[k]$ is a way to write $[k]:=\{1,2,\dots,k\}$ as disjoint union of sets $B_{1},\dots,B_{s}$ for some $s$. We write $\pi=B_{1}\sqcup B_{2}\sqcup\dots \sqcup B_{s}$, and denote the space of all set partitions of $[k]$ by $P(k)$. Given a set partition $\pi$, for each $B_{i}$ let $\min(B_{i})$ and $\max(B_{i})$ denote the minimal and maximal number in the subset $B_{i}$ of $[k]$, and for simplicity, we label $B_{1},\dots,B_{s}$ by $\min(B_{i})$ in increasing order.

In this paper, we are in particular interested in the non-crossing partitions.
\begin{Definition}\label{def:noncrossingpartition}
 Fix $k\in \Z_{\ge 1}$, a set partition $\pi=B_{1}\sqcup\dots \sqcup B_{s}$ of $[k]$ is non-crossing if for any $l=2,\dots,s$, and any $j=1,2,\dots,l-1$, the elements in $B_{j}$ are either bigger than $\max(B_{l})$ or smaller than $\min(B_{l})$. See Figure~\ref{Figure-partition}. Denote the set of all non-crossing partitions of $[k]$ by~${\rm NC}(k)$.
\end{Definition}

Each set partition can be realized visually as a collection of blocks $B_{1},\dots,B_{s}$ with $k$ legs in total. We say that $|B_{i}|$, the number of elements in $B_{i}$, is the \emph{size} of $B_{i}$, which also counts the number of legs in the corresponding block, see Figure~\ref{Figure-partition}. From this point of view, $\pi$ is non-crossing if and only if the legs of one block does not cross any other blocks.
\begin{figure}[htpb]
\centering

 \includegraphics[width=0.68\linewidth]{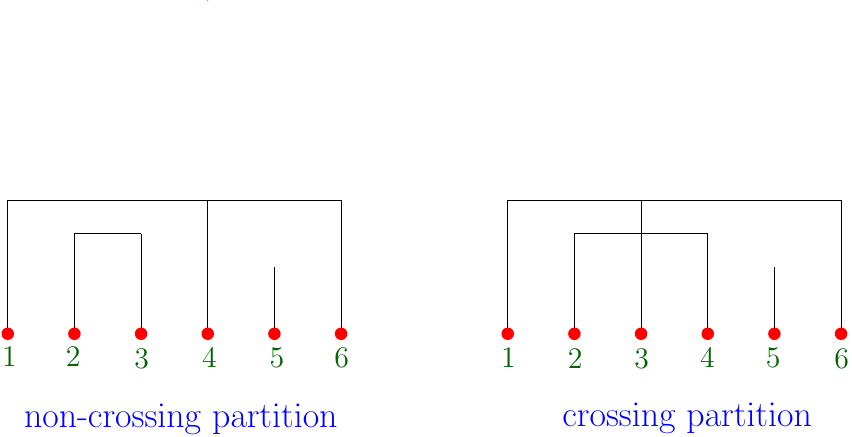}
 \caption{\small{The graph on the left represents a noncrossing partition $\pi$ of $[6]$, where $B_{1}=\{1,4,6\}$, $B_{2}=\{2,3\}$, $B_{3}=\{5\}$, and the graph on the right represents a crossing partition $\pi'$ of $[6]$, where $B_{1}=\{1,3,6\}$, $B_{2}=\{2,4\}$, $B_{3}=\{5\}$.}}\label{Figure-partition}
\end{figure}

Next we define a quantity associated with the non-crossing set partition $\pi$.
\begin{Definition}\label{def:weight}
 Given $\pi=B_{1}\sqcup\dots \sqcup B_{s}\in {\rm NC}(k)$, for $i=1,2,\dots,s$, let $P_{i}=\#$ of elements in $B_{1},\dots,B_{i}$ bigger than $\min(B_{i})$, and $Q_{i}=\#$ of elements in $B_1,\dots,B_{i}$ bigger than $\max(B_{i})$ (note that $P_{i}-Q_{i}=|B_{i}|-1$ by definition of $\pi$ being non-crossing). Let $C_{1},C_{2},\dots $ be the countable sequence of constants \[2q\gamma, \ 2\gamma+2, \ 2q\gamma+2, \ 2\gamma+4, \ 2q\gamma+4,\ 2\gamma+6,\ 2q\gamma+6,\ \dots, \] respectively. Then we define
 \smash{$
 W(\pi)=\prod_{i=1}^{s}\bigl[C_{Q_{i}+1}C_{Q_{i}+2}\cdots C_{P_{i}}\bigr]$}.
\end{Definition}
\begin{ex}\label{ex:partition}
In Figure~\ref{Figure-partition_example}, $\pi$ is a non-crossing partition of $[14]$, such that $B_{1}=\{1,7,8\}$, $B_{2}=\{2,3,6\}$, $B_{3}=\{4,5\}$, $B_{4}=\{9,10,13,14\}$, and $B_{5}=\{11,12\}$. Moreover, $P_{1}=2$, $Q_{1}=0$, $P_{2}=4$, $Q_{2}=2$, $P_{3}=4$, $Q_{3}=3$, $P_{4}=3$, $Q_{4}=0$, $P_{5}=3$, $Q_{5}=2$, so $W(\pi)=C_{1}C_{2}\cdot C_{3}C_{4}\cdot C_{4}\cdot C_{1}C_{2}C_{3}\cdot C_{3}=C_{1}^{2}C_{2}^{2}C_{3}^{3}C_{4}^{2}=(2q\gamma)^{2}(2\gamma+2)^{2}(2q\gamma+2)^{3}(2\gamma+4)^{2}$.
\end{ex}
\begin{figure}[htpb]
\centering

 \includegraphics{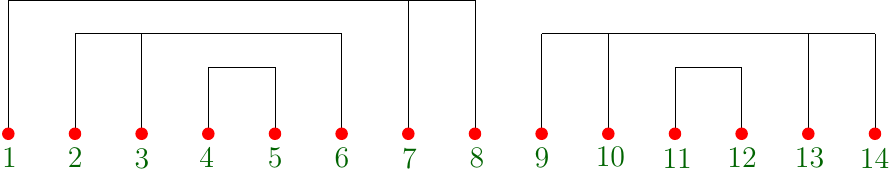}
 \caption{\small{The graphical representation of the non-crossing partition in Example~\ref{ex:partition}.}\label{Figure-partition_example}}
\end{figure}
We also introduce a notion of \emph{even partition} that will be used later.
\begin{Definition}\label{def:evenpartition}
 We say $\pi$ is even if $|B_{1}|,\dots,|B_{s}|$ are all even, and denote the collection of all non-crossing even set partitions of $[2k]$ by $\mathfrak{NC}(2k)$, for some $k\in \Z_{\ge 1}$.
\end{Definition}
The following main theorem of this section gives the combinatorial expression of moments as polynomials of $q$-$\gamma$ cumulants, whose coefficients are given by $W(\pi)$.
\begin{thm}[$q$-$\gamma$ cumulants to moments formula]\label{thm:cumulanttomomentcomb}
 Let $\{\kappa_{l}\}_{l=1}^{\infty}$, $\{m_{k}\}_{k=1}^{\infty}$ be two real sequences such that $\kappa_{l}=0$ for all odd $l$, and $\{m_{2k}\}_{k=1}^{\infty}=\mathrm{T}_{\kappa\rightarrow m}^{q,\gamma}(\{\kappa_{l}\}_{l=1}^{\infty})$. Then for any $k=1,2,\dots $,
 \begin{equation}\label{eq_cumulanttomomentcomb}
 m_{2k-1}=0,\qquad m_{2k}=\sum_{\pi\in \mathfrak{NC}(2k)}W(\pi)\prod_{B_{i}\in \pi}\kappa_{|B_{i}|}.
 \end{equation}

\end{thm}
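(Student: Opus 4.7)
\emph{Proof plan.} The approach is to unpack the operator formula $m_{2k} = [z^{0}] L^{2k-1} g(z)$ of Definition~\ref{def:operators} into a sum over combinatorial configurations and match these bijectively with $\mathfrak{NC}(2k)$. Writing $L = R + M$ with $R := \partial + 2\gamma d + ((q-1)\gamma - \frac{1}{2}) d'$ and $M := *_{g}$, a direct calculation on monomials gives $R(z^{n}) = C_{n} z^{n-1}$ for $n \geq 1$ and $R(1) = 0$, where $C_{n} = n + 2\gamma$ when $n$ is positive even and $C_{n} = n + 2q\gamma - 1$ when $n$ is odd---exactly the sequence in the theorem statement. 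Expanding $L^{2k-1}$ as a sum over words $w \in \{R, M\}^{2k-1}$, and then picking a monomial $k_{l_{i}} z^{l_{i} - 1}$ from each of the $n$ copies of $g$ that appear (one initial, one per $M$ in $w$), every configuration contributes $\bigl(\prod_{i} k_{l_{i}}\bigr)\bigl(\prod_{j} C_{n_{j}}\bigr)$ to $[z^{0}] L^{2k-1} g$, where $n_{j}$ is the $z$-degree of the state just before the $j$-th $R$. A nonzero contribution requires $\sum_{i} l_{i} = 2k$, every $n_{j} \geq 1$, and---since $k_{l} = 0$ for odd $l$---every $l_{i}$ even.

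Next I would set up the bijection with $\mathfrak{NC}(2k)$. Given $\pi$ with blocks $B_{1}, \ldots, B_{n}$ ordered by increasing minima, set $\sigma_{i-1} = M$ when $i = \min(B)$ for some block $B$ and $\sigma_{i-1} = R$ otherwise, and take $l_{j-1} = |B_{j}|$. Conversely, reading a word left to right and interpreting $M$ as pushing a new block (of the chosen size $l$) onto a stack, and $R$ as adding the current element to the top block (and popping once the block reaches its prescribed size), reconstructs $\pi$; the condition that $n_{j} \geq 1$ before every $R$ is equivalent to the stack being nonempty at each $R$-step. The bijection is manifestly weight-preserving on the $k_{l}$-factors, producing $\prod_{B \in \pi} k_{|B|}$, and the even-size restriction cuts the image down to $\mathfrak{NC}(2k)$.

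The main task, and the main obstacle, is to show that the $C_{n}$ factors accumulate to $W(\pi)$. The key is a \emph{degree-tracking lemma}: for each block $B_{j} = \{b_{1} < \cdots < b_{s}\}$ of $\pi$, the degree of the state just before the $R$ placing $b_{t}$ (for $t = 2, \ldots, s$) equals $P_{j} - t + 2$. Two non-crossing facts drive this. First, at time $b_{t} - 1$ the open blocks are exactly $B_{j}$ together with the blocks $B_{j'}$ ($j' < j$) nested around $B_{j}$: blocks lying entirely to the left of $B_{j}$ are already closed, and blocks with minimum exceeding $b_{1}$ are either not yet started or confined to a single gap of $B_{j}$ and already closed. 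Second, an outer block $B_{j'}$ cannot contain any element $e$ in a gap $(b_{u}, b_{u+1})$ of $B_{j}$, since the quadruple $\min(B_{j'}) < b_{u} < e < b_{u+1}$ would witness a crossing. Consequently the outer blocks contribute only their elements exceeding $b_{s}$, a total of $Q_{j}$ independent of $t$, while $B_{j}$ contributes its $s - t + 1$ unplaced elements; summing and using $P_{j} - Q_{j} = s - 1$ yields $P_{j} - t + 2$. Hence the $s - 1$ reducing operations in $B_{j}$ hit degrees $P_{j}, P_{j} - 1, \ldots, Q_{j} + 1$ in order, contributing $\prod_{v = Q_{j} + 1}^{P_{j}} C_{v} = W_{j}$; multiplying over blocks gives $W(\pi)$. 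Finally, $m_{2k-1} = 0$ is immediate from Proposition~\ref{prop:polyexpectation} since the Bessel generating function is even in each $z_{i}$.
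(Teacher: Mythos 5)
Your proof is correct, and it reaches the formula by a route that is organized differently from the paper's. Both arguments start from the operator formula of Definition \ref{def:operators} and interpret the expansion of $\left(\partial+2\gamma d+((q-1)\gamma-\tfrac12)d'+*_{g}\right)^{2k-1}g$ combinatorially, but the paper proceeds by induction on $k$: it first rewrites the power as the alternating product $D'\circ(D''\circ D')^{k-1}$ with the parity recursions (\ref{eq_d"})--(\ref{eq_d""}), and then grows the partition two legs at a time, tracking how the weight changes under each "enlarge the first block / split the first block" move. You instead expand $(R+M)^{2k-1}$ globally over words, observe $R(z^{n})=C_{n}z^{n-1}$ with exactly the paper's constants, set up a single weight-preserving bijection between nonvanishing (word, monomial-choice) configurations and $\mathfrak{NC}(2k)$ via the stack/innermost-open-block discipline, and prove the key degree-tracking lemma $D_{b_{t}-1}=Q_{j}+(s-t+1)=P_{j}-t+2$ using the two non-crossing facts you state; this yields the per-block factor $C_{Q_{j}+1}\cdots C_{P_{j}}$ of Definition \ref{def:weight} in closed form rather than verifying it incrementally. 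What your version buys is a direct, non-inductive derivation in which the identification of the weight $W(\pi)$ is transparent (and which sidesteps the explicit parity decomposition, evenness of blocks entering only through $k_{l}=0$ for odd $l$); what the paper's version buys is an argument whose steps mirror the recursions (\ref{eq_d"})--(\ref{eq_d""}) that are reused elsewhere. Two cosmetic points, neither a gap: your indexing $\sigma_{0}$ for the initial copy of $g$ should be flagged as the $0$-th step rather than an operator application, and $m_{2k-1}=0$ is simply part of the convention defining $\mathrm{T}_{k\rightarrow m}^{q,\gamma}$ (only even moments are produced), so no appeal to Proposition \ref{prop:polyexpectation} is needed.
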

\begin{Remark}
 It is well known (see, e.g., \cite[Proposition 9.8]{NS}) that non-crossing partitions are in bijection with the so-called Lukasiewicz paths, i.e., lattice paths whose steps take value in $\Z_{\ge -1}$ with starting and ending point at height 0. Therefore, the cumulant expression in~\eqref{eq_cumulanttomomentcomb} for $m_{2k}$ can be rewritten as a weighted sum over Lukasiewicz paths of length 2k. Such interpretation is used in the subsequent work~\cite{KX} and~\cite{CDM}. In particular, the moment expression in \cite[Theorem~3.7]{CDM} is similar to~\eqref{eq_cumulanttomomentcomb} and \cite[Theorem~3.10]{BCG}. But notice that in~\cite{CDM} Lukasiewicz paths are not allowed to have horizontal steps at height 0.
\end{Remark}
\begin{ex}
 By manipulating~\eqref{eq_cumulanttomomentcomb}, we have the explicit expression of the first two nontrivial moments in terms of $q$-$\gamma$ cumulants
 \begin{gather}\label{eq_cumulanttomomentex}
 m_{2}=2q\gamma \kappa_{2},\qquad
 m_{4}=2q\gamma(2\gamma+2)(2q\gamma+2)\kappa_{4}+\bigl[(2q\gamma)^{2}+2q\gamma(2\gamma+2)\bigr]\kappa_{2}^{2}.
 \end{gather}
\end{ex}
\begin{proof}[Proof of Theorem~\ref{thm:cumulanttomomentcomb}]
 Recall from Definition~\ref{def:operators} that
 \[m_{2k}=\bigl[z^{0}\bigr]\left(\partial+2\gamma d+\left[(q-1)\gamma-\frac{1}{2}\right]d'+*_{g}\right)^{2k-1}g(z),
 \]
 where $g(z)=\sum_{l=1}^{\infty}\kappa_{l}z^{l-1}$, i.e., we act the operator $D:=\partial+2\gamma d+\left((q-1)\gamma-\frac{1}{2}\right)d'+*_{g}$ on~$g(z)$ by $2k-1$ times, then take the constant term of the resulting expression.

 An explicit calculation shows that \[ \partial+2\gamma d+\left[(q-1)\gamma-\frac{1}{2}\right]d'+*_{g}=\sum_{l=1}^{\infty}U_{l}+
 \tilde{d}, \]
 where $U_{l}$, $l=1,2,\dots$, and $\tilde{d}$ are linear operators acting on polynomials of $z$, such that for $m\in \Z_{\ge 0}$,
 \begin{align*}
 U_{l}(z^{m})=\kappa_{l}z^{m+l-1},\qquad
 \tilde{d}(z^{m})=\begin{cases}
 (2q\gamma+m-1)z^{m-1}, & m\ \text{is odd};\\
 (2\gamma+m)z^{m-1}, &m\ \text{is even}, \ m\ge 1,\\
 0, &m=0.
 \end{cases}
 \end{align*}
Therefore,
 \[
 m_{2k}=\sum_{T(n)\colon 1\le n\le 2k}\bigl[z^{0}\bigr]\prod_{n=1}^{2k}T(n)(1),
 \]
 where the summation in the last line is over the finitely many choices of the operators $T(n)$. More precisely, each $T(n)$ takes value among $U_{1},\dots,U_{2k}$ and $\tilde{d}$. For each product $T(2k)\cdots T(1)$ with nonzero constant term, one can check easily that there is a one-to-one correspondence with a non-crossing partition $\pi=B_{1}\sqcup\cdots \sqcup B_{s}$ of $[2k]$ as follows:
 \begin{itemize}\itemsep=0pt
 \item if $T(n)=U_{l}$ for some $l\ge 1$, then the $n^{\rm th}$ leg of $\pi$ is the first leg of a block of size $l$,
 \item otherwise $T(n)=\tilde{d}$, and the $n^{\rm th}$ leg of $\pi$ is not the first leg of the block it belongs to,
 \item the non-crossing condition of $\pi$ then gives a unique way to group the legs.
 \end{itemize}
 Moreover, one can check by definition that under such correspondence, \[\bigl[z^{0}\bigr]T(2k)\cdots T(1)(1)=W(\pi)\prod_{i=1}^{s}\kappa_{|B_{i}|}.\]
 This finishes the proof.
\end{proof}

\begin{ex}
 To help clarifying the proof idea of Theorem~\ref{thm:cumulanttomomentcomb}, the partition in Example~\ref{ex:partition} is corresponding to the product $T(14)\cdots T(1)$, where $T(1)=T(2)=U_{3}$, $T(4)=T(11)=U_{2}$, $T(9)=U_{4}$, and all the rest of the operators are equal to $\tilde{d}$.
\end{ex}

\subsection[From moments to q-gamma-cumulants]{From moments to $\boldsymbol{ q}$-$\boldsymbol{\gamma}$-cumulants}\label{sec:momenttocumulant}
Recall that $T_{\kappa\rightarrow m}^{q,\gamma}$ is invertible, and for each $l=1,2,\dots $, $\kappa_{2l}$ is a polynomial of $m_{2}, m_{4},\dots,m_{2l}$ with leading term as a multiple of $m_{2l}$. For example, by reversing~\eqref{eq_cumulanttomomentex}, we have
\[
 \kappa_{2}=\frac{1}{2q\gamma} m_{2},\qquad
 \kappa_{4}=\frac{1}{2q\gamma(2\gamma+2)(2q\gamma+2)}\left[m_{4}-\left(1+\frac{\gamma+1}{q\gamma}\right)m_{2}^{2}\right].
 \]

For the more general cases, we express the generating function of $q$-$\gamma$ cumulants by the generating function of moments.

\begin{thm}\label{thm:momenttocumulant}
 Let $\{m_{2k}\}_{k=1}^{\infty}$, $\{\kappa_{l}\}_{l=1}^{\infty}$ be two real sequences such that
 \[\{\kappa_{l}\}_{l=1}^{\infty}=\mathrm{T}_{m\rightarrow \kappa}^{q,\gamma}(\{m_{2k}\}_{k=1}^{\infty}).\] Then $\kappa_{l}=0$ for all odd $l$, and
 \begin{gather}
 \exp\left[\gamma\sum_{k=1}^{\infty}\frac{m_{2k}}{k}y^{2k}\right]=\sum_{n=0}^{\infty}
c_{n}\cdot y^{2n},\qquad
\exp\left[\sum_{l=1}^{\infty}\frac{\kappa_{2l}}{2l}y^{2l}\right]=\sum_{n=0}^{\infty}\frac{c_{n}}{(\gamma)_{n}(q\gamma)_{n}}2^{-2n}y^{2n}\label{eq_momenttocumulant}
 \end{gather}
 for some auxiliary sequence $\{c_{n}\}_{n=0}^{\infty}$. Here we use the Pochhammer symbol notation
 \[(x)_{n}:=\begin{cases}
 x(x+1)\cdots(x+n-1) &\text{if}\ n\in \Z_{\ge 1},\\
 1 &\text{if}\ n=0.
 \end{cases}\]

 Alternatively, one has the more compact expression
 \begin{equation}\label{eq_momenttocumulantcmpt}
 \exp\left[\sum_{l=1}^{\infty}\frac{\kappa_{2l}}{2l}y^{2l}\right]=\bigl[z^{0}\bigr]\Bigg\{\sum_{n=0}^{\infty}\frac{(yz)^{2n}}{(\gamma)_{n}(q\gamma)_{n}}2^{-2n}\cdot \exp\left[\gamma\sum_{k=1}^{\infty}\frac{m_{2k}}{k}z^{-2k}\right]\Bigg\}.
 \end{equation}
\end{thm}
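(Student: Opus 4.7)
The plan is to derive (\ref{eq_momenttocumulant}) by computing the restricted Bessel generating function $G_{M,N;\theta}(y,0,\ldots,0)$ in closed form and then taking the high-temperature limit. Setting $z_1=y$ and $z_j=0$ for $j\ge 2$ in Proposition \ref{prop:bessel} kills every Jack term indexed by a partition $\mu$ with $\ell(\mu)>1$, while $P_{(n)}(y^2,0,\ldots;\theta)=y^{2n}$ and $H'((n))/H((n))=(\theta)_n/n!$ for the one-row case. The expansion therefore reduces to
\begin{equation*}
G_{M,N;\theta}(y,0,\ldots,0)=\sum_{n=0}^\infty \frac{\Gamma(\theta N)\,\Gamma(\theta M)}{\Gamma(\theta N+n)\,\Gamma(\theta M+n)}\cdot\frac{(\theta)_n}{n!}\cdot 2^{-2n}\,\EE[P_{(n)}(a_1^2,\ldots,a_M^2;\theta)]\,y^{2n}.
\end{equation*}

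Next I invoke the Jack-polynomial Cauchy identity (\cite[Chapter VI]{M}), which in the paper's normalization reads $\prod_{i,j}(1-x_iy_j)^{-\theta}=\sum_\lambda P_\lambda(x;\theta)Q_\lambda(y;\theta)$. Specializing the second family of variables to $(t,0,\ldots)$ kills all but single-row $\lambda$, and using $Q_{(n)}(t,0,\ldots;\theta)=(\theta)_n t^n/n!$ gives
\begin{equation*}
\sum_{n=0}^\infty \frac{(\theta)_n}{n!}\,P_{(n)}(x;\theta)\,t^n=\prod_i(1-x_it)^{-\theta}=\exp\Big[\theta\sum_{k=1}^\infty \frac{p_k(x)}{k}t^k\Big].
\end{equation*}
Substituting $x_i=a_i^2$ so that $p_k(a^2)=M\,p_{2k}^M$ and taking expectations produces
\begin{equation*}
\sum_{n=0}^\infty \frac{(\theta)_n}{n!}\,\EE[P_{(n)}(a^2;\theta)]\,t^n=\EE\Big[\exp\Big(\theta M\sum_{k=1}^\infty \frac{p_{2k}^M}{k}t^k\Big)\Big].
\end{equation*}
For each fixed $r$ the coefficient of $t^r$ on the right is a finite sum over compositions of $r$ of terms $(\theta M)^n/n!\cdot\EE[\prod_i p_{2k_i}^M]/\prod_i k_i$; by the LLN hypothesis (Definition \ref{def:llnsatisfaction}) and $\theta M\to\gamma$ each such term converges to $\gamma^n/n!\cdot\prod_i m_{2k_i}/\prod_i k_i$. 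Hence the right-hand side converges coefficient-wise in $t$ to $\exp[\gamma\sum_k m_{2k}t^k/k]=\sum_n c_n t^n$, and in particular $(\theta)_n/n!\cdot\EE[P_{(n)}(a^2;\theta)]\to c_n$.

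Substituting $t=y^2$ recovers the first identity of (\ref{eq_momenttocumulant}). Combining it with $\Gamma(\theta N)/\Gamma(\theta N+n)\to 1/(q\gamma)_n$, $\Gamma(\theta M)/\Gamma(\theta M+n)\to 1/(\gamma)_n$, and with the identification $\ln G_{M,N;\theta}(y,0,\ldots,0)\to \sum_l k_{2l}y^{2l}/(2l)$ as a formal power series (which follows from the LLN implying $q$-$\gamma$-LLN-appropriateness via Theorem \ref{thm:hightemperaturemainthm}) then yields the second identity of (\ref{eq_momenttocumulant}). The compact form (\ref{eq_momenttocumulantcmpt}) is a formal consequence: writing $c_n=[z^{-2n}]\exp[\gamma\sum_k m_{2k}z^{-2k}/k]$ and summing against the kernel $(yz)^{2n}/((q\gamma)_n(\gamma)_n)\cdot 2^{-2n}$ before extracting $[z^0]$ reproduces the right-hand side. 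The only point requiring care is the Jack-polynomial convention: the paper's $\theta$ corresponds to the reciprocal of Macdonald's parameter $\alpha$, which is why the Cauchy identity carries exponent $-\theta$ rather than $-1/\theta$; beyond this, all remaining steps are equalities of coefficients in formal power series rings.
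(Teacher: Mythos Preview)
Your strategy is essentially the paper's own: restrict the Bessel generating function to a single variable, recognize the one-row Jack expansion via the Cauchy identity, and pass to the high-temperature limit using Theorem~\ref{thm:hightemperaturemainthm}. The computations with $H'((n))/H((n))=(\theta)_n/n!$ and the identification $Q_{(n)}=\frac{(\theta)_n}{n!}P_{(n)}$ are correct and match Lemma~\ref{lem:forthmmomenttocumulant}.

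There is, however, a genuine gap. You repeatedly invoke ``the LLN hypothesis'' and write expectations $\EE[P_{(n)}(a^2;\theta)]$, $\EE[\prod_i p_{2k_i}^M]$, but the theorem gives you only two abstract real sequences $\{m_{2k}\}$ and $\{k_l\}=\mathrm{T}_{m\rightarrow k}^{q,\gamma}(\{m_{2k}\})$; no sequence of $M$-tuples is assumed to exist. For arbitrary real $\{m_{2k}\}$ there need not be any measure (let alone a family $\{\vec a_M\}$) realizing them, so the step ``by the LLN hypothesis each term converges'' has no object to act on. The paper closes this gap in two moves: first it \emph{constructs} a deterministic sequence $\{\vec a_M\}$ whose empirical measures approximate a fixed compactly supported $\mu$ on $[a,b]$ with moments $m_{2k}$, runs exactly your limit argument on that concrete sequence, and then, second, observes that for each fixed $l$ both sides of the claimed identity are polynomials (of degree $\le l$) in finitely many $m_{2k}$'s, so agreement on the Zariski-dense set of moment vectors coming from compactly supported measures forces agreement for all real sequences. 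You need both of these steps; without them your argument is circular at the point where you appeal to Definition~\ref{def:llnsatisfaction}.

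A secondary point: when you pass from coefficient-wise convergence of $\sum_n \frac{(\theta)_n}{n!}\EE[P_{(n)}]t^n$ to convergence of $\ln G_{M,N;\theta}(y,0,\ldots,0)$ as a power series, you are implicitly interchanging a limit with the coefficient extraction of a logarithm. This is harmless at the level of formal power series (each $k_{2l}$ depends polynomially on finitely many of the $c_n/((q\gamma)_n(\gamma)_n)$), but it is worth saying explicitly, as the paper does, rather than bundling it into the phrase ``as a formal power series.''
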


Before giving the proof, we first present two technical results that will be used.

\begin{Lemma}\label{lem:forthmmomenttocumulant}\quad
\begin{itemize}\itemsep=0pt
 \item[$(a)$] The following Taylor series expansion holds:
 \begin{equation}\label{eq_onerowjackgeneratingfunction}
 \sum_{k=0}^{\infty}Q_{(k)}\bigl(a_{1}^{2},\dots,a_{N}^{2};\theta\bigr)y^{2k}=\prod_{i=1}^{N}\bigl(1-a_{i}^{2}y^{2}\bigr)^{-\theta}.
 \end{equation}

 \item[$(b)$] For $\theta>0$, $y\in \C$ and $\Vec{a}=(a_{1}\ge \dots \ge a_{N}\ge 0)$,
 \[
 B\bigl(\Vec{a},y,0^{N-1};\theta\bigr)=\sum_{k=0}^{\infty}\frac{1}{(N\theta)_{k}(M\theta)_{k}}2^{-2k}Q_{(k)}\bigl(a_{1}^{2},\dots,a_{N}^{2};\theta\bigr)y^{2k},
 \]
 where $Q_{(k)}\bigl(a_{N}^{2};\theta\bigr)$ is defined in~\eqref{eq_jack2}, and $(k)$ denotes the partition $(k,0,\dots,0)\in \Lambda_{N}$. Moreover, the power series converges uniformly in a domain near 0.
\end{itemize}

\end{Lemma}
\begin{proof}
 (a) is a well known result that can be found in \cite[p.~378 and p.~380]{M}. (b) follows from Definition~\ref{def:bessel} and~\eqref{eq_jack2}, after taking $(x_{1},\dots,x_{N})=\bigl(y,0^{N-1}\bigr)$.
\end{proof}

\begin{proof}[Proof of Theorem~\ref{thm:momenttocumulant}]
 First we note that~\eqref{eq_momenttocumulant} and~\eqref{eq_momenttocumulantcmpt} are equivalent by comparing the coefficients for $y^{2n}$ for each $n=0,1,2,\dots $, and we will prove~\eqref{eq_momenttocumulant}.

 For now, we assume that there exists a probability measure $\mu$ supported on $[0,\eta]$ for some $\eta>0$, such that for $k=1,2,\dots $,
$m_{2k}=\int x^{k} {\rm d}\mu$.

 We take a sequence of deterministic $N$-tuples $\{\vec{a}(N)\}_{N=1}^{\infty}$ such that $\Vec{a}(N)=(a_{N,1},\dots,a_{N,N})\in [0,\sqrt{\eta}]^{N}$, and define \smash{$\mu_{N}=\frac{1}{N}\sum_{i=1}^{N}\delta_{a_{N,i}^{2}}$}. We choose $\{\Vec{a}(N)\}$ in a way that $\mu_{N}\rightarrow \mu$ weakly as $N\rightarrow \infty$. This implies that the moments of $\mu_{N}$ also converge pointwisely to the corresponding moments of $\mu$, i.e.,
 \[\frac{1}{N}\sum_{i=1}^{N}a_{N,i}^{2k}\longrightarrow m_{2k}.\]
 In other words, $\{\vec{a}(N)\}_{N=1}^{\infty}$ satisfies the LLN condition, and by Theorem~\ref{thm:hightemperaturemainthm} $\{\vec{a}(N)\}_{N=1}^{\infty}$ is $q$-$\gamma$-LLN-appropriate.

 By Lemma~\ref{lem:forthmmomenttocumulant} (a),
 \begin{align}
 \sum_{k=0}^{\infty}Q_{(k)}\bigl(a_{N,1}^{2},\dots,a_{N,N}^{2};\theta\bigr)y^{2k}&=\prod_{i=1}^{N}\bigl(1-a_{N,i}^{2}y^{2}\bigr)^{-\theta}=\exp\Bigg[-\theta\sum_{i=1}^{N}\ln\bigl(1-a_{N,i}^{2}y^{2}\bigr)\Bigg]\nonumber\\
 &=\exp\Bigg[\theta N\sum_{k=1}^{\infty}\frac{y^{2k}}{k}\frac{1}{N}\sum_{i=1}^{N}(a_{N,i})^{2k}\Bigg]\label{eq_ck1}
 \end{align}
 as a formal power series. Taking $N\rightarrow\infty$, $\theta\rightarrow 0$, $N\theta\rightarrow\gamma$, the coefficients of the exponent on the right side converge by the LLN assumption, which implies the convergence of $Q_{(k)}\bigl(a_{N,1}^{2},\dots,a_{N,N}^{2};\theta\bigr)$ for each $k\in \Z_{\ge 0}$. Then the above equality becomes
 \begin{equation}\label{eq_ck2}
 \sum_{k=0}^{\infty}c_{k}\cdot y^{2k}=\exp\left[\gamma\sum_{k=1}^{\infty}\frac{m_{k}}{k}y^{2k}\right],
 \end{equation}
 where $c_{k}$ is the point-wise limit of $Q_{(k)}\bigl(a_{N,1}^{2},\dots,a_{N,N}^{2};\theta\bigr)$. This defines $\{c_{k}\}_{k=1}^{\infty}$ in terms of $\{m_{2k}\}_{k=1}^{\infty}$.

 On the other hand, since $\mu_{N}$ is deterministic, its type BC Bessel generating function is equal to its Bessel function. And we have for $l=1,2,\dots $
 \[
 \left(\frac{\partial}{\partial y}\right)^{2l} \ln\bigl[ B\bigl(\Vec{a}(N),y,0^{N-1};\theta\bigr)\bigr]\bigr|_{y=0}\xrightarrow[N\theta\rightarrow\gamma]{M\theta\rightarrow q\gamma}(2l-1)!\cdot \kappa_{2l}.
 \]

By Lemma~\ref{lem:forthmmomenttocumulant}\,(b), the above equation is equivalent to
\begin{gather}
 \left(\frac{\partial}{\partial y}\right)^{2l} \ln \left[\sum_{k=0}^{\infty}\frac{1}{(N\theta)_{k}(M\theta)_{k}}2^{-2k}Q_{(k)} \bigl(a_{N,1}^{2},\dots,a_{N,N}^{2};\theta\bigr)y^{2k}\right]\biggr|_{y=0}\nonumber\\
 \qquad\xrightarrow[N\theta\rightarrow\gamma]{M\theta\rightarrow q\gamma}(2l-1)!\cdot \kappa_{2l}.\label{eq_derivativeofqgeneratingfunction}
\end{gather}
 Also, since type BC Bessel function is analytic over $x_{1},\dots,x_{N}$, so is its logarithm near 0, and by Taylor expanding \smash{$\ln \bigl[\sum_{k=0}^{\infty}\frac{1}{(N\theta)_{k}(M\theta)_{k}}2^{-2k}Q_{(k)}\bigl(a_{N,1}^{2},\dots,a_{N,N}^{2};\theta\bigr)y^{2k}\bigr]$} we see that each~$\kappa_{2l}$ is a polynomial of finitely many terms \smash{$\frac{1}{(N\theta)_{k}(M\theta)_{k}}2^{-2k}Q_{(k)}\bigl(a_{N,1}^{2},\dots,a_{N,N}^{2};\theta\bigr)$}, each of which converges to \smash{$\frac{1}{(\gamma)_{k}(q\gamma)_{k}}2^{-2k}c_{k}$}.

 We claim that as $N\theta\rightarrow\gamma$, $M\theta\rightarrow q\gamma$, \smash{$\sum_{k=0}^{\infty}\frac{1}{(N\theta)_{j}(M\theta)_{j}}2^{-2k}Q_{(k)}\bigl(a_{N,1}^{2},\dots,a_{N,N}^{2};\theta\bigr)y^{2k}$} converges uniformly on a domain near 0. Indeed, the pointwise convergence of coefficient of $y$ is already given above, and to obtain a tail bound of the power series, first note that we have assumed that $a_{N,i}$ are uniformly bounded, then by writing each $Q_{(k)}\bigl(a_{N,1}^{2},\dots,a_{N,N}^{2};\theta\bigr)$ as a~contour integral of the right side of~\eqref{eq_onerowjackgeneratingfunction} on the circle $\{z\colon |z|=r\}$ for some $r$ small enough, we see that $Q_{(k)}\bigl(a_{N,1}^{2},\dots,a_{N,N}^{2};\theta\bigr)$ are uniformly bounded by $C\cdot r^{-2k}$ for some constant $C$ and $r$.
 By~\eqref{eq_ck1}, \eqref{eq_ck2}, the limit is
 \[
 \sum_{k=0}^{\infty}\frac{c_{k}}{(\gamma)_{k}(q\gamma)_{k}}y^{2k},
 \]
 but since the uniform convergence of analytic functions implies convergence of derivatives, it is also equal to $\exp\left[\sum_{l=1}^{\infty}\frac{\kappa_{2l}}{2l}y^{2l}\right]$ by~\eqref{eq_derivativeofqgeneratingfunction}. Hence these two functions are equal.

 It remains to generalize to the case where $m_{2}, m_{4},\dots $ are arbitrary real sequence. For each $l=1,2,\dots $, $\kappa_{2l}$ is a polynomial $h_{l}(m_{2},m_{4},\dots,m_{2l})$ of degree at most $l$, where the expression of $h_{l}$ is given by~\eqref{eq_momenttocumulantcmpt}, while on the other hand, for each $\kappa_{2l}$, \eqref{eq_cumulanttomomentcomb} gives another polynomial of degree at most $l$, such that $\kappa_{2l}=h'_{l}(m_{2},m_{4},\dots,m_{2l})$. What we need to show is that $h_{l}=h_{l}'$ for all $l$.

 Fix $l\ge 1$. We have already shown that $h_{l}(m_{2},m_{4},\dots,m_{2l})=h_{l}'(m_{2},m_{4},\dots,m_{2l})$, when $m_{2},m_{4},\dots,m_{2l}$ are the first $l$ moments of some compactly supported probability measure $\mu$ on $\R_{\ge 0}$. Clearly, there exists more than $l$ such choices of $m_{2},m_{4},\dots,m_{2l}$, and therefore by fundamental theorem of algebra these two polynomials coincide. The same argument holds for arbitrary $l\ge 1$.
 \end{proof}

\subsection{Connections to self-adjoint additions}
Let $A$, $B$ be two independent $N\times N$ matrices, uniformly chosen from the sets of self-adjoint matrices with deterministic eigenvalues $a_{1}\ge\dots \ge a_{N}$ and $b_{1}\ge\dots \ge b_{N}$ respectively. The study of eigenvalues of $C=A+B$ dates back to~\cite{Vo}, which considers the empirical measure of~$C$ in fixed temperature regime. in the high temperature regime, it was first considered by~\cite{MeP} which introduced the notion of ``high temperature convolution'', and studied later in several texts including~\cite{BCG,Me} and~\cite{Me2}. In particular, it was proved rigorously that when~${N\rightarrow\infty}$, $\theta\rightarrow0$ and $N\theta\rightarrow\gamma$, assuming the empirical measure of $A$, $B$ converge to some deterministic probability measure $\mu_{A}$, $\mu_{B}$ on $\R$, then the empirical measure of $C$ converges to some deterministic probability measure $\mu_{C}$, which is named as the $\gamma$-convolution of $\mu_{A}$ and $\mu_{B}$.

There is a collection of quantities $\{\kappa_{l}^{\gamma}\}_{l=1}^{\infty}$ introduced in~\cite{MeP} and~\cite{BCG} independently by different approaches, such that
$\kappa^{\gamma}_{l}(\mu_{C})=\kappa^{\gamma}_{l}(\mu_{A})+\kappa^{\gamma}_{l}(\mu_{B})$
for each $l\ge 1$. In the following, we refer to~\cite{BCG}. We write $\{m_{k}'\}_{k=1}^{\infty}=\mathrm{T}_{\kappa\rightarrow m}^{\gamma}\bigl(\bigl\{\kappa^{\gamma}_{l}\bigr\}_{l=1}^{\infty}\bigr)$, where~${m'_{k}\in \R}$ denotes the \smash{$k^{\rm th}$} moment of the limiting empirical measure, and $\mathrm{T}_{\kappa\rightarrow m}^{\gamma}$ is a map that gives moment-cumulant relation of $\gamma$-convolution. While in this paper we are considering addition of a different type of matrices, we find a limit transition in the high temperature regime from rectangular addition to self-adjoint addition, which is stated in terms of cumulants.

\begin{thm}\label{thm:gammacumulant}
 Given a real sequence $\{\kappa_{l}\}_{l=1}^{\infty}$ such that $\kappa_{l}=0$ for all odd $l$, let $\{m_{2k}\}_{k=1}^{\infty}=\mathrm{T}_{\kappa\rightarrow m}^{q,\gamma}(\{\kappa_{l}\}_{l=1}^{\infty})$, $m_{k}'=\frac{m_{2k}}{(q\gamma)^{k}}$, $\kappa_{l}'=2^{2l-1}\kappa_{2l}$ for $l=1,2,\dots $. Then
 \[
 \lim_{q\rightarrow\infty}\{m_{k}'\}_{k=1}^{\infty}=\mathrm{T}_{\kappa\rightarrow m}^{\gamma}(\{\kappa_{l}'\}_{l=1}^{\infty}).
 \]
\end{thm}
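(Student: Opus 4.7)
The plan is to work at the level of the analytic (generating-function) form of the moment-cumulant relation from Theorem \ref{thm:momenttocumulant}, rescale according to the definitions of $m_k^{\prime}$ and $k_l^{\prime}$, and then take $q\to\infty$ by exploiting the elementary Pochhammer asymptotic $(q\gamma)_n/(q\gamma)^n\to 1$. This shortcuts any direct combinatorial matching of the weights $W(\pi)$ of Theorem \ref{thm:cumulanttomomentcomb} with those of \cite{BCG}.

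More concretely, Theorem \ref{thm:momenttocumulant} provides an auxiliary sequence $\{c_n\}_{n\ge 0}$ with
\begin{equation*}
\exp\Big[\gamma\sum_{k\ge 1}\tfrac{m_{2k}}{k}y^{2k}\Big]=\sum_{n\ge 0}c_n\,y^{2n},\qquad
\exp\Big[\sum_{l\ge 1}\tfrac{k_{2l}}{2l}y^{2l}\Big]=\sum_{n\ge 0}\tfrac{c_n}{(q\gamma)_n(\gamma)_n}\,2^{-2n}\,y^{2n}.
\end{equation*}
I would substitute $m_{2k}=(q\gamma)^k m_k^{\prime}$ and $k_{2l}=2^{1-2l}k_l^{\prime}$, then rescale the variable separately in each identity: let $t:=(q\gamma)y^2$ in the first and $s:=y^2/4$ in the second. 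The first identity collapses to
\begin{equation*}
\exp\Big[\gamma\sum_{k}\tfrac{m_k^{\prime}}{k}\,t^k\Big]=\sum_{n}c_n^{\prime}\,t^n,\qquad c_n^{\prime}:=c_n/(q\gamma)^n,
\end{equation*}
whose left side is manifestly $q$-independent, so $c_n^{\prime}$ is a polynomial in $m_1^{\prime},\dots,m_n^{\prime}$ with $q$-free coefficients. The second becomes
\begin{equation*}
\exp\Big[\sum_{l}\tfrac{k_l^{\prime}}{l}\,s^l\Big]=\sum_{n}\tfrac{c_n^{\prime}\,(q\gamma)^n}{(q\gamma)_n(\gamma)_n}\,s^n.
\end{equation*}

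Taking $q\to\infty$ coefficient-wise and using $(q\gamma)^n/(q\gamma)_n\to 1$ for each fixed $n$, the right-hand side of the last display converges term by term to $\sum_n c_n^{\prime}/(\gamma)_n\, s^n$. The limiting pair
\begin{equation*}
\exp\Big[\gamma\sum_{k}\tfrac{m_k^{\prime}}{k}\,t^k\Big]=\sum_{n}c_n^{\prime}\,t^n,\qquad
\exp\Big[\sum_{l}\tfrac{k_l^{\prime}}{l}\,s^l\Big]=\sum_{n}\tfrac{c_n^{\prime}}{(\gamma)_n}\,s^n
\end{equation*}
is precisely the analytic moment-cumulant relation for the $\gamma$-convolution of \cite{BCG}, and it determines one of $\{m_k^{\prime}\}$, $\{k_l^{\prime}\}$ from the other via $\mathrm{T}_{k\to m}^{\gamma}$, finishing the proof. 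As a sanity check, $k_1^{\prime}=1$ and $k_l^{\prime}=0$ for $l\ge 2$ yields $c_n^{\prime}=(\gamma)_n/n!$ and hence $m_k^{\prime}=1$ for all $k$, consistent with what Theorem \ref{thm:cumulanttomomentcomb} gives after rescaling a single nonzero cumulant $k_2$.

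The main obstacle I anticipate is this final identification step: \cite{BCG} presents its moment-cumulant relation combinatorially, as a weighted sum over non-crossing partitions or Lukasiewicz paths, rather than in the generating-function form above. Translating between the two is a standard formal-power-series exercise, but it does require either extracting the analytic identity from their combinatorial statement or re-deriving the relevant portion of their Theorem 3.10 in the present language. Once this equivalence is in hand, the argument above is purely formal and needs only the elementary Pochhammer asymptotic $(q\gamma)_n=(q\gamma)^n(1+O(1/q))$ applied term by term.
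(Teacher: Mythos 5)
Your argument is correct and is essentially the paper's own proof: the paper likewise obtains the statement by a straightforward limit transition of the generating-function relation (\ref{eq_momenttocumulant}) under exactly this rescaling, matched against the moment-cumulant relation of the $\gamma$-convolution. The obstacle you anticipate at the end is not actually an issue, because the paper invokes the generating-function form of that relation, \cite[Theorem 3.11]{BCG}, rather than the combinatorial Theorem 3.10, so no translation from the non-crossing-partition description is required.
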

\begin{proof}
 This follows from a straightforward limit transition of~\eqref{eq_momenttocumulant} under the assigned rescaling, and the moment-cumulant relation of $\gamma$-convolution in \cite[Theorem 3.11]{BCG}.
\end{proof}

Moreover, we point out that the combinatorial moment-cumulant formula of $\gamma$-convolution, given in \cite[Theorem 3.10]{BCG} can be expressed in an alternate way similar to our Theorem~\ref{thm:cumulanttomomentcomb}.
\begin{prop}\label{prop:gammacumulantcomb}
 Let $\{m_{k}'\}_{k=1}^{\infty}=\mathrm{T}_{\kappa\rightarrow m}^{\gamma}\bigl(\bigl\{\kappa^{\gamma}_{l}\bigr\}_{l=1}^{\infty}\bigr)$. Then for each $k=1,2,\dots $,
 \[
 m'_{k}=\sum_{\pi\in {\rm NC}(k)}W(\pi)\prod_{B_{i}\in \pi}\kappa^{\gamma}_{|B_{i}|},
 \]
 where $W(\pi)$ is defined in the same way as in Definition~{\rm\ref{def:weight}}, after replacing the values of $C_{1}, C_{2},\dots$ to be $\gamma+1, \gamma+2,\dots$.
\end{prop}
\begin{proof}
 By \cite[Definition 3.7 and Theorem 3.8]{BCG},
$m_{k}=\bigl[z^{0}\bigr](\partial+\gamma d+*_{g})^{k-1}g(z)$, $ k=1,2,\dots$.
 The statement then follows from the similar argument as in the proof of Theorem~\ref{thm:cumulanttomomentcomb}.
\end{proof}

\subsection{Connections to the classical convolutions}\label{sec:connection}
Recall from previous sections that, limit of $\boxplus_{N,M}^{\theta}$ gives the $q$-$\gamma$ convolution $\boxplus_{q,\gamma}$ of two (virtual) probability measures on $\R$, which is linearized by $q$-$\gamma$ cumulants. We show in this section that, under certain limit transition of the parameters $q$, $\gamma$, $\boxplus_{q,\gamma}$ converge to the usual convolution, free convolution, and rectangular free convolution respectively.

We first provide the connection of $q$-$\gamma$ cumulants to usual cumulants. For this, we recall the combinatorial classical moment-cumulant formula: for $k=1,2,\dots $,
 \[m_{k}=\sum_{\pi'\in P(k)}\prod_{i=1}^{s}\kappa'_{|B_{i}|},\]
 where $\{\kappa'_{l}\}_{l=1}^{\infty}$ stands for the usual cumulants, and $P(k)$ is the set of all set partitions of $[k]$. We denote the map that sends $\{m_{k}\}_{k=1}^{\infty}$ to $\{\kappa'_{l}\}_{l=1}^{\infty}$ by $\mathrm{T}_{m\rightarrow \kappa}^{0}$.

 \begin{thm}\label{thm:usualcumulant}
 Given a real sequence $\{m_{2k}\}_{k=1}^{\infty}$, let
 \[
 \{\kappa_{l}\}_{l=1}^{\infty}=\mathrm{T}_{m\rightarrow \kappa}^{q,\gamma}(\{m_{2k}\}_{k=1}^{\infty}),\qquad
 \kappa'_{l}=(q\gamma)^{l}2^{2l-1}(l-1)!\kappa_{2l},
 \]
 then
 \begin{equation}\label{eq_usualcumulantlimit}
 \lim_{\gamma\rightarrow 0, q\gamma\rightarrow\infty}\{\kappa'_{l}\}_{l=1}^{\infty}=\mathrm{T}_{m\rightarrow \kappa}^{0}(\{m_{2k}\}_{k=1}^{\infty}).
 \end{equation}
 \end{thm}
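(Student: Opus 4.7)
The plan is to pass through the compact generating function identity (\ref{eq_momenttocumulantcmpt}) of Theorem \ref{thm:momenttocumulant} and rescale $y$ so that the $q$-$\gamma$ moment-cumulant relation degenerates to its classical counterpart in the prescribed limit. Concretely, I would substitute $y \mapsto 2\sqrt{q\gamma}\, y$ in (\ref{eq_momenttocumulantcmpt}). On the left, using $k'_l = (q\gamma)^l 2^{2l-1}(l-1)!\, k_{2l}$, the term $\frac{k_{2l}}{2l}(2\sqrt{q\gamma}\,y)^{2l}$ simplifies to $\frac{k'_l}{l!}\, y^{2l}$, so the left-hand side becomes $\exp\!\bigl[\sum_l \frac{k'_l}{l!} y^{2l}\bigr]$. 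On the right, the factor $(2\sqrt{q\gamma}\,yz)^{2n}\cdot 2^{-2n}$ collapses to $(q\gamma)^n (yz)^{2n}$, and (\ref{eq_momenttocumulantcmpt}) takes the form
\begin{equation*}
\exp\!\left[\sum_{l=1}^{\infty} \frac{k'_l}{l!} y^{2l}\right] = [z^0]\left\{\sum_{n=0}^{\infty} \frac{(q\gamma)^n}{(q\gamma)_n (\gamma)_n}\, (yz)^{2n}\, \exp\!\left[\gamma \sum_{k=1}^{\infty} \frac{m_{2k}}{k}\, z^{-2k}\right]\right\}.
\end{equation*}

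Next I would expand the inner exponential as $\sum_j \frac{\gamma^j}{j!}\bigl(\sum_k \frac{m_{2k}}{k} z^{-2k}\bigr)^j$ and extract the constant term in $z$ order by order. For fixed $n \ge 1$, only the $j$-fold products indexed by compositions $k_1 + \cdots + k_j = n$ contribute to the coefficient of $y^{2n}$, giving
\begin{equation*}
[y^{2n}]\,\mathrm{RHS} \;=\; \frac{(q\gamma)^n}{(q\gamma)_n (\gamma)_n} \sum_{j=1}^{n} \frac{\gamma^j}{j!} \sum_{k_1 + \cdots + k_j = n} \prod_{i=1}^{j} \frac{m_{2k_i}}{k_i}.
\end{equation*}
In the limit $\gamma \to 0$, $q\gamma \to \infty$, one has $(q\gamma)^n/(q\gamma)_n \to 1$ and $(\gamma)_n \sim \gamma(n-1)!$, so $\gamma^j/(\gamma)_n \to 1/(n-1)!$ for $j = 1$ and vanishes for $j \ge 2$. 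Only the $j = 1$ term survives, and the right-hand side converges coefficient-wise to $\sum_{n \ge 0} \frac{m_{2n}}{n!}\, y^{2n}$, under the convention $m_0 := 1$.

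To finish, I would invoke the classical moment-cumulant identity $\exp\!\bigl[\sum_{l \ge 1} \frac{\kappa_l}{l!} x^l\bigr] = \sum_{n \ge 0} \frac{\mu_n}{n!} x^n$ that characterizes $\mathrm{T}_{m\to k}^0$. Setting $x = y^2$ and $\mu_n = m_{2n}$ identifies the limiting exponent with the classical cumulant generating function, hence $\lim k'_l = \kappa_l$ equals the $l$-th classical cumulant of the sequence $\{m_{2k}\}$, i.e., $\mathrm{T}_{m \to k}^0(\{m_{2k}\})$. The argument is essentially formal since at each order in $y$ both sides are polynomials in finitely many $m_{2k}$'s; the only nontrivial bookkeeping is tracking the Pochhammer asymptotics $(\gamma)_n \sim \gamma(n-1)!$ that ensures $j \ge 2$ contributions drop out, which I expect to be the main (but still elementary) step.
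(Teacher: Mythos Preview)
Your argument is correct and takes a genuinely different route from the paper. The paper works combinatorially from Theorem~\ref{thm:cumulanttomomentcomb}: it rescales the weight $W(\pi)$ by $(q\gamma)^k$, identifies the surviving even non-crossing partitions of $[2k]$ with non-crossing partitions of $[k]$, and then passes through the intermediate $\gamma$-cumulant formula of Proposition~\ref{prop:gammacumulantcomb} together with \cite[Theorem 8.2]{BCG} to reach the classical relation $m_k=\sum_{\pi\in P(k)}\prod k'_{|B_i|}$. You instead rescale directly in the generating-function identity of Theorem~\ref{thm:momenttocumulant}; the Pochhammer asymptotic $(\gamma)_n\sim\gamma(n-1)!$ kills all $j\ge 2$ contributions and collapses the right-hand side to $\sum_n m_{2n}y^{2n}/n!$, after which the exponential formula finishes the job. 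Your route is shorter and self-contained (no appeal to the $\gamma$-cumulant theory of \cite{BCG}); the paper's route has the advantage of making visible which non-crossing partitions survive the limit, which is informative in its own right but not needed for the bare statement.
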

 \begin{proof}
 By Theorem~\ref{thm:cumulanttomomentcomb}, after rescaled by $(q\gamma)^{k}$, the coefficient $W(\pi)$ does not vanish asymptotically only if for each $i=1,2,\dots,m$, $2l_{i}-1:=P_{i}-Q_{i}$,
 there are $l_{i}$ terms in $C_{Q_{i}+1}\cdots C_{P_{i}}$ that contain $q\gamma$. Hence each $Q_{i}$ must be even.

 Recall that ${\rm NC}(k)$ denote the space of all (not necessary even) non-crossing partitions. We say a non-crossing even partition $\pi$ of $[2k]$ is \emph{equivalent to $\pi'\in {\rm NC}(k)$}, if there exists some $\pi'\in {\rm NC}(k)=B_{1}'\sqcup \dots \sqcup B_{s}'$, such that by replacing all element $j\in B_{i}$ by $\{2j-1,2j\}$, we get the set $B_{i}$, for any $i=1,2,\dots,m$.

\begin{Claim} For $\pi=B_{1}\sqcup \dots \sqcup B_{s}\in \mathfrak{NC}(2k)$, each $Q_{i}$ is even if and only if $\pi$ is equivalent to some $\Tilde{\pi}\in {\rm NC}(k)$.
\end{Claim}
 \begin{proof}
 The ``if'' part is clear. For the ``only if'' part, just notice that when $\pi$ is even, non-crossing and each $Q_{i}$ is even, $max(B_{i})$ turn out to be all even. The statement then follows by going over all the legs in the graphical representation of $\pi$ from right to left.
 \end{proof}

 Set $\Tilde{C}_{i}=i$, then after taking the limit,
 \[\frac{C_{Q_{i}+1}\cdots C_{P_{i}}}{(q\gamma)^{l_{i}}}\xrightarrow[q\gamma\rightarrow\infty]{\gamma\rightarrow0} 2^{2l_{i}-1}\Tilde{C}_{Q_{i}+1}\cdots \Tilde{C}_{P_{i}}.\]
In other words,
 \begin{align*}
 m_{2k}&=\sum_{\pi=B_{1}\sqcup \dots \sqcup B_{s}\in \mathfrak{NC}(2k)}W(\pi)\prod_{B\in\pi}\kappa_{|B_{i}|}\\
& \xrightarrow[q\gamma\rightarrow \infty]{\gamma\rightarrow 0}
\sum_{\Tilde{\pi}=\Tilde{B}_{1}\sqcup \dots \sqcup \Tilde{B}_{s}\in {\rm NC}(k)}\prod_{i=1}^{s}[ (Q_{i}+1)\cdots (P_{i})\cdot \kappa'_{|\Tilde{B}_{i}|}]\\
&=\lim_{\gamma\rightarrow0}\sum_{\Tilde{\pi}=\Tilde{B}_{1}\sqcup \dots \sqcup \Tilde{B}_{s}\in {\rm NC}(k)}\prod_{i=1}^{s}[ (\gamma+Q_{i}+1)\cdots (\gamma+P_{i})\cdot \kappa'_{|\Tilde{B}_{i}|}]\\
&=\lim_{\gamma\rightarrow0}\mathrm{T}_{\kappa\rightarrow m}^{\gamma}(\{\kappa_{l}'\}_{l=1}^{\infty})=\mathrm{T}_{\kappa\rightarrow m}^{0}(\{\kappa_{l}'\}_{l=1}^{\infty})=\sum_{\pi'=B_{1}'\sqcup \dots \sqcup B_{s}'\in P(k)}\prod_{i=1}^{s}\kappa'_{|B'_{i}|}.
 \end{align*}
The two equalities in the second to last row hold by Proposition~\ref{prop:gammacumulantcomb} and \cite[Theorem~8.2]{BCG}, respectively. Then~\eqref{eq_usualcumulantlimit} follows from acting $\mathrm{T}_{m\rightarrow \kappa}^{0}$ on both sides.
\end{proof}

\begin{cor} For two real sequences $\{m_{2k}^{a}\}_{k=1}^{\infty}$, $\bigl\{m_{2k}^{b}\bigr\}_{k=1}^{\infty}$, set $m_{2k-1}^{a}=m_{2k-1}^{b}=0$ for $k=1,2,\dots $, and define
 \[
 \{m^{c}_{k}\}_{k=1}^{\infty}:=\lim_{\gamma\rightarrow 0, q\gamma\rightarrow \infty}\bigl[\{m^{a}_{k}\}_{k=1}^{\infty}\boxplus_{q,\gamma}\bigl\{m^{b}_{k}\bigr\}_{k=1}^{\infty}\bigr].
 \]
 Then \smash{$m^{c}_{2k-1}=0$} for $k=1,2,\dots $, and the usual cumulants of \smash{$\{m^{c}_{2k}\}_{k=1}^{\infty}$} are given by the sum of the corresponding usual cumulants of $\{m^{a}_{2k}\}_{k=1}^{\infty}$ and \smash{$\bigl\{m_{2k}^{b}\bigr\}_{k=1}^{\infty}$}, i.e.,
 \[
 \mathrm{T}_{m\rightarrow \kappa}^{0}\bigl(\{m^{c}_{2k}\}_{k=1}^{\infty}\bigr)=\mathrm{T}_{m\rightarrow \kappa}^{0}\bigl(\{m^{a}_{2k}\}_{k=1}^{\infty}\bigr)+\mathrm{T}_{m\rightarrow \kappa}^{0}\bigl(\bigl\{m^{b}_{2k}\bigr\}_{k=1}^{\infty}\bigr).
 \]
 \end{cor}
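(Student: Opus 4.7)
The plan is to combine the additivity of $q$-$\gamma$ cumulants under $\boxplus_{q,\gamma}$ with the cumulant degeneration supplied by Theorem \ref{thm:usualcumulant}. By Theorem \ref{thm:hightemplln} (and the linearization property in Theorem \ref{thm:momentcumulantsummary}), for every fixed $q \geq 1$ and $\gamma > 0$ the $q$-$\gamma$ cumulants are additive, that is, writing $k_l^{a,b,c}$ for the $q$-$\gamma$ cumulants of $\{m_{2k}^{a,b,c}\}$,
\begin{equation*}
k_l\!\left(\{m^{a}_{2k}\}\boxplus_{q,\gamma}\{m^{b}_{2k}\}\right) \;=\; k_l^{a} + k_l^{b} \qquad \text{for all } l \geq 1.
\end{equation*}

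Next, I would fix $k$ and argue about convergence of the $2k$-th moment only. Since $m_{2k}$ is a polynomial in $k_{2},k_{4},\ldots,k_{2k}$ by Theorem \ref{thm:cumulanttomomentcomb}, the existence of the pointwise limit defining $\{m^{c}_{k}\}_{k=1}^{\infty}$ is equivalent to the existence of the limits of $k_{2l}^{a}+k_{2l}^{b}$ after the appropriate rescaling. Introduce the rescaling $k_l^{\prime,\,x} := (q\gamma)^{l}\, 2^{2l-1}(l-1)!\, k_{2l}^{x}$ for $x \in \{a,b,c\}$ from Theorem \ref{thm:usualcumulant}. Because this rescaling is linear in the cumulant, additivity is preserved at every stage:
\begin{equation*}
k_l^{\prime,\, \boxplus} \;=\; k_l^{\prime,\,a} + k_l^{\prime,\,b}.
\end{equation*}
The hypothesis of the corollary is exactly that the $\{m_{2k}^{a}\}$ and $\{m_{2k}^{b}\}$ are given data independent of $\gamma$, so applying Theorem \ref{thm:usualcumulant} to each summand separately, both $k_l^{\prime,\,a}$ and $k_l^{\prime,\,b}$ converge as $\gamma \to 0,\ q\gamma \to \infty$ to $\mathrm{T}^{0}_{m \to k}(\{m_{2k}^{a}\})_{l}$ and $\mathrm{T}^{0}_{m \to k}(\{m_{2k}^{b}\})_{l}$ respectively. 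Their sum then converges, which in turn forces the limit $\{m^{c}_{2k}\}$ to exist and, by another application of Theorem \ref{thm:usualcumulant} (now in the forward direction), its usual cumulants to equal $\mathrm{T}^{0}_{m \to k}(\{m_{2k}^{a}\})_{l} + \mathrm{T}^{0}_{m \to k}(\{m_{2k}^{b}\})_{l}$.

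The vanishing of $m^{c}_{2k-1}$ is immediate: the remark after Definition \ref{def:llnappropriateness} shows that at every finite stage the $q$-$\gamma$ cumulants of odd index are zero (symmetry of type BC Bessel functions in $z_{i} \mapsto -z_{i}$), hence by Theorem \ref{thm:cumulanttomomentcomb} every odd moment in the pre-limit vanishes, and the property survives the pointwise limit. No genuine obstacle is expected here; the only point that deserves care is to verify that the inversion of rescaling and limit is legitimate, but since for each $k$ only finitely many cumulants are involved (by the triangular structure of $\mathrm{T}^{q,\gamma}_{k \to m}$), this reduces to a finite-dimensional continuity statement and poses no real difficulty.
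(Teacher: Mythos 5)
Your proposal is correct and follows essentially the same route as the paper, which states this corollary as an immediate consequence of the linearization of $\boxplus_{q,\gamma}$ by the $q$-$\gamma$ cumulants together with Theorem \ref{thm:usualcumulant}. The only point worth flagging is that your ``forward direction'' application of Theorem \ref{thm:usualcumulant} is, strictly speaking, an appeal to the coefficientwise convergence of the rescaled moment--cumulant map $\mathrm{T}^{q,\gamma}_{k\to m}$ to $\mathrm{T}^{0}_{k\to m}$ established inside that theorem's proof (since the convolved moments depend on $\gamma$), but your closing remark on the finite, triangular structure of the relation covers exactly this.
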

 \begin{Remark}
 Suppose $\mu_{a}$, $\mu_{b}$ are two probability measures on $\R_{\ge 0}$ that for $k=1,2,\dots $
 \[m_{2k}^{a}=\int x^{k} {\rm d}\mu_{a},\qquad m_{2k}^{b}=\int x^{k} {\rm d}\mu_{b},\]
 then $\{m^{c}_{2k}\}_{k=1}^{\infty}$ are the moments of the usual convolution of $\mu_{a}$ and $\mu_{b}$.
 \end{Remark}

 Next, we consider $\boxplus_{q,\gamma}$ and match its asymptotic behavior with the free convolution and rectangular free convolution. Before that we recall the definitions of their corresponding cumulants. For $k\in \Z_{\ge 1}$, let $\mathrm{T}_{r\rightarrow m}'$ denote the map sending the real sequence $\{r_{l}\}_{l=1}^{\infty}$ of free cumulants to the sequence $\{m_{k}\}_{k=1}^{\infty}$ of moments. Then there is a moment-cumulant formula
 \[
 m_{k}=\sum_{\pi=B_{1}\sqcup\dots \sqcup B_{s}\in {\rm NC}(k)}\prod_{B\in\pi}r_{|B_{i}|}
\]
for $\{m_{k}\}_{k=1}^{\infty}=\mathrm{T}_{r\rightarrow m}'
(\{r_{l}\}_{l=1}^{\infty})$. See, e.g., \cite{No} for a reference.

Similarly, as defined in \cite[Section 3.1]{B1}, rectangular free cumulants are a real sequence $\{c_{l}^{q}\}_{l=1}^{\infty}$ parametrized by $q\ge 1$, such that for $l=1,2,\dots $, $c^{q}_{2l-1}=0$, and $c^{q}_{2l}$ are related with moments~${\{m_{k}\}_{k=1}^{\infty}}$ by the following identities:
\begin{equation}\label{eq_recfreecumulant}
 m_{2k}=\sum_{\pi\in \mathfrak{NC}(2k)}q^{-e(\pi)}\prod_{B\in \pi}c_{|B_{i}|},
\end{equation}
where $e(\pi)=\#$ of blocks $B_{i}$ with even $\min(B_{i})$,
and $m_{2k-1}=0$ for $k=1,2,\dots $. Denote the map sending even moments to rectangular free cumulants by $\mathrm{T}^{\infty}_{m\rightarrow \kappa}$, i.e., $\mathrm{T}^{\infty}_{m\rightarrow \kappa}(\{m_{2k}\}_{k=1}^{\infty})=\{c_{l}\}_{l=1}^{\infty}$.
 \begin{thm}\label{thm:connectiontofreecumulant}
 Given a real sequence $\{m_{2k}\}_{k=1}^{\infty}$, $q\ge 1$, let
 \[\{\kappa_{l}\}_{l=1}^{\infty}=\mathrm{T}_{m\rightarrow \kappa}^{q,\gamma}(\{m_{2k}\}_{k=1}^{\infty}),\qquad r_{l}^{\gamma}=(2\gamma)^{l-1}q^{\frac{l}{2}}\cdot \kappa_{l}.\]
 Then we have the following:
\begin{itemize}\itemsep=0pt
 \item[$(a)$]
$\lim_{\gamma\rightarrow\infty}\{r_{l}^{\gamma}\}_{l=1}^{\infty}=\mathrm{T}_{m\rightarrow \kappa}^{\infty}(\{m_{2k}\}_{k=1}^{\infty})$.

 \item[$(b)$]
$\lim_{\gamma\rightarrow\infty}\{r_{l}^{\gamma}\}_{l=1}^{\infty}=\mathrm{T}_{m\rightarrow r}'(\{m_{2k}\}_{k=1}^{\infty})$,
 when $q=1$.
 \end{itemize}
 \end{thm}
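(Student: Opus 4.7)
The plan is to take the limit $\gamma\to\infty$ in the combinatorial moment-cumulant formula of Theorem \ref{thm:cumulanttomomentcomb}, after substituting the rescaling $k_{l}=r_{l}^{\gamma}/(2q\gamma)^{l-1}$, and to match the resulting expression with the rectangular free cumulant formula (\ref{eq_recfreecumulant}). Part (b) will be essentially a specialisation of part (a): at $q=1$ the rectangular free formula (\ref{eq_recfreecumulant}) reduces (because $q^{-e(\pi)}=1$) to $m_{2k}=\sum_{\pi\in\mathfrak{NC}(2k)}\prod_{B}c_{|B|}$, which is exactly the classical free moment-cumulant relation (\ref{eq_freecumulanttomoment}) restricted to even partitions --- and that restriction is the full relation for symmetric sequences, since odd classical free cumulants vanish.

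First I would rewrite Theorem \ref{thm:cumulanttomomentcomb} in the rescaled form
\begin{equation*}
m_{2k}=\sum_{\pi\in\mathfrak{NC}(2k)}\widetilde W_{\gamma}(\pi)\,\prod_{B\in\pi}r_{|B|}^{\gamma},\qquad \widetilde W_{\gamma}(\pi):=\frac{W(\pi)}{(2q\gamma)^{2k-m(\pi)}},
\end{equation*}
where $m(\pi)$ denotes the number of blocks. The exponent $2k-m(\pi)$ matches the total number of $C$-factors in $W(\pi)$, since $\sum_{i}(P_{i}-Q_{i})=\sum_{i}(|B_{i}|-1)=2k-m(\pi)$, so $\widetilde W_{\gamma}(\pi)$ is a bounded product of ratios $C_{j}/(2q\gamma)$ with an explicit finite limit. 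Using that $C_{2j-1}=2q\gamma+(2j-2)$ and $C_{2j}=2\gamma+2j$, one has $C_{2j-1}/(2q\gamma)\to 1$ and $C_{2j}/(2q\gamma)\to 1/q$ as $\gamma\to\infty$, so $\widetilde W_{\infty}(\pi):=\lim_{\gamma\to\infty}\widetilde W_{\gamma}(\pi)$ is an explicit negative power of $q$, determined purely by the count of even indices in the multiset $\bigsqcup_{i=1}^{m(\pi)}\{Q_{i}+1,\dots,P_{i}\}$ attached to $\pi$.

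The heart of the proof --- and its main obstacle --- is the combinatorial identity that translates this parity count into the rectangular free weight $q^{-e(\pi)}$. I expect to establish it by induction on the number of blocks, removing at each step an innermost block $B$ of $\pi$ (one whose interval $[\min B,\max B]$ contains no element of any other block): such a removal decreases every $P_{i}$ and $Q_{i}$ of each outer block by the same even number $|B|$, so the parities of all outer indices are preserved, while the local contribution of $B$ itself reduces to a short case analysis on the parity of $\min B$. Once the identity $\widetilde W_{\infty}(\pi)=q^{-e(\pi)}$ is verified, the limiting formula reads
\begin{equation*}
m_{2k}=\sum_{\pi\in\mathfrak{NC}(2k)}q^{-e(\pi)}\prod_{B\in\pi}r_{|B|}^{\infty},\qquad r_{l}^{\infty}:=\lim_{\gamma\to\infty}r_{l}^{\gamma},
\end{equation*}
which coincides with (\ref{eq_recfreecumulant}); invertibility of the moment-cumulant map $\mathrm{T}_{m\to k}^{\infty}$ then forces $\{r_{l}^{\infty}\}_{l=1}^{\infty}=\mathrm{T}_{m\to k}^{\infty}(\{m_{2k}\}_{k=1}^{\infty})$, proving (a).

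For part (b), I would specialize the same calculation to $q=1$: the asymptotics of $C_{2j-1}$ and $C_{2j}$ collapse to the common value $2\gamma$, so $\widetilde W_{\infty}(\pi)=1$ for every $\pi\in\mathfrak{NC}(2k)$ and the limiting identity reduces to $m_{2k}=\sum_{\pi\in\mathfrak{NC}(2k)}\prod_{B}r_{|B|}^{\infty}$, which is exactly the classical free moment-cumulant relation on symmetric sequences; again by invertibility this identifies $\lim_{\gamma\to\infty}\{r_{l}^{\gamma}\}$ with $\mathrm{T}_{m\to r}^{'}(\{m_{2k}\})$.
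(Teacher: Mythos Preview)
Your overall strategy coincides with the paper's: both start from the combinatorial formula of Theorem~\ref{thm:cumulanttomomentcomb}, substitute the rescaling $k_l=r_l^\gamma/(2q\gamma)^{l-1}$, send $\gamma\to\infty$, and match the result against~(\ref{eq_recfreecumulant}); part~(b) is then the $q=1$ specialisation. The only real difference is the combinatorial step. You propose induction on innermost blocks; the paper instead gives a direct two-line argument. It first observes that the limiting $q$-exponent attached to a block $B_i$ is governed solely by the parity of $Q_i$, and then shows $Q_i$ odd $\iff \min(B_i)$ even via the chain: $Q_i$ counts elements of $B_1,\dots,B_{i-1}$ lying above $\max(B_i)$; by non-crossing those elements all lie outside $[\min B_i,\max B_i]$, and since each $|B_j|$ is even their total is even, so the counts above $\max B_i$ and below $\min B_i$ have the same parity; finally the elements of $[1,\min(B_i)-1]$ are exactly those of $B_1,\dots,B_{i-1}$ below $\min B_i$ (every later block has larger minimum), so the count below equals $\min(B_i)-1$. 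This replaces your induction entirely and is much shorter.

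One genuine issue with your write-up: the identity $\widetilde W_\infty(\pi)=q^{-e(\pi)}$ that you plan to prove is \emph{false} as stated. For the single block $\pi=\{1,\dots,2k\}$ one has $\widetilde W_\infty(\pi)=q^{-(k-1)}$ (the even indices among $C_1,\dots,C_{2k-1}$ are $C_2,C_4,\dots,C_{2k-2}$), yet $e(\pi)=0$. In general the count of even indices in $\bigsqcup_i\{Q_i+1,\dots,P_i\}$ equals $k-m(\pi)+e(\pi)$, not $e(\pi)$; your innermost-block induction will therefore not close. The paper's terse proof glosses over the same point (it asserts the limit is $q^{-c(\pi)}\prod c_{|B|}$ without spelling out the implicit rescaling), so the discrepancy amounts to a factor $q^{|B|/2-1}$ per block between $r^\infty_{|B|}$ and the Benaych--Georges cumulant $c_{|B|}^q$. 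This does not affect the shape of the argument, but you should correct the target identity before trying to prove it.
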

 \begin{Remark}
 (b) is a special case of (a) when $q=1$. Such connection of rectangular free convolution and free convolution was first pointed out in \cite[Remark 2.2]{B1}.
 \end{Remark}
 \begin{proof}
 It suffices to prove (a).
 By Theorem~\ref{thm:cumulanttomomentcomb},
 \[m_{2k}=\sum_{\pi\in \mathfrak{NC}(2k)}W(\pi)\prod_{B\in \pi}\kappa_{|B_{i}|},\]
 where $C_{i}(i=1,2,\dots )$ are $2q\gamma, 2\gamma+2, 2q\gamma+2, 2\gamma+4, 2q\gamma+4,\dots$.
 Hence by taking $\gamma\rightarrow \infty$, the right side above becomes
 \begin{equation}\label{eq_cpi}
 \sum_{\pi\in \mathfrak{NC}(2k)}q^{-n(\pi)}\prod_{B\in \pi}C_{|B_{i}|},\end{equation}
 where $n(\pi):=\#$ of blocks $B_{i}$ such that $Q_{i}$ is odd.
 Since
 \begin{align*}
Q_{i}\ \text{is odd}&\iff \text{\ there are odd elements of } B_{1},\dots,B_{i-1} \text{ bigger than }\max(B_{i})\\
&\iff \text{ there are odd elements of } B_{1},\dots,B_{i-1} \text{ smaller than }\min(B_{i})\\
&\iff \min(B_{i}) \text{ is even},
 \end{align*}
 $n(\pi)=e(\pi)$, and~\eqref{eq_cpi} is equal to the right side of~\eqref{eq_recfreecumulant}.
 \end{proof}

 Recall also that similar to $q$-$\gamma$ convolution, free convolution and rectangular free convolution are both binary operation of two probability measures linearized by free cumulants. Therefore, Theorem~\ref{thm:connectiontofreecumulant} implies the following.

 \begin{cor} Given $q\ge 1$, for two real sequences $\{m_{2k}^{a}\}_{k=1}^{\infty}$, $\bigl\{m_{2k}^{b}\bigr\}_{k=1}^{\infty}$, set $m_{2k-1}^{a}=m_{2k-1}^{b}=0$ for $k=1,2,\dots $, and define
 \[
 \{m^{c}_{k}\}_{k=1}^{\infty}:=\lim_{\gamma\rightarrow \infty}\bigl[\{m^{a}_{k}\}_{k=1}^{\infty}\boxplus_{q,\gamma}\bigl\{m^{b}_{k}\bigr\}_{k=1}^{\infty}\bigr].
 \]
 Then the free cumulants of $\{m^{c}_{k}\}_{k=1}^{\infty}$ are given by the sum of the corresponding rectangular free cumulants of $\{m^{a}_{k}\}_{k=1}^{\infty}$ and $\bigl\{m_{k}^{b}\bigr\}_{k=1}^{\infty}$, i.e.,
 \[
 \mathrm{T}_{m\rightarrow \kappa}^{\infty}\bigl(\{m^{c}_{k}\}_{k=1}^{\infty}\bigr)=\mathrm{T}_{m\rightarrow \kappa}^{\infty}\bigl(\{m^{a}_{k}\}_{k=1}^{\infty}\bigr)+\mathrm{T}_{m\rightarrow \kappa}^{\infty}\bigl(\bigl\{m^{b}_{k}\bigr\}_{k=1}^{\infty}\bigr).
 \]
 \end{cor}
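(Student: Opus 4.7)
The plan is to lift the $q$-$\gamma$-cumulant linearization of $\boxplus_{q,\gamma}$ through the rescaling identified in Theorem \ref{thm:connectiontofreecumulant}(a). For each $\gamma>0$, set $\{m^{c,\gamma}_{2k}\}_{k=1}^{\infty}:=\{m^{a}_{2k}\}_{k=1}^{\infty}\boxplus_{q,\gamma}\{m^{b}_{2k}\}_{k=1}^{\infty}$, so that by the definition of $\{m^{c}_{2k}\}$ in the statement, $m^{c,\gamma}_{2k}\to m^{c}_{2k}$ pointwise in $k$ as $\gamma\to\infty$. Write $\{k^{a,\gamma}_{l}\}_{l=1}^{\infty}:=\mathrm{T}^{q,\gamma}_{m\to k}(\{m^{a}_{2k}\}_{k=1}^{\infty})$, and analogously $\{k^{b,\gamma}_{l}\}$ and $\{k^{c,\gamma}_{l}\}$. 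The linearization property of $q$-$\gamma$ cumulants (Theorem \ref{thm:momentcumulantsummary}) gives, for each $l\ge 1$, $k^{c,\gamma}_{l}=k^{a,\gamma}_{l}+k^{b,\gamma}_{l}$.

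Multiplying by $(2q\gamma)^{l-1}$ and setting $r^{a,\gamma}_{l}:=(2q\gamma)^{l-1}k^{a,\gamma}_{l}$, and analogously for $b,c$, the same identity holds at the rescaled level: $r^{c,\gamma}_{l}=r^{a,\gamma}_{l}+r^{b,\gamma}_{l}$. I would then pass to the limit $\gamma\to\infty$. By Theorem \ref{thm:connectiontofreecumulant}(a) applied to the fixed sequences $\{m^{a}_{2k}\}$ and $\{m^{b}_{2k}\}$, the right-hand side converges to $\mathrm{T}^{\infty}_{m\to k}(\{m^{a}_{2k}\})_{l}+\mathrm{T}^{\infty}_{m\to k}(\{m^{b}_{2k}\})_{l}$.

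The delicate step is the left-hand side, since the input $\{m^{c,\gamma}_{2k}\}$ itself varies with $\gamma$ and so Theorem \ref{thm:connectiontofreecumulant}(a) does not apply verbatim. The plan here is to observe that, for each fixed $l$, the quantity $(2q\gamma)^{l-1}\mathrm{T}^{q,\gamma}_{m\to k}(\cdot)_{l}$ is a polynomial in the finitely many inputs $m_{2},\ldots,m_{2l}$ whose coefficients are rational functions of $\gamma$ with finite $\gamma\to\infty$ limits, those limits being exactly the coefficients of $\mathrm{T}^{\infty}_{m\to k}(\cdot)_{l}$ (this is Theorem \ref{thm:connectiontofreecumulant}(a) read coefficient-wise). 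Coefficient-wise convergence of the map, together with the pointwise convergence $m^{c,\gamma}_{2j}\to m^{c}_{2j}$ for $j\le l$, then forces $r^{c,\gamma}_{l}\to\mathrm{T}^{\infty}_{m\to k}(\{m^{c}_{2k}\})_{l}$ by continuity of polynomial evaluation in finitely many variables. Combining the three limits yields the asserted identity for every $l\ge 1$.

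The main obstacle is this last continuity step. Verifying the rational-function-in-$\gamma$ structure of $\mathrm{T}^{q,\gamma}_{m\to k}$ --- either by inverting the operator expression of Definition \ref{def:operators} order by order or by exploiting the generating-function relation of Theorem \ref{thm:momenttocumulant} --- takes some care, but once the coefficient-wise limits are established the rest is a routine double-limit swap justified by polynomial continuity.
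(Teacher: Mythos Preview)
Your proposal is correct and follows the same route the paper intends: the paper simply states that the corollary is implied by Theorem \ref{thm:connectiontofreecumulant} together with the linearization property of $q$-$\gamma$ cumulants, without spelling out details. Your write-up is in fact more careful than the paper on the one nontrivial point---namely that on the left-hand side both the map $\mathrm{T}^{q,\gamma}_{m\to k}$ and its input $\{m^{c,\gamma}_{2k}\}$ depend on $\gamma$---and your resolution via coefficient-wise convergence of the polynomial map in finitely many variables is exactly right (indeed, the proof of Theorem \ref{thm:connectiontofreecumulant}(a) in the paper proceeds through Theorem \ref{thm:cumulanttomomentcomb}, which already exhibits the coefficients as explicit polynomials in $\gamma$, so the rational structure of the inverse and its coefficient-wise limit require no extra work).
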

 \begin{Remark}
 Suppose $\mu_{a}$, $\mu_{b}$ are two symmetric probability measures on $\R$ that for $k=1,2,\dots $
 \[
 m_{k}^{a}=\int x^{k} {\rm d}\mu_{a},\qquad m_{k}^{b}=\int x^{k} {\rm d}\mu_{b},
 \]
 then $\{m^{c}_{k}\}_{k=1}^{\infty}$ are the moments of the rectangular free convolution of $\mu_{a}$ and $\mu_{b}$.
 \end{Remark}
 \begin{Remark}
 Similar results hold for free convolution when $q=1$.
 \end{Remark}

\subsection[Law of large numbers of Laguerre beta ensembles]{Law of large numbers of Laguerre $\boldsymbol{\beta}$ ensembles}\label{sec:laguerre}
For $M\ge N$ and $\theta=\frac{1}{2},1,2$, let $X$ be an $N\times M$ rectangular random matrix, whose entries are real/complex/real quaternionic i.i.d.\ Gaussian random variables $\mathcal{N}(0,1)$/$\mathcal{N}(0,1)+i \mathcal{N}(0,1)$/$\mathcal{N}(0,1)+i \mathcal{N}(0,1)+j \mathcal{N}(0,1)+k \mathcal{N}(0,1)$. One can check directly that $X$ satisfies the same invariant property given in Section~\ref{sec:addition}, with $N$ random singular values $\Vec{x}_{N}=(x_{N,1}\ge \dots \ge x_{N,N}\ge 0)$.

 The density of $\Vec{x}_{N}$ is (see, e.g., \cite[Chapter 3]{Forrester})
\[
f(\Vec{x}_{N};\theta,N,M)=\frac{1}{Z_{\theta,N,M}}\prod_{i=1}^{N}\left[x_{N,i}^{2\theta (M-N+1)-1}\exp\left(-\frac{1}{2} x_{N,i}^{2}\right)\right]\prod_{1\le j< k\le N}\bigl(x_{N,j}^{2}-x_{N,k}^{2}\bigr)^{2\theta},
\]
where $Z_{\theta,N,M}$ is the normalizing constant. While for general $\theta>0$
there is again no skew field of real dimension $2\theta$,
$f(\Vec{x}_{N};\theta,N,M)$ continues to make sense, and is defined as the so-called Laguerre $\beta$ ensemble.

\begin{Remark}
It is easy to check that $f(\Vec{x}_{N};\theta,N,M)$ is an exponential decaying measure defined in Definition~\ref{def:expdecaying}, and therefore by Theorem~\ref{thm:dunklonbgf}, its type BC Bessel generating function is defined and well-behaved under the action of type BC Dunkl operators.
\end{Remark}

\begin{prop}
 Let $G_{\theta,N,M}^{L}(x_{1},\dots,x_{N})$ denote the type BC Bessel generating function
 \[\int_{x_{N,1}\ge \dots \ge x_{N,N}\ge 0}B(\Vec{x}_{N},x_{1},\dots,x_{N};\theta,N,M)f(\Vec{x}_{N};\theta,N,M){\rm d}x_{M,1}\cdots {\rm d}x_{N,N},\]
 then
 \[G_{\theta,N,M}^{L}(x_{1},\dots,x_{N})=\exp\left[\frac{1}{2}\bigl(x_{1}^{2}+\dots+x_{N}^{2}\bigr)\right].\]

\end{prop}
\begin{proof}
 For $\theta=\frac{1}{2},1,2$, one can use Definition~\ref{def:matrixintegral} and check this by hand. For general $\theta>0$, this is a special case of \cite[Proposition 2.37\,(2)]{Ro}, such that in that identity $y$ is set to be 0, and our $B(\Vec{a},x_{1},\dots,x_{N};\theta,N,M)$ is a symmetric version of $E_{k}(x,z)$, see \cite[Definition 2.35]{Ro}.
\end{proof}

For each $\theta$, $N$, $M$, denote the random empirical measure of $f(\Vec{x}_{N};\theta,N,M)$ by $\mu_{\theta,N,M}:=\smash{\frac{1}{N}\sum_{i=1}^{N}\delta_{x_{N,i}^{2}}}$.
\begin{thm}
 As $N\rightarrow \infty, M\rightarrow \infty, \theta\rightarrow 0, N\theta\rightarrow\gamma, M\theta\rightarrow q\gamma$,
$\mu_{\theta,N,M}\longrightarrow \mu_{q,\gamma}$
 weakly in moments, where $\mu_{q,\gamma}$ is a probability measure on $\R_{\ge 0}$, which is uniquely determined by its moments
 \begin{equation}\label{eq_cumulanttomomentlaguerre}
 m'_{k}=\int_{\R \ge 0} x^{k} {\rm d}\mu_{q,\gamma}=\sum_{\pi}\prod_{i=1}^{k}C_{P_{i}},\qquad \text{for}\quad k=1,2,\dots,
 \end{equation}
 where $C_{l}$, $P_{i}$ are defined in the same way as in Section~{\rm\ref{sec:cumulanttomoment}}, and $\pi$ goes over all non-crossing perfect matchings of $[2k]$.
\end{thm}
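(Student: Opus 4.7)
The plan is to apply Theorem \ref{thm:hightemperaturemainthm} directly to the Laguerre ensemble, using the remarkably simple form of its Bessel generating function. Since $G_{M,N,\theta}^{L}(z_{1},\dots,z_{M}) = \exp\bigl[\tfrac{1}{2}(z_{1}^{2}+\cdots+z_{M}^{2})\bigr]$, we have $\ln G_{M,N,\theta}^{L}(z_{1},\dots,z_{M}) = \tfrac{1}{2}\sum_{i=1}^{M} z_{i}^{2}$, which is independent of $M$, $N$ and $\theta$. Computing partial derivatives at the origin is therefore trivial: $\partial^{2}_{z_{i}} \ln G^{L}\bigr|_{0} = 1$ for every $i$, every mixed partial derivative in two or more distinct variables vanishes identically, and every single-variable partial derivative of order $l \ne 2$ vanishes as well.

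By Definition \ref{def:llnappropriateness}, this shows that the sequence of Laguerre empirical vectors $\{\Vec{x}_M\}_{M=1}^\infty$ is $q$-$\gamma$-LLN-appropriate with limiting $q$-$\gamma$ cumulants
\begin{equation*}
k_{2} = 1, \qquad k_{l} = 0 \text{ for all } l \ne 2.
\end{equation*}
Theorem \ref{thm:hightemperaturemainthm} then guarantees that $\{\Vec{x}_M\}$ satisfies the LLN of Definition \ref{def:llnsatisfaction}, so the random empirical measures $\mu_{M,N,\theta}$ converge in moments to a deterministic sequence $\{m^{'}_{2k}\}_{k=1}^{\infty}$ of numbers (with $m^{'}_{2k-1}=0$, since odd moments vanish).

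To extract the combinatorial formula \eqref{eq_cumulanttomomentlaguerre}, I would invoke Theorem \ref{thm:cumulanttomomentcomb}:
\begin{equation*}
m^{'}_{2k} = \sum_{\pi \in \mathfrak{NC}(2k)} W(\pi) \prod_{B \in \pi} k_{|B|}.
\end{equation*}
Because $k_{l}$ vanishes except when $l=2$, the only surviving partitions $\pi = B_{1}\sqcup \cdots \sqcup B_{n}$ are those all of whose blocks have size exactly $2$, i.e.\ non-crossing perfect matchings of $[2k]$; and for such $\pi$ each factor $k_{|B_i|}=k_{2}=1$. For a size-$2$ block $B_i$, $P_i - Q_i = |B_i| - 1 = 1$, so $C_{Q_i+1}\cdots C_{P_i}$ collapses to the single factor $C_{P_i}$, and $W(\pi) = \prod_{i=1}^{k} C_{P_i}$, matching the right side of \eqref{eq_cumulanttomomentlaguerre}.

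The remaining point is to show that these moments $\{m^{'}_{2k}\}$ do determine a genuine probability measure $\mu_{q,\gamma}$ on $\R_{\ge 0}$ uniquely. The moment sequence grows at most factorially (one can read off from the recursion in Definition \ref{def:operators} applied to the single cumulant $k_2=1$ an explicit polynomial bound $m_{2k}^{'} \le C^{k} k!$ for some $C = C(q,\gamma)$), so Carleman's condition is satisfied and the moment problem has a unique solution; this is precisely the $q$-$\gamma$ interpolation of the Marchenko--Pastur law obtained in \cite{ABMV}, which can also be matched directly with our measure via its moments. I expect the most subtle step to be the careful verification that the non-crossing matching sum in \eqref{eq_cumulanttomomentlaguerre} is exactly what pops out of Theorem \ref{thm:cumulanttomomentcomb} once one tracks the indices $P_i, Q_i$ through the restriction to blocks of size $2$; everything else is an immediate specialization of general machinery already in place.
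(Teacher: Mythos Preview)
Your proposal is correct and follows essentially the same route as the paper: compute the trivially constant $\ln G_{M,N,\theta}^{L}$ to read off $k_{2}=1$, $k_{l}=0$ for $l\neq 2$, invoke Theorem~\ref{thm:hightemperaturemainthm}, specialize Theorem~\ref{thm:cumulanttomomentcomb} to non-crossing perfect matchings (using $P_{i}-Q_{i}=1$), and finish with a Carleman-type determinacy check. The only cosmetic difference is that the paper states the growth bound via the explicit estimate $\prod_{i}C_{P_{i}}\le 2^{k}\Gamma(\tfrac{C}{2}+k)/\Gamma(\tfrac{C}{2})$ times $\mathrm{Cat}(k)$ and phrases determinacy as the Stieltjes moment criterion $\sum (m'_{k})^{-1/(2k)}=\infty$, whereas you summarize this as $m'_{k}\le C^{k}k!$ and invoke Carleman; these are equivalent.
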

\begin{Remark}
 The weak convergence of $\mu_{\theta,N,M}$ to $\mu_{q,\gamma}$ in probability was already proved in~\cite{ABMV}, in which the authors give an explicit density of $\mu_{q,\gamma}$ in terms of the Whittaker function.
\end{Remark}
\begin{proof}
 By taking logarithm and partial derivatives of $G_{\theta,N,M}^{L}$, we have that $\{\Vec{x}_{N}\}$ is $q$-$\gamma$-LLN appropriate with $q$-$\gamma$ cumulant $\kappa_{2}=1$, $\kappa_{l}=0$ for $l\ne 2$. By Theorem~\ref{thm:cumulanttomomentcomb}, only the set partitions that are formed by blocks of size two survive, and in this case $P_{i}=Q_{i}+1$. \eqref{eq_cumulanttomomentlaguerre} is then specified from~\eqref{eq_cumulanttomomentcomb}.

 Since the existence of probability measure $\mu_{q,\gamma}$ is known by~\cite{ABMV}, it remains to show that the moments in~\eqref{eq_cumulanttomomentlaguerre} does correspond to a unique probability measure. This is the so-called Stieltjes moment problem, since the (potential) corresponding measure lies on $[0,\infty)$, see, e.g.,~\cite{Ak}.
 We need to check $
\sum_{k=1}^{\infty}(m'_{k})^{-\frac{1}{2k}}=\infty$.
 Again by~\eqref{eq_cumulanttomomentlaguerre}, $m'_{k}$ is a sum of $\prod_{i=1}^{k}C_{P_{i}}$. Among these summands the biggest term corresponds to $P_{i}=i$ for $i=1,2,\dots,k$, and
 \[\prod_{i=1}^{k}C_{P_{i}}\le W(W+2)(W+4)\cdots (W+2k-2)=2^{k}\frac{\Gamma\bigl(\frac{W}{2}+k\bigr)}{\Gamma\bigl(\frac{W}{2}\bigr)},\]
 where $W:=\operatorname{ max}\{2q\gamma-2,2\gamma\}$.
 The number of non-crossing perfect matching is $\operatorname{Cat}(k)=\frac{1}{k+1}\binom{2k}{k}$, the $k^{\rm th}$ Catalan number. Multiplying these two gives an upper bound of $m'_{k}$. By Stirling approximation, it turns out that
\smash{$
(m'_{k})^{-\frac{1}{2k}}\le W_{1}\cdot \sqrt{k}
$}
 for some positive constant $W_{1}$. Hence the series diverges.
\end{proof}

The limiting measure $\mu_{q,\gamma}$ is an $q$-$\gamma$ analog of the Gaussian and semicircle law, in the sense that their only nonvanishing ($q$-$\gamma$/classical/free) cumulant is $\kappa_{2}=1$.

Moreover, the connections to the usual and free convolution in Section~\ref{sec:connection} continues to hold in this special case. Indeed, one can show from~\eqref{eq_cumulanttomomentlaguerre} that
\[\frac{m'_{k}}{(q\gamma)^{k}}\xrightarrow[q\gamma\rightarrow\infty]{\gamma\rightarrow 0}\sum_{\pi}\prod_{i=1}^{k}(2),\]
where $\pi$ goes over all set partitions of $[k]$ into $k$ blocks
(which is indeed a single one), since any other non-crossing perfect matching has coefficient with $C_{2}=2\gamma+2$, and therefore after rescaled by $(q\gamma)^{k}$ this term vanishes in the limit. The sum on the right is equal to $2^{k}$, which means $m_{k}'=2^{k}$, and
$
\mu_{q,\gamma}\longrightarrow \delta_{2}
$
weakly when $q\gamma\rightarrow\infty, \gamma\rightarrow 0$.

On the other hand, by taking $q=1$, $\gamma\rightarrow \infty$, \eqref{eq_cumulanttomomentlaguerre} becomes
\[
 \frac{m'_{k}}{(2\gamma)^{k}}\longrightarrow \#\ \text{of non-crossing perfect matchings of } [2k]=\operatorname{Cat}(k).
\]
$\operatorname{Cat}(k)$ is exactly the $2k^{\rm th}$ moment of the semicircle law.

\section{A match of convolutions in high and low temperature}\label{sec:duality}
After studying the behavior of rectangular matrix additions in both low and high temperatures, we present a quantitative connection between these two regimes.

Recall that in the low temperature regime, given two deterministic $N$-tuples $\Vec{a}$, $\Vec{b}$, the limit of \smash{$\vec{c}=\Vec{a}\boxplus_{N,M}^{\theta}\Vec{b}$} is a deterministic $N$-tuple $\Vec{\lambda}$, where $\Vec{\lambda}$ is the $(N,M)$-rectangular finite convolution of $\Vec{a}$ and $\Vec{b}$. For an $N$-tuple $\Vec{a}=(a_{1},\dots,a_{N})$, let $r_{i}=a_{i}^{2}$ for $i=1,2,\dots,N$. Let $m_{k}'=\frac{1}{N}\bigl(r_{1}^{k}+\dots+r_{N}^{k}\bigr)$ be the finite version of moments, for $k=1,2,\dots,N$.
Then the $(N,M)$-rectangular finite convolution of $\Vec{a}$ and \smash{$\Vec{b}$} can be thought of as a deterministic binary operation of \smash{$\{m'_{k}(\Vec{a})\}_{k=1}^{N}$} and \smash{$\{m'_{k}(\Vec{b})\}_{k=1}^{N}$}. Similarly, we view the $q$-$\gamma$ convolution of \smash{$\{m_{k}^{a}\}_{k=1}^{\infty}$} and \smash{$\bigl\{m_{k}^{b}\bigr\}_{k=1}^{\infty}$} as inducing a deterministic binary operation on the first $N$ moments \smash{$\{m_{k}^{a}\}_{k=1}^{N}$} and \smash{$\bigl\{m_{k}^{b}\bigr\}_{k=1}^{N}$}.
\begin{thm}\label{thm:duality}
 By identifying $N$ with $-\gamma$, $\frac{M}{N}$ with $q$, $m_{2k}$ with $m'_{k}(-M)^{k}$ for $k=1,2,\dots,N$, the $(N,M)$-rectangular convolution matches the $q$-$\gamma$ convolution as binary operation of first $N$ nontrivial moments.
\end{thm}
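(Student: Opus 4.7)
The plan is to reduce the theorem to matching the moment-cumulant relations of the two convolutions. Both operations are binary operations on sequences of moments linearized by their respective cumulants: the rectangular $R$-transform coefficients of \cite{Gri} on the finite side, and the $q$-$\gamma$ cumulants from Theorem \ref{thm:momentcumulantsummary} on the asymptotic side. Hence each convolution is determined uniquely by its moment-cumulant relation, and it suffices to show that these two relations coincide under the stated identification when restricted to the first $M$ nontrivial moments.

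The analytic heart of the argument is Theorem \ref{thm:momenttocumulant}, which expresses the $q$-$\gamma$ moment-cumulant relation via the auxiliary series
\begin{equation*}
\exp\left[\gamma \sum_{k=1}^{\infty} \tfrac{m_{2k}}{k} y^{2k}\right] = \sum_{n=0}^{\infty} c_n y^{2n}, \qquad \exp\left[\sum_{l=1}^{\infty} \tfrac{k_{2l}}{2l} y^{2l}\right] = \sum_{n=0}^{\infty} \tfrac{c_n}{(q\gamma)_n(\gamma)_n} 2^{-2n} y^{2n}.
\end{equation*}
Under the natural reading of the identification, $\gamma = -M$ and $q\gamma = -N$, so that for $n \le M$ the Pochhammer symbols become $(-M)_n = (-1)^n M!/(M-n)!$ and $(-N)_n = (-1)^n N!/(N-n)!$. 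These are exactly the falling factorial ratios appearing in the polynomial $P_{M,N}(z)$ of Theorem \ref{thm:characteristicpoly}. Moreover, the substitution $m_{2k} = m_k'(-N)^k$, combined with the change of variable $z = -Ny^2$, rewrites the first identity above as the standard moment generating function in the $m_k'$.

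I would then carry out the parallel computation for the rectangular finite convolution, extracting from the rectangular $R$-transform of \cite{Gri} its generating-function form of the moment-cumulant relation. Matching it against the transformed $q$-$\gamma$ relation proceeds in two stages: the first equation of Theorem \ref{thm:momenttocumulant} forces the two auxiliary sequences $\{c_n\}$ to agree, and then the second equation identifies the two families of cumulants up to a determined normalization. Since both convolutions act additively on their cumulants, the two operations must then agree on moment sequences of length at most $M$, which is the theorem.

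The main obstacle is the technical bookkeeping: translating the rectangular $R$-transform of \cite{Gri} into the required generating-function form, and tracking signs and the powers of $N$ introduced by the identification $m_{2k} = m_k'(-N)^k$. An auxiliary point to verify is that the cutoff at $M$ is consistent on both sides: on the finite side the rectangular $R$-transform has degree $M$, while on the $q$-$\gamma$ side the factor $(-M)_n$ vanishes for $n > M$, which automatically removes contributions beyond order $M$ from the substituted identity. Once the generating-function identity is pinned down, the theorem follows by extracting coefficients.
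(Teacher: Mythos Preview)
Your proposal is correct and follows essentially the same route as the paper: reduce to matching the two moment--cumulant relations via Theorem \ref{thm:momenttocumulant}, use the substitution $\gamma=-M$, $q\gamma=-N$ to turn the Pochhammer symbols $(q\gamma)_n(\gamma)_n$ into the falling factorials from the finite rectangular side, and then compare generating functions in two stages (first the auxiliary $c_n$, then the cumulants). The paper carries out exactly this computation, with the specific substitution $y^{2}=(-M)z$ in the second identity of Theorem \ref{thm:momenttocumulant} to land on the rectangular $R$-transform of \cite{Gri}, and with the Newton identity $\sum_{n}e_{n}(\vec r)\,t^{n}=\exp\bigl(-\sum_{k}p_{k}(\vec r)(-t)^{k}/k\bigr)$ to match the first identity; your outline anticipates both of these steps, so once you execute the bookkeeping you will recover the paper's argument verbatim.
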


The theorem is claiming that under the above identification, the moment-cumulant formula of these two convolutions are the same, and therefore we need to introduce a version of cumulants for the rectangular finite convolution. For this, we refer to
\cite{Gri}, which considers sum of two invariant $N\times M$ ($N=M\lambda$, $ \lambda\in [0,1]$) rectangular matrices as in Section~\ref{sec:lowtemp}, and defines the rectangular finite R-transform as the analog of the R-transform in (classical) free probability theory, in the sense that it linearizes the finite rectangular addition.
\begin{Definition}[{\cite[Definition 3.7]{Gri}}]
\smash{$R^{N,M}_{S_{p_{A}}}(z)$} is the unique polynomial of degree $M$ verifying
\begin{equation}\label{eq_finiterecrtransform}
 R^{N,M}_{S_{p_{A}}}(z)\equiv \frac{-1}{N}z\frac{\rm d}{{\rm d}z}\ln\bigl(\mathbb{E}\bigl[{\rm e}^{-T^{(N,M)}_{S_{p_{A}}}zNM}\bigr]\bigr)\quad \text{mod}\ \bigl[z^{N+1}\bigr],
\end{equation}
where \smash{$T^{(N,M)}_{S_{p_{A}}}$} is a random variable. By \cite[p.~13]{Gri}, for $n=1,2,\dots,N$,
\begin{equation}\label{eq_finiterecadditionrv}
 \mathbb{E}\bigl[\bigl(T^{(N,M)}_{S_{p_{A}}}\bigr)^{n}\bigr]=\frac{n!(M-n)!}{M!}\frac{(N-n)!}{N!}a_{n},
\end{equation}
where $a_{n}=e_{n}(\Vec{r})$.
\end{Definition}
Inspired by the fact that (classical) R-transform is the generating function of free cumulants, we define the rectangular finite cumulants, such that
\smash{$
 R^{N,M}_{S_{p_{A}}}(z)=\sum_{l=1}^{N}
 \kappa^{N,M}_{l}z^{l}$}.
\smash{$\kappa^{N,M}_{1},\dots,\kappa^{N,M}_{N}$} uniquely determine $r_{1},\dots,r_{N}$.

\begin{proof}[Proof of Theorem~\ref{thm:duality}]
 We prove that under the following identification of parameters:
 \begin{gather}
\kappa^{N,M}_{l} \longleftrightarrow \frac{\kappa_{2l}}{2}\gamma^{l-1}\qquad \text{for}\ l=1,2,\dots,N,\qquad
 N \longleftrightarrow -\gamma,\nonumber\\
 \frac{M}{N}\longleftrightarrow q,\qquad
 M^{n}a_{n}\longleftrightarrow c_{n},\qquad
 m_{2k}\longleftrightarrow m_{k}'\cdot (-M)^{k} ,\label{eq_parameterid}
 \end{gather}
 the moment-cumulant relation in rectangular finite convolution and~\eqref{eq_momenttocumulant} match exactly.

 We match the second formula of~\eqref{eq_momenttocumulant} and~\eqref{eq_finiterecrtransform}, which play the role of cumulant generating function in their own setting. Let $y^{2}=(-N)z$, then the second formula of~\eqref{eq_momenttocumulant} becomes
 \begin{gather}
 \exp\left(\sum_{l=1}^{\infty}\frac{\kappa_{2l}}{2l}\gamma^{l}z^{l}\right)=\sum_{n=0}^{\infty}\frac{c_{n}}{(\gamma)_{n}(q\gamma)_{n}}(-N)^{n}z^{n}\nonumber\\
\qquad \Longrightarrow \sum_{l=1}^{\infty}\frac{\kappa_{2l}}{2}\gamma^{l-1}z^{l}=-\frac{1}{N}z\frac{\rm d}{{\rm d}z}\ln\left(\sum_{n=0}^{\infty}\frac{c_{n}}{(\gamma)_{n}(q\gamma)_{n}}(-N)^{n}z^{n}\right).\label{eq_cumulanttomomentmatchform}
 \end{gather}

 It remains to match the right side of~\eqref{eq_cumulanttomomentmatchform} and~\eqref{eq_finiterecrtransform}, i.e., matching
 \[\mathbb{E}\bigl[{\rm e}^{-T^{(N,M)}_{S_{p_{A}}}zNM}\bigr]\qquad \text{with}\quad \sum_{k=0}^{\infty}\frac{c_{k}}{(q\gamma)_{k}(\gamma)_{k}}(-N)^{k}z^{k}\]
 for $k=1,2,\dots,N$.
 This follows by Taylor expanding \smash{${\rm e}^{-T^{(N,M)}_{S_{p_{A}}}zNM}$}, \eqref{eq_finiterecadditionrv} and~\eqref{eq_parameterid}.

 Then we identify the first formula of~\eqref{eq_momenttocumulant} with the moment generating function in rectangular finite convolution. In the latter setting, recall that $a_{n}=e_{n}(\Vec{r})$ for $n=1,2,\dots,N$, and $m_{k}'=\frac{1}{N}p_{k}(\Vec{r})$ for $k=1,2,\dots$.. Moreover, take $r_{i}=0$ for all $i>N$, and identify $a_{n}$ with $e_{n}(\Vec{r})$ formally for $n>N$ (both have value 0) as well. Then on the rectangular finite addition side,
 \begin{align*}
 \sum_{n=0}^{\infty}M^{n}a_{n}y^{2n}&=\sum_{n=0}^{\infty}e_{n}(\Vec{r})\bigl(My^{2}\bigr)^{n}=\prod_{i=1}^{\infty}\bigl(1+r_{i}My^{2}\bigr)\\
 &=\exp\left(-\sum_{k=1}^{\infty}\frac{p_{k}(\Vec{r})(-M)^{k}y^{2k}}{k}\right)=\exp\left(-N\sum_{k=1}^{\infty}\frac{m'_{k}(-M)^{k}y^{2k}}{k}\right).
 \end{align*}
 This matches the first formula of~\eqref{eq_momenttocumulant} under the identification of parameters.
\end{proof}

\begin{Remark}
 After identifying the \smash{$\kappa_{1}^{N,M},\dots,\kappa_{N}^{N,M}$} with the first $N$ even $q$-$\gamma$ cumulants, one can define \smash{$\kappa_{l}^{N,M}$} for $l\ge N+1$ for rectangular finite convolution, by the moment-cumulant relation of $q$-$\gamma$ convolution under the same parameter identification in~\eqref{eq_parameterid}.
\end{Remark}
\begin{Remark}
 Note that in~\eqref{eq_parameterid}, both $N$ and $\gamma$ are positive, hence there is no choice of parameters that the finite rectangular cumulants coincide with $q$-$\gamma$ cumulants. Instead, one can combine the domain of these two groups of parameters, and treat the result as an extension of the moment-cumulant relation to, say, $\gamma\in \R_{>0}\cup \Z_{\le -1}$.
\end{Remark}

\appendix

\section{Limit transition of type BC Bessel functions}
In this appendix, we provide a limit transition of $B(\Vec{a},x_{1},\dots,x_{N};\theta,N,M)$ to a simple symmetric combination of exponents. This transition implies that in the $\theta=0$ regime, the rectangular addition \smash{$\Vec{a}\boxplus_{N,M}^{\theta}\Vec{b}$} becomes the usual convolution of the empirical measures \smash{$\frac{1}{N}\sum_{i=1}^{N}\delta_{a_{i}^{2}}$} and~\smash{$\frac{1}{N}\sum_{i=1}^{N}\delta_{b_{i}^{2}}$}.

\begin{prop}\label{prop:transitiontoexponent}
 Given $\Vec{a}=(a_{1}\ge a_{2}\ge \dots \ge a_{N})$, take $N$ to be fixed, $M\rightarrow \infty$, $\theta\rightarrow 0$, $M\theta \rightarrow \infty$, then
 \[
 B(\Vec{a},M\theta x_{1},\dots,M\theta x_{N};\theta, N,M)\longrightarrow \frac{1}{N!}\sum_{\sigma\in S_{N}}\prod_{i=1}^{N}{\rm e}^{a_{i}^{2}x_{\sigma(i)}^{2}}.
 \]
\end{prop}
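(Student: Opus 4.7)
The plan is to start from the explicit power series expansion (\ref{eq_expansion}) of $\B(\Vec{a};z_1,\ldots,z_M;\theta,N)$, substitute the rescaled arguments, analyze the asymptotics of each summand, and match the answer with the expansion of the right-hand side in monomial symmetric functions. Since $P_\mu(z_1^2,\ldots,z_M^2;\theta)$ is homogeneous of degree $|\mu|$ in the variables $z_i^2$, rescaling $z_i$ introduces an overall factor of $(N\theta)^{2|\mu|}$ into the $\mu$-summand, which will partly cancel against decaying gamma ratios.

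I would then extract the $\theta\to 0$, $N\theta\to\infty$ asymptotics of the three basic ingredients in each summand. The gamma ratio $\prod_{i=1}^M \Gamma(\theta N-\theta(i-1))/\Gamma(\theta N-\theta(i-1)+\mu_i)$ behaves like $(N\theta)^{-|\mu|}$ via $\Gamma(x+k)/\Gamma(x)\sim x^k$. The Jack polynomials $P_\mu(a_1^2,\ldots,a_M^2;\theta)$ and $P_\mu(z_1^2,\ldots,z_M^2;\theta)$ degenerate to the monomial symmetric functions $m_\mu(a_1^2,\ldots,a_M^2)$ and $m_\mu(z_1^2,\ldots,z_M^2)$ by (\ref{eq_lln4}). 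The normalization $P_\mu(1^M;\theta)=(M\theta)_\mu/H'(\mu)$ requires care: both $(M\theta)_\mu$ and $H'(\mu)$ contain a factor of $\theta^{l(\mu)}$, coming respectively from boxes in the first column and from boxes with arm length $a(s)=0$; a box-by-box count extracts the nonzero leading constant as an explicit product over arm and leg lengths, while $H(\mu)\to\prod_{s\in\mu}(a(s)+1)$.

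Combining these pieces, the $\mu$-summand converges to an expression of the form $\alpha_\mu\, m_\mu(a_1^2,\ldots,a_M^2)\, m_\mu(z_1^2,\ldots,z_M^2)$ with an explicit combinatorial coefficient $\alpha_\mu$ built out of arm/leg data of $\mu$ and $M$. A short elementary identity then identifies $\alpha_\mu$ with $(\prod_i \mathfrak{m}_i(\mu)!)/(M!\prod_j\mu_j!)$, where $\mathfrak{m}_i(\mu)$ denotes the multiplicity of the value $i$ among the parts of $\mu$. On the other side, expanding each exponential as a power series and exchanging the symmetrization over $\sigma\in S_M$ with the sum over multi-indices shows that $\frac{1}{M!}\sum_{\sigma\in S_M}\prod_i e^{a_i^2 z_{\sigma(i)}^2}=\sum_\mu \frac{\prod_i \mathfrak{m}_i(\mu)!}{M!\prod_j\mu_j!}\,m_\mu(a_1^2,\ldots,a_M^2)\,m_\mu(z_1^2,\ldots,z_M^2)$, completing the match termwise.

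The remaining task is to justify interchanging the limit with the infinite sum over partitions $\mu$. I would handle this using Proposition \ref{prop:besselbound}, which provides an explicit uniform upper bound on the series whose tail is controlled independently of $\theta$ and $N$ in the relevant asymptotic regime, so that dominated convergence applies. The main obstacle I expect is the combinatorial bookkeeping in the $\theta\to 0$ limit of $P_\mu(1^M;\theta)$, where cancellations of $\theta^{l(\mu)}$ between numerator and denominator must be tracked box-by-box using the distinction between arm-zero and arm-positive boxes; once this is done, the final identity matching $\alpha_\mu$ with the right-hand side coefficient reduces to a short symmetric function verification.
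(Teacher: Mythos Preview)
Your outline follows the same strategy as the paper: start from expansion (\ref{eq_expansion}), rescale, take termwise limits using $P_\mu\to m_\mu$ from (\ref{eq_lln4}) together with a box-by-box analysis of $H(\mu)$, $H'(\mu)$ and $(M\theta)_\mu$, and match the resulting coefficient with the monomial expansion of the symmetrized exponential. Your plan to justify the interchange of limit and sum via Proposition~\ref{prop:besselbound} is an addition the paper does not make.

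There is, however, a power-counting issue you gloss over. By your own bookkeeping, rescaling $z_i\mapsto N\theta\,z_i$ contributes $(N\theta)^{2|\mu|}$, the $N$-Gamma ratio contributes $(N\theta)^{-|\mu|}$, and the remaining pieces $2^{-2|\mu|}$, $H(\mu)$ and $P_\mu(1^M;\theta)$ all have finite nonzero limits as $\theta\to 0$ (the factors $\theta^{l(\mu)}$ in $(M\theta)_\mu$ and in $H'(\mu)$ cancel, exactly as you describe). This leaves a residual factor $(N\theta)^{|\mu|}4^{-|\mu|}$, which diverges in the regime $N\theta\to\infty$, so the termwise limit you assert cannot be finite as written. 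The paper's own displayed formula in Appendix~D carries only $(N\theta)^{|\mu|}$ and no $2^{-2|\mu|}$, which is what makes its computation close; this is consistent with a rescaling $z_i\mapsto 2\sqrt{N\theta}\,z_i$ (giving $(4N\theta)^{|\mu|}\cdot 2^{-2|\mu|}=(N\theta)^{|\mu|}$) rather than $z_i\mapsto N\theta\,z_i$, and suggests a typo in the normalization stated in the proposition. You should pin down the correct scaling before writing up; once that is fixed, the remainder of your argument matches the paper line for line.
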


\begin{proof}
 This follows from a straightforward calculation. Indeed, by Definition~\ref{def:bessel},
 \begin{gather*}
 B(\Vec{a},M\theta x_{1},\dots,M\theta x_{N};\theta, N,M)\\
 \qquad=\sum_{\mu}\prod_{j=1}^{l(\mu)}\frac{(M\theta)^{\mu_{j}}}{[\theta(M-j+1)]\cdots [\theta(M-j+1)+\mu_{j}-1]}\cdot \frac{\prod_{s\in\mu}[\mu_{i}-j+\theta (\mu_{j}'-i)+\theta]}{\prod_{s\in \mu}[N\theta+(j-1)-\theta(i-1)]}\\
 \phantom{\qquad=}{}\cdot\frac{1}{\prod_{s\in \mu}[\mu_{i}-j+1+\theta (\mu_{j}'-i)]}P_{\mu}\bigl(a_{1}^{2},\dots,a_{N}^{2};\theta\bigr)P_{\mu}\bigl(x_{1}^{2},\dots,x_{N}^{2};\theta\bigr).
 \end{gather*}
 When taking the limit in the above way,
 \[\prod_{j=1}^{l(\mu)}\frac{(M\theta)^{\mu_{j}}}{[\theta(M-j+1)]\cdots [\theta(M-j+1)+\mu_{j}-1]}\longrightarrow 1,\]
and
 \[
 \frac{1}{\prod_{s\in \mu}[\mu_{i}-j+1+\theta (\mu_{j}'-i)]}\longrightarrow \prod_{j=1}^{l(\mu)}\frac{1}{\mu_{j}!}.
 \]
 Also note that $\mu_{i}-j+\theta (\mu_{j}'-i)+\theta$ goes to 0 only if $\mu_{i}-j=0$, and $N\theta +(j-1)-\theta(i-1)$ goes to 0 only if $j=1$. These terms give
 \[
 \prod_{i\ge 1}k_{i}!\cdot \frac{(N-l(\mu))!}{N!}=\prod_{i \ge 0}k_{i}!\cdot \frac{1}{N!},
 \]
 where $k_{i}$ denotes the number of rows in $\mu$ of length $i$. And the remaining part of
 \[
 \frac{\prod_{s\in\mu}[\mu_{i}-j+\theta (\mu_{j}'-i)+\theta]}{\prod_{s\in \mu}[N\theta+(j-1)-\theta(i-1)]}
 \]
 converges to 1. Together with~\eqref{eq_lln4}, we have the limit is equal to
 \[
 \sum_{\mu}\frac{\prod_{i\ge 0}k_{i}!}{N!}\frac{1}{\prod_{j=1}^{l(\mu)}\mu_{j}!}m_{\mu}\bigl(a_{1}^{2},\dots,a_{N}^{2}\bigr)m_{\mu}\bigl(x_{1}^{2},\dots,x_{N}^{2}\bigr)
 \]
 which is the Taylor expansion of
 \[
 \frac{1}{N!}\sum_{\sigma\in S_{N}}\prod_{i=1}^{N}{\rm e}^{a_{i}^{2}x_{\sigma(i)}^{2}}.\tag*{\qed}
 \] \renewcommand{\qed}{}
\end{proof}

\subsection*{Acknowledgements} The author is grateful to Vadim Gorin for a lot of stimulating discussions, and all his useful suggestions on the presentation of this paper. We thank Grigori Olshanski for pointing out a~useful reference, Simon Marshall for explaining some basics of symmetric spaces, and Margit Roesler for clarification of a technical issue in her lecture notes. We are also indebted to Florent Benaych-Georges, Pierre Mergny, Michael Voit and Cesar Cuenca for their valuable comments. We are grateful to the anonymous referees for their valuable comments. The project was partially supported by NSF grants DMS-1949820 and DMS-2152588.

\pdfbookmark[1]{References}{ref}
\LastPageEnding

\end{document}